\def\curl{\operatorname{curl}}
\def\Div{\operatorname{Div}}
\def\div{\operatorname{div}}
\def\tan{\operatorname{tan}}
\def\claim#1{\begin{trivlist}\item[\hskip\labelsep\bf#1]\it}
\def\endclaim{\end{trivlist}}
\numberwithin{equation}{section}
\newtheorem{theorem}{Theorem}[section]
\newtheorem{lemma}[theorem]{Lemma}
\newtheorem{proposition}[theorem]{Proposition}
\newtheorem{property}{Property}
\theoremstyle{definition}
\newtheorem{remark}[theorem]{Remark}
\begin{document}
\allowdisplaybreaks
  \title{Reconstruction of interfaces using CGO solutions for the Maxwell equations }
%
\author{
Manas Kar\thanks{RICAM, Austrian Academy of Sciences,
Altenbergerstrasse 69, A-4040, Linz, Austria.
(Email:manas.kar@oeaw.ac.at)
\newline
Supported by the Austrian Science Fund (FWF): P22341-N18.} \qquad{ Mourad Sini}
\thanks{RICAM, Austrian Academy of Sciences,
Altenbergerstrasse 69, A-4040, Linz, Austria.
(Email:mourad.sini@oeaw.ac.at)
\newline
Partially supported by the Austrian Science Fund (FWF): P22341-N18.}
}
%
%
%
\maketitle
 
%

\begin{abstract} 
We deal with the problem of reconstructing interfaces using complex geometrical optics solutions for the Maxwell system.
The contributions are twofold.
 
\noindent First, we justify the enclosure method for the impenetrable obstacle case avoiding any assumption on the directions of the phases
of the CGO's (or the curvature of obstacle's surface). In addition, we need only a Lipschitz regularity of this surface. 
The analysis is based on some fine properties of the corresponding layer potentials in appropriate Sobolev spaces.

\noindent Second, we justify this method also for the penetrable case, where the interface is modeled by the jump (or the discontinuity)
of the magnetic permeability $\mu$. A key point of the analysis is the global $L^p$-estimates for the curl of the solutions of the Maxwell system with discontinuous
 coefficients. These estimates are justified here for $p$ near $2$ generalizing to the Maxwell's case the well known Meyers's $L^p$ estimates
of the gradient of the solution of scalar divergence form elliptic problems.
\end{abstract}
\begin{section}{\textbf{Introduction and statement of the results:}}
Let $\Omega$ $\subset$ ${\mathbb R}^3$ be a bounded domain with $C^1$-smooth boundary. Let D be a subset of $\Omega$ with  
Lipschitz boundary and the connected complement ${\mathbb R}^3\setminus
\overline{D}$.
 We are concerned with the electromagnetic wave propagation in an isotropic medium in ${\mathbb R}^3$ with
the electric permittivity $\epsilon > 0$ and the magnetic permeability $\mu > 0$.
 We assume 
 $\epsilon \in W^{1, \infty}(\Omega)$ such that $\epsilon=1$
in $\Omega \setminus{\overline{D}}$. We also assume $\mu(x) := 1 - \mu_D(x)\chi
_D(x)$ to be a measurable function, where $\mu_D \in L^{\infty}(D)$ and $\chi_D$ is the
characteristic function of $D$ such that $\vert\mu_D\vert \geq C>0$.
 If we denote by $E$, $H$ the electric and the magnetic fields respectively, then the first problem we are interested with is the impenetrable obstacle problem 
\begin{equation}\label{pene}
 \begin{cases}
 & \curl E-ikH=0  \ \ \text{in}\ \Omega\backslash\overline{D}, \\
 & \curl H+ikE=0  \ \ \text{in}\ \Omega\backslash\overline{D},\\
 & \nu\wedge E=f \ \ \text{on}\ \partial\Omega,\\
 & \nu\wedge H=0 \ \ \text{on}\ \partial{D},\\
\end{cases}
\end{equation}
and the second one is the penetrable obstacle problem
\begin{equation}\label{im_pene}
\begin{cases} 
& \curl E-ik \mu H=0 \ \ \text{in}\ \Omega,\\
& \curl H+ik \epsilon E=0 \ \ \text{in}\ \Omega,\\
& \nu\wedge E=f \ \ \text{on}\ \partial\Omega,
\end{cases}
\end{equation}
where $\nu$ is the unit outer normal vector on $\partial\Omega\cup\partial{D}$ and $k>0$ is the wave number.
Assume that $k$ is not an eigenvalue for the spectral problem corresponding to \eqref{pene} or \eqref{im_pene}. Then
both the problems \eqref{pene} and \eqref{im_pene} are  well posed in the spaces $H(\curl;\Omega\setminus \overline{D})$ and $H(\curl;\Omega)$ respectively,
see \cite{Monk} and \cite{Mitrea} for instance.
 \end{section}\\
\textbf{Impedance Map:}
We define the impedance map 
$\Lambda_D : {TH^{-\frac{1}{2}}(\partial\Omega)}\rightarrow{TH^{-\frac{1}{2}}(\partial\Omega)}$ for either the exterior or
the interior problems as follows:
\begin{center}
$\Lambda_D({\nu}\wedge {E|_{\partial\Omega}})=({\nu}\wedge {H|_{\partial\Omega}})$,
\end{center}
where $TH^{-\frac{1}{2}}(\partial \Omega) :=\{ f\in H^{-\frac{1}{2}}(\partial \Omega)/ \nu\cdot f= 0 \}.$
This impedance map is bounded. We denote by $\Lambda_\emptyset $ the impedance map for the domain without an obstacle.\\
\textbf{Construction of CGO solutions:}
In \cite{Zhou}, the complex geometrical optic solutions for the Maxwell's equation were constructed as follows.
 Let $\rho, {\rho}^{\bot} \in \mathbb{S}^2$ with $\rho\cdot{\rho}^{\bot}$= 0. 
Given $\theta, \eta \in \mathbb{C}^3$ of the form
\begin{equation}\label{form-eta}
\eta := \frac{1}{\vert \zeta\vert}(-(\zeta\cdot a)\zeta - k\zeta\wedge b + k^2a)\; \mbox{and}\; \theta := 
\frac{1}{\vert \zeta\vert} (k\zeta\wedge a - (\zeta\cdot b)\zeta + k^2b) 
\end{equation}
where $\zeta = -i \tau \rho + \sqrt{\tau^2 + k^2}\rho^{\perp}$ and $a\in\mathbb{R}^3$, $b\in\mathbb{C}^3$,
then for $\tau>0 $ large enough, there exists a unique (complex geometrical 
optic) solution $(E_0,H_0) \in H^1(\Omega)\times H^1(\Omega) $ of Maxwell's equations 
\begin{equation}\label{CGO-linear-Maxwell}
\begin{cases}
  & \curl {E_0}-ik{H_0}=0 \ \ \text{in}\ \Omega, \\
  & \curl {H_0}+ik{E_0}=0 \ \ \text{in}\ \Omega,
\end{cases}
\end{equation}
of the form
\begin{equation}\label{CGOpen}
\begin{cases} 
& E_0 = \eta e^{\{{\tau(x{\cdot}\rho)}+i\sqrt{{\tau}^2+ k^2}{x\cdot{\rho}^\perp}\}}, \\
& H_0 = \theta e^{\{{\tau(x{\cdot}\rho)}+i\sqrt{{\tau}^2+k^2}{x\cdot{\rho}^\perp}\}}.
\end{cases}
\end{equation}
In our work we use special cases of these CGO solutions. Precisely, in the impenetrable obstacle case, we choose $a$ and $b$ such that 
$a=\sqrt{2}\rho^{\perp},b\perp\rho$ and $b\perp\rho^{\perp}.$ 
Hence in this case, we have $\eta = \mathcal{O}(\tau)$ and $\theta = \mathcal{O}(1)$, for $\tau\gg1$.
For the penetrable obstacle case, we choose $a$ and $b$ such that $a\perp\rho,a\perp\rho^{\perp}$ and $b=\bar{\hat\zeta}$, where $\hat\zeta = \frac{\zeta}{|\zeta|}$. 
In this case, we have $\eta = \mathcal{O}(1)$ and $\theta = \mathcal{O}(\tau)$, for $\tau\gg1$.
Adding a parameter $t>0$ into the CGO-solutions, we set, using the same notations,
\begin{equation}
\begin{cases}\label{CGO-form}
& E_0  := \eta e^{\{{\tau(x{\cdot}\rho-t)}+i\sqrt{{\tau}^2+k^2}{x\cdot{\rho}^\perp}\}}, \\ 
& H_0  := \theta e^{\{{\tau(x{\cdot}\rho-t)}+i\sqrt{{\tau}^2+k^2}{x\cdot{\rho}^\perp}\}}.
\end{cases}
\end{equation}
\textbf{Indicator Function:}
For $\rho\in\mathbb{S}^2$, $\tau>0 $ and $t>0$ we define the indicator function 
\begin{equation*}
I_\rho(\tau,t):= ik\tau\int_{\partial\Omega}(\nu\wedge E_0)\cdot(\overline{(\Lambda_D-\Lambda_\emptyset)(\nu\wedge E_0)\wedge\nu})dS
\end{equation*}
where $E_0$ is the CGO solution of Maxwell's equations given above. \\
\textbf{Support Function:}
For $\rho \in\mathbb{S}^2,$ we define the support function of $D$ by 
$h_D(\rho):=\sup_{x\in D} x \cdot\rho.$\\
Now, we state our main result.
\begin{theorem}\label{theorem1}
Let $\rho \in \mathbb{S}^{2}$.
 For both the penetrable and the impenetrable cases, we have the following characterizations of $h_{D}(\rho)$.
\begin{align}
& \vert I_{\rho}(\tau,t)\vert \leq Ce^{-c\tau},\; \tau >>1, \; c, C>0, \mbox{ and in particular, }  \lim_{\tau \to \infty}\vert I_{\rho}(\tau,t)\vert = 0 \quad (t>h_D(\rho))\label{main_behavior_1},\\
& \liminf_{\tau \to \infty}|I_{\rho}(\tau, h_{D}(\rho))|>0, \; \label{main_behavior_2} \\
&\vert I_{\rho}(\tau,t) \vert \geq Ce^{c\tau}, \; \tau >>1,\; c, C>0
, \mbox{ and in particular, } \lim_{\tau \to \infty}|I_{\rho}(\tau,t)| = \infty \quad
(t<h_D(\rho)) \label{main_behavior_3}.
\end{align}
\end{theorem}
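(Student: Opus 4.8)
The plan is to convert the indicator function into an integral localized on the inhomogeneity and then read off its exponential behaviour in $\tau$ from the phase of the CGO solutions. Write $(E,H)$ for the solution of \eqref{im_pene} (resp.\ \eqref{pene}) with boundary datum $f=\nu\wedge E_0|_{\partial\Omega}$, and $(E_0,H_0)$ for the CGO pair \eqref{CGO-form}. Since $(E_0,H_0)$ also solves the free Maxwell system, one has $(\Lambda_D-\Lambda_\emptyset)(\nu\wedge E_0)=\nu\wedge(H-H_0)|_{\partial\Omega}$, and because $\nu\wedge E_0$ is tangential this gives $I_\rho(\tau,t)=ik\tau\int_{\partial\Omega}(\nu\wedge E_0)\cdot\overline{(H-H_0)}\,dS$. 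I would then apply the Maxwell--Green identity $\int_\Omega(\curl u\cdot v-u\cdot\curl v)\,dx=\int_{\partial\Omega}(\nu\wedge u)\cdot v\,dS$ together with $\curl E_0=ikH_0$, $\curl H=-ik\epsilon E$ and the matched trace $\nu\wedge(E-E_0)=0$, so that the boundary integral collapses to an integral over the region where the coefficients differ from the background. In the penetrable case this yields a leading term $ik\tau\int_D\mu_D\,H\cdot\overline{H_0}\,dx$, together with a contribution of the $W^{1,\infty}$ perturbation $\epsilon-1$ weighted by $E_0\cdot\overline{E_0}$; the choice $\eta=\mathcal O(1),\ \theta=\mathcal O(\tau)$ is precisely what makes the $\mu$-term dominant. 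In the impenetrable case the same manipulation over $\Omega\setminus\overline D$, where both pairs solve the free system, produces $\int_{\partial\Omega}=-\int_{\partial D}$ with no volume term, and using $\nu\wedge H=0$ on $\partial D$ reduces $I_\rho$ to a boundary integral over the Lipschitz surface $\partial D$ built from the CGO traces, with $E_0$ the dominant field.

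Next I would split $H=H_0+(H-H_0)$ and isolate the principal part. Inserting \eqref{CGO-form}, the principal term of the penetrable indicator is $ik\tau\int_D\mu_D\,H_0\cdot\overline{H_0}\,dx=ik\tau\int_D\mu_D|\theta|^2 e^{2\tau(x\cdot\rho-t)}\,dx$, comparable to $\tau^{3}\int_D\mu_D e^{2\tau(x\cdot\rho-t)}\,dx$ since $\theta=\mathcal O(\tau)$ (an analogous boundary expression governs the impenetrable case, where $\eta=\mathcal O(\tau)$). The behaviour is then read off from the weight $e^{2\tau(x\cdot\rho-t)}$ relative to $h_D(\rho)=\sup_{x\in D}x\cdot\rho$: if $t>h_D(\rho)$ the exponent is bounded above by $2\tau(h_D(\rho)-t)<0$ on $D$, giving the bound $Ce^{-c\tau}$ of \eqref{main_behavior_1}; if $t<h_D(\rho)$ the set $\{x\in D:\ x\cdot\rho>t\}$ has positive measure, on which the weight grows like $e^{c\tau}$, yielding \eqref{main_behavior_3}; and if $t=h_D(\rho)$ a Laplace-type lower bound for $\int_D e^{2\tau(x\cdot\rho-h_D(\rho))}\,dx$, valid under the Lipschitz (interior cone) condition on $\partial D$, combined with the $\tau$-prefactor, keeps $\liminf_{\tau\to\infty}|I_\rho(\tau,h_D(\rho))|>0$, which is \eqref{main_behavior_2}. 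For the two lower bounds I would need the principal integral not to cancel, which is guaranteed by the nondegeneracy $|\mu_D|\ge C>0$ together with a sign of $\mu_D$ near the contact set.

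The main obstacle is the control of the remainder $ik\tau\int_D\mu_D\,(H-H_0)\cdot\overline{H_0}\,dx$, which must be shown to be of lower order than the principal term. A plain energy estimate does not suffice: since $H-H_0$ solves a Maxwell system whose source is $\sim\mu_D H_0$ supported in $D$, its $L^2(D)$ norm is a priori of the same size as $H_0$, so Cauchy--Schwarz alone gives no gain. This is exactly where I would invoke the global $L^p$-estimate for the curl of solutions of the Maxwell system with the discontinuous coefficient $\mu$, for $p$ slightly larger than $2$ (the Meyers-type estimate): bounding $H-H_0$ in $L^p(D)$ and the weight $\overline{H_0}$ in the conjugate $L^{p'}(D)$, Hölder's inequality gains a genuine power of $\tau$ over the $L^2$ bound and renders the remainder subdominant. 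In the impenetrable case the corresponding remainder $(E-E_0,H-H_0)$ solves the free system in $\Omega\setminus\overline D$ with data $-\nu\wedge H_0$ on $\partial D$, and is handled through the mapping properties of the single- and double-layer Maxwell potentials on the Lipschitz surface $\partial D$ in the appropriate $TH^{s}(\partial D)$ spaces, avoiding any curvature hypothesis. Assembling the principal asymptotics with these remainder bounds then yields \eqref{main_behavior_1}--\eqref{main_behavior_3}; I expect the critical regime $t=h_D(\rho)$ and the sharpness of the $L^p$ gain to be the most delicate parts of the argument.
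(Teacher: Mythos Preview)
Your broad architecture---rewrite $I_\rho$ as an integral localized on $D$ (or $\partial D$), isolate a principal piece built from the CGO, and absorb the reflected solution as a remainder---matches the paper's. But the mechanism you propose for the remainder contains a genuine gap, in two linked places.

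First, the $L^p$ direction is reversed. You invoke a Meyers-type estimate for the Maxwell system with $p$ \emph{slightly larger than $2$}. The paper proves and uses the estimate only for $p\le 2$ (Theorem~\ref{5.6}: $p\in(\frac{2+\delta}{1+\delta},2]$), and in fact remarks explicitly that the case $p>2$ does not go through by the same argument and is left to future work. So the tool you plan to apply is not available here.

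Second, even granting an $L^p$ estimate with $p>2$, your H\"older argument on the cross term does \emph{not} gain a power of $\tau$. If $\|H-H_0\|_{L^p(D)}\le C\|H_0\|_{L^p(D)}$ (which is what a Meyers bound with source $\sim H_0\chi_D$ would give), then your remainder is controlled by $\|H_0\|_{L^p(D)}\|H_0\|_{L^{p'}(D)}$. For CGO weights $|H_0|\sim|\theta|\,e^{\tau(x\cdot\rho-t)}$, the integral $\int_D e^{q\tau(x\cdot\rho-h_D(\rho))}\,dx$ is a function of $q\tau$ alone, say $F(q\tau)$; hence $\|H_0\|_{L^p}\|H_0\|_{L^{p'}}=|\theta|^2F(p\tau)^{1/p}F(p'\tau)^{1/p'}$ and $\|H_0\|_{L^2}^2=|\theta|^2F(2\tau)$ have the \emph{same} leading order in $\tau$ whenever $F$ is regularly varying (e.g.\ the Lipschitz cone scaling $F(s)\sim s^{-3}$). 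So the cross-term bound is no smaller than the principal term, and the splitting $H=H_0+(H-H_0)$ by itself cannot close.

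The paper avoids this by a different reduction. Instead of a cross term, an algebraic completion of squares produces a one-sided inequality of the form
\[
\mp\tau^{-1}I_\rho(\tau,t)\ \ge\ c\int_D|\curl E_0|^2\,dx\ -\ C\|\tilde E\|_{L^2(\Omega)}^2\ -\ \text{lower order}
\]
(Lemma~\ref{lemma4.1}; analogously Lemma~\ref{lemma3.1} for the impenetrable case), so the remainder is a \emph{squared} $L^2$ norm of $\tilde E$ (resp.\ $\tilde H$), not a bilinear coupling. This squared norm is then controlled by the $L^p$ theory with $p<2$: one first shows $\|\tilde E\|_{H^{1,p}(\curl;\Omega)}\le C\|\curl E_0\|_{L^p(D)}$ by Theorem~\ref{5.6}, and then returns to $L^2$ via the Sobolev--Besov embedding $B^{p,2}_{1/p}(\Omega)\hookrightarrow L^2(\Omega)$ valid for $p>4/3$ (Proposition~\ref{pro1}; Proposition~\ref{proposition1} uses layer potentials in $X^{-1/p,p}_{\partial D}$ for the impenetrable case). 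The gain then comes from the elementary ratio
\[
\frac{\|\curl E_0\|_{L^p(D)}^2}{\|\curl E_0\|_{L^2(D)}^2}\ \le\ C\,\tau^{\,1-2/p}\ \longrightarrow\ 0\qquad (p<2),
\]
obtained by H\"older on the exponential weights (Lemma~\ref{neiii}). That is the step you are missing: the power of $\tau$ is won not by pairing $L^p$ with $L^{p'}$ across a cross term, but by comparing the $L^p$ and $L^2$ norms of the \emph{same} CGO factor, which is genuinely smaller for $p<2$.
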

From this theorem, we see that, for a fixed direction $\rho$, the behavior of the indicator
 function $I_{\rho}(\tau,t)$ changes drastically
in terms of $\tau$: exponentially decaying if $t>h_D(\rho)$, polynomially behaving if $t=h_D(\rho)$
and exponentially growing if $t<h_D(\rho)$. This feature can be used to reconstruct the support function 
$h_D(\rho),\; \rho \in \mathbb{S}^{2}$ from the data: $\Lambda _D (\nu \wedge E_0|_{\partial \Omega})$ with $E_0$ given by the CGO solutions. 
Hence, using Theorem \ref{theorem1}, we can reconstruct the convex hull of $D$. It is worth mentioning that using other CGO solutions, 
as it is proposed in \cite{Zhou}, we can reconstruct parts of the non-convex part of $D$. 

Before we discuss more Theorem \ref{theorem1}, let us recall that the idea of using CGO solutions for reconstructing interfaces goes back to 
\cite{MR1694840Ikehata_how} where the acoustic case has been considered, see also \cite{Ik-2002} and the references therein. We should also mention the works \cite{IISS}, \cite{NUW}, \cite{NY} and 
\cite{Y} where different CGOs were used for both the impenetrable and the penetrable obstacles in the acoustic case. Corresponding
results for the Lam{\'e} model with zero frequencies are given in \cite{Ik-It} and \cite{UW}.

Regarding the Maxwell case, Theorem \ref{theorem1} has been already proved for the impenetrable case in \cite{Zhou}. 
Our contribution in this paper is twofold.

The first contribution concerns the impenetrable case. We prove Theorem \ref{theorem1} with only Lipschitz regularity assumption on $\partial D$, 
but most importantly we impose no restriction on the directions $\rho \in \mathbb{S}^{2}$, while in \cite{Zhou} a countable set of 
such directions has to be avoided. This last restriction is  not natural and it is related to the geometrical assumption on the positivity of
lower bound of the curvature of the obstacle's surface. Such an assumption was already used in the previous works concerning the acoustic case, 
see \cite{MR1694840Ikehata_how} and \cite{NY} for instance. In \cite{SY}, see also \cite{Mourad1}, this assumption has been removed by proving a natural 
estimate of the so-called reflected solution. 
This estimate is obtained by using invertibility properties related to the layer potentials in the Sobolev spaces $H^s(\partial D)$, 
$s \in [-1, 1]$. 
In this current work, we generalize this technique to the Maxwell's model, in the corresponding spaces $X_{\partial D}^{-1/p,p}, p<2$, near $2$, see Section 2
for more details on these spaces, and we prove the needed estimate, see Proposition \ref{proposition1}, with
which  we can avoid the mentioned geometrical assumptions.

The second contribution of this paper is to justify Theorem \ref{theorem1} for the penetrable case. In this case also one needs an appropriate 
estimate of the corresponding reflected solution. In the acoustic case, see \cite{SY}, the analysis is based on the celebrated Meyers's 
$L^p$ estimate of the gradient of the solutions of scalar divergence form elliptic equations to provide a natural estimate of this reflected solution. 
Similar as in the impenetrable case, this estimate helps to avoid the apriori geometrical assumption of $\partial D$ used in the previous works, 
see for instance \cite{NUW}. Following this technique, we first prove a global $L^p$ estimate for the curl of the solutions of the Maxwell 
equations, for $p$ near 
$2$ and $p\leq 2$, in the spirit of Meyers's result, and then use it to provide the corresponding estimate which helps justifying 
Theorem \ref{theorem1} with no geometrical assumption and assuming minimum regularity on the interface $\partial D$ and the coefficient $\mu$.
We want to emphasize that this $L^p$ estimate is of importance for itself since it can be used for other purposes. A detailed discussion about this issue is
given in Section $4$. Let us mention here the $L^p$ regularity of the solution of the Maxwell's system, shown in 
\cite{BYZ}, see  also the references therein, where $\mu$ is taken as a constant and $\epsilon$ is piecewise constant. They derive an estimate of 
the magnetic field $H$ in the $W^{1, p}(\Omega)$-norm in terms of its $L^p(\Omega)$-norm and the data, for every $p$, $p\in (1, \infty)$. This means that the solution operator 
can be a bijection, however it is not necessary an isomorphism. The estimate we obtain here, where $\mu$ is taken in $L^{\infty}(\Omega)$ and 
$\epsilon$ constant, shows that this solution operator is an isomorphism on the spaces $H^{1, p}(\curl;\Omega)$ but for 
$p$ near $2$ and $p\leq 2$. 
This last property is important to our analysis of the enclosure method. Other regularity results can be found in \cite{Yin} 
where a global estimate in the
Campanato spaces are given and then a H{\"o}lder regularity estimate is shown. 

The paper is organized as follows. In Section $2$, we deal with the impenetrable case and in Section $3$, 
we consider the penetrable obstacle case. In Section $4$, we establish the global $L^p$ estimate for the curl of the solutions of 
the Maxwell's equations while in Section $5$, as an Appendix, we recall some important properties related to the Layer 
potentials and the Sobolev spaces appearing in the study of problems related to the Maxwell equations as well as 
the proof of some technical results used in Section $2$. 
\begin{section}{\textbf {{Proof of Theorem \ref{theorem1} for the impenetrable case}}}
We give the proof for the second point (\ref{main_behavior_2}), since it is the most
difficult part. The other points are easy to obtain by the identity $I_{\rho}(\tau,t) =
e^{2\tau(h_{D}(\rho)-t)} I_{\rho}(\tau,h_{D}(\rho))$ and (\ref{main_behavior_2}). In addition, the lower estimate in 
(\ref{main_behavior_2}) is the most difficult part since the upper bound is easy due to the well posed-ness of the forward problem.
 So, our focus is on the lower order estimate.
Let us recall the integration by parts formula from [\cite{Monk}, Theorem 3.29 and Theorem 3.31].
For any $v\in H(\curl;\Omega)$ and $\varphi\in(H^1(\Omega))^3$, the following Green's theorem holds.
\begin{equation}\label{intPART1}
\int_{\Omega}(\curl v)\cdot\varphi dx - \int_{\Omega}v\cdot(\curl\varphi)dx = \int_{\partial\Omega}(\nu\wedge v)\cdot\varphi ds(y). 
\end{equation}
In the other hand, for any $v\in H(\curl;\Omega)$ and $\varphi\in H(\curl;\Omega)$, we have
\begin{equation}\label{intPART2}
\int_{\Omega}(\curl v)\cdot\varphi dx - \int_{\Omega}v\cdot(\curl\varphi)dx = \int_{\partial\Omega}(\nu\wedge v)\cdot((\nu\wedge\varphi)\wedge\nu) ds(y). 
\end{equation}
We start by the following lemma.
\begin{lemma}\label{lemma3.1}
Assume $(E,H)\in{H(\curl;\Omega\setminus\overline{D})}\times H(\curl;\Omega\setminus\overline{D})$ is a solution of the problem
\begin{equation}\label{II}
\begin{cases}  
 &\curl E-ikH=0 \ \ \mbox{in}\ \Omega\setminus{\overline{D}},\\
  &\curl H+ikE=0 \ \ \text{in}\ \Omega\setminus{\overline{D}},\\
& \nu\wedge E=f \ \in{TH^{-\frac{1}{2}}(\partial\Omega)} \ \ \text{on}\ \partial\Omega,\\
& \nu\wedge H=0 \ \ \text{on}\ \partial{D},
\end{cases}
\end{equation}
with $f = {\nu}\wedge {{E_0}|_{\partial\Omega}}$. 
Then we have the identity,
\begin{equation*}
\begin{split}
-\frac{1}{\tau} I_{\rho}(\tau,t) 
& = -\int_{D}\{{\vert\curl E_0(x)\vert}^2-k^2{\vert Ẹ_0(x) \vert}^2\}dx - \int_{\Omega\setminus\overline{D}}\{{\vert\curl\tilde{E} (x)\vert}^2-{k^2}{\vert{\tilde{E}}(x)\vert}^2\}dx \\
& = \int_{D}\{|\curl H_0(x)|^2-k^2|H_0(x)|^2\}dx + \int_{\Omega\setminus\overline{D}}\{|\curl\tilde{H} (x)|^2-k^2|\tilde{H}(x)|^2\}dx
\end{split}
\end{equation*}
and then the inequality
\begin{equation}\label{3.2}
-\frac{1}{\tau} I_{\rho}(\tau,t) \geq \int_{D}\{|\curl H_0(x)|^2-k^2|H_0(x)|^2\}dx -k^2\int_{\Omega\setminus\overline{D}}|\tilde{H}(x)|^2dx,
\end{equation}
where $\tilde{E} := E - E_0 $ and $\tilde{H} := H - H_0 $.
\end{lemma}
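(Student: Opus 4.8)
The plan is to convert the boundary definition of $I_\rho$ into the two volume integrals by repeated use of the Green's formula \eqref{intPART2}, exploiting that both $(E_0,H_0)$ and $(E,H)$ are genuine Maxwell fields and that the tangential traces of $E_0$ and $E$ agree on $\partial\Omega$. First I would record what the impedance maps do to the CGO data: since $(E_0,H_0)$ solves \eqref{CGO-linear-Maxwell} in all of $\Omega$ with $\nu\wedge E_0|_{\partial\Omega}=f$, we have $\Lambda_\emptyset f=\nu\wedge H_0|_{\partial\Omega}$, while $\Lambda_D f=\nu\wedge H|_{\partial\Omega}$; hence $(\Lambda_D-\Lambda_\emptyset)f=\nu\wedge\tilde H|_{\partial\Omega}$ and
\[ I_\rho(\tau,t)=ik\tau\int_{\partial\Omega}(\nu\wedge E_0)\cdot\overline{(\nu\wedge\tilde H)\wedge\nu}\,dS=:ik\tau\,P, \]
so that it suffices to identify $-P$ with the two volume integrals, i.e. to prove $-\tfrac1\tau I_\rho=-ikP$ equals the claimed expression.

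Next I would turn the volume integrals into surface terms. Since the first–order system forces $\curl\curl E_0=k^2E_0$ in $D$ and $\curl\curl\tilde E=k^2\tilde E$ in $\Omega\setminus\overline D$, applying \eqref{intPART2} with $v=\curl E_0,\ \varphi=\overline{E_0}$ on $D$ and with $v=\curl\tilde E,\ \varphi=\overline{\tilde E}$ on $\Omega\setminus\overline D$ (note $\curl E_0=ikH_0\in H(\curl)$, similarly $\curl\tilde E=ik\tilde H$) rewrites each left–hand integral as a boundary integral. On $\partial\Omega$ the contribution vanishes because $\nu\wedge\tilde E=0$ there, so only $\partial D$ survives, where the outward normal of $\Omega\setminus\overline D$ is $-\nu$. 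This yields
\[ -\!\int_{D}(|\curl E_0|^2-k^2|E_0|^2)\,dx-\!\int_{\Omega\setminus\overline D}(|\curl\tilde E|^2-k^2|\tilde E|^2)\,dx=ik\!\int_{\partial D}\!\big[(\nu\wedge H_0)\cdot((\nu\wedge\overline{E_0})\wedge\nu)-(\nu\wedge\tilde H)\cdot((\nu\wedge\overline{\tilde E})\wedge\nu)\big]ds. \]

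Then I would simplify the $\partial D$ integrand and transport it to $\partial\Omega$. Using $\nu\wedge H=0$ on $\partial D$, hence $\nu\wedge\tilde H=-\nu\wedge H_0$, and expanding $\tilde E=E-E_0$, the bracket collapses to $(\nu\wedge H_0)\cdot((\nu\wedge\overline E)\wedge\nu)$. To move this to $\partial\Omega$ I would use the Maxwell reciprocity obtained by applying \eqref{intPART2} on $\Omega\setminus\overline D$ to the pairs $(E_0,\overline H)$ and $(H_0,\overline E)$: both give the same volume term $ik\int(H_0\cdot\overline H-E_0\cdot\overline E)$, so the boundary terms coincide,
\[ \int_{\partial(\Omega\setminus\overline D)}(n\wedge H_0)\cdot((n\wedge\overline E)\wedge n)\,ds=\int_{\partial(\Omega\setminus\overline D)}(n\wedge E_0)\cdot((n\wedge\overline H)\wedge n)\,ds. \]
Splitting into $\partial\Omega$ and $\partial D$, and using $\nu\wedge\overline H=0$ on $\partial D$ together with $\nu\wedge\overline E=\nu\wedge\overline{E_0}$ and $\overline H=\overline{H_0}+\overline{\tilde H}$ on $\partial\Omega$, the $\partial D$ integral becomes $\int_{\partial\Omega}(\nu\wedge H_0)\cdot((\nu\wedge\overline{E_0})\wedge\nu)\,dS-\int_{\partial\Omega}(\nu\wedge E_0)\cdot((\nu\wedge\overline{H_0})\wedge\nu)\,dS-P$. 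The same reciprocity applied to $(E_0,H_0)$ on all of $\Omega$ gives
\[ \int_{\partial\Omega}(\nu\wedge E_0)\cdot((\nu\wedge\overline{H_0})\wedge\nu)\,dS=\int_{\partial\Omega}(\nu\wedge H_0)\cdot((\nu\wedge\overline{E_0})\wedge\nu)\,dS, \]
which cancels the pure CGO cross term and leaves exactly $-P$, completing the first equality.

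The second equality is then immediate from the Maxwell relations $|\curl E_0|^2=k^2|H_0|^2$, $|\curl H_0|^2=k^2|E_0|^2$ and their analogues for $\tilde E,\tilde H$, which flip the sign of each integrand while exchanging the roles of $E$ and $H$; and the inequality \eqref{3.2} follows by discarding the nonnegative term $\int_{\Omega\setminus\overline D}|\curl\tilde H|^2\,dx\ge 0$. I expect the main obstacle to be purely the bookkeeping: tracking the orientation of $\nu$ on $\partial D$ viewed as part of $\partial(\Omega\setminus\overline D)$, handling the antisymmetry $(\nu\wedge u)\cdot((\nu\wedge v)\wedge\nu)=-(\nu\wedge v)\cdot((\nu\wedge u)\wedge\nu)$ of the tangential pairing, and reading every surface integral as the correct duality pairing in the trace spaces, since $\partial D$ is only Lipschitz so the traces live in $TH^{-1/2}$-type spaces rather than $L^2$.
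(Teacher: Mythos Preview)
Your argument is correct and is precisely the integration-by-parts computation the paper alludes to: the paper's own proof just says ``It is based on integration by parts, see Lemma 4.4 of \cite{Zhou} for details, keeping in mind that we use here the integration by parts formula \eqref{intPART2},'' and your proposal supplies exactly those details---reducing the volume integrals to $\partial D$, collapsing the integrand via $\nu\wedge H=0$, and pushing the result back to $\partial\Omega$ by the Maxwell reciprocity for the pairs $(E_0,\overline H)$, $(H_0,\overline E)$. The second identity and the inequality then follow, as you say, from $|\curl E_0|^2=k^2|H_0|^2$, etc., and from dropping $\int_{\Omega\setminus\overline D}|\curl\tilde H|^2\ge 0$.
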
            
 \begin{proof}
 It is based on integration by parts, see Lemma 4.4 of \cite{Zhou} for details, keeping in mind that we use here the integration by parts formula 
\eqref{intPART2} since $E,H\in H(\curl;\Omega\setminus\overline{D})$.       
\end{proof}
Using \eqref{3.2}, we remark that it is enough to dominate the lower order term $\int_{\Omega\setminus\overline{D}}|\tilde{H}(x)|^2dx $
by the terms involving only $H_0$. Then from the explicit form of $H_0$, we deduce Theorem \ref{theorem1}.
This is the object of the next subsection. For this, we need the following extra condition on the wave number $k$. Namely, we assume that $k$
is not a Maxwell eigenvalue\footnote{This condition is needed because, for simplicity, we used the single layer potential representation \eqref{layer}.
It can be removed if we use combined single and double layer representations.} in $D$, i.e. if $E, H\in H(\curl;D)$ satisfy
\begin{equation*}
\begin{cases}  
 &\curl E-ikH=0 \ \ \mbox{in}\ D,\\
  &\curl H+ikE=0 \ \ \text{in}\ D,\\
& \nu\wedge E=0 \ \ \text{on}\ \partial{D},
\end{cases}
\end{equation*}
then $E=H=0$ in $D$.
\begin{subsection}{\textbf {Estimates of the lower order term $\tilde{H}$}} 
The aim is to prove the following estimate.
\end{subsection}
\begin{proposition}\label{proposition1}
Let $\Omega$ be $C^1$-smooth and D, $D\subset\Omega$, be Lipschitz. Then, there exists a positive constant C independent on
$(\tilde{E},\tilde{H})$ and $(E_0,H_0)$ such that
\begin{equation}\label{3.3}
\int_{\Omega\setminus\overline{D}}|\tilde H(x)|^2dx
  \leq C \{\|\curl H_0\|_{L^p(D)}^{2} + \|H_0\|_{H^{s+1/2}(D)}^{2}\},
 \end{equation}
for all $p$ and $s$ such that $\max\{2-\delta,4/3\}< p \leq 2$ and $0<s\leq1$\footnote{To extend this result to $s=0$, we need the trace theorem between $H^{1/2}(D)$ and $L^2(\partial D)$.
However, this trace theorem is not necessarily valid for Lipschitz domains, see for instance [\cite{Mclean}, p 209].} with $\delta>0$. 
\end{proposition}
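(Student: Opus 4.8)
The plan is to realize the reflected field $(\tilde E,\tilde H)=(E-E_0,H-H_0)$ as the solution of a boundary value problem driven by a single tangential datum on $\partial D$, to represent it through electromagnetic single layer potentials, and to control the resulting density in the scale $X_{\partial D}^{-1/p,p}$. Subtracting \eqref{CGO-linear-Maxwell} from \eqref{II} one checks that $(\tilde E,\tilde H)$ solves the homogeneous Maxwell system $\curl\tilde E-ik\tilde H=0$, $\curl\tilde H+ik\tilde E=0$ in $\Omega\setminus\overline D$, with $\nu\wedge\tilde E=0$ on $\partial\Omega$ (since $\nu\wedge E=\nu\wedge E_0=f$ there) and $\nu\wedge\tilde H=-\nu\wedge H_0=:g$ on $\partial D$ (since $\nu\wedge H=0$ there). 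Thus the entire problem is forced by $g$, and the goal \eqref{3.3} amounts to bounding $\|\tilde H\|_{L^2(\Omega\setminus\overline D)}$ by a suitable norm of $g$.

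First I would represent $(\tilde E,\tilde H)$ by a single layer potential on $\partial D$ with an unknown tangential density $\phi$, so that the Maxwell system holds off $\partial D$; the outer condition on $\partial\Omega$ is accommodated by the well posedness of the forward problem and contributes only lower order terms, harmless because $\partial\Omega$ lies at a positive distance from $\partial D$. Imposing the obstacle condition $\nu\wedge\tilde H=g$ then yields a boundary integral equation $\mathcal A\phi=g$ on $\partial D$. The assumption that $k$ is not a Maxwell eigenvalue in $D$ forces $\mathcal A$ to be injective, and the Lipschitz layer potential theory collected in Section 5 gives its invertibility on the $L^2$-based space $X_{\partial D}^{-1/2,2}$. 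The core step is then to propagate this invertibility to $X_{\partial D}^{-1/p,p}$ for $p$ in a left neighbourhood of $2$, by a stability and interpolation argument, which is precisely where the constraint $\max\{2-\delta,4/3\}<p\le 2$ is produced.

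Granting the invertibility, the estimate assembles in three moves. The mapping properties of the single layer potential give $\|\tilde H\|_{L^2(\Omega\setminus\overline D)}\le C\|\phi\|_{X_{\partial D}^{-1/p,p}}$, where the lower bound $p>4/3$ is what guarantees, through the three dimensional Sobolev embedding, that the layer potential of an $X_{\partial D}^{-1/p,p}$ density lands in $L^2$ over the solid region. Inverting $\mathcal A$ gives $\|\phi\|_{X_{\partial D}^{-1/p,p}}\le C\|g\|_{X_{\partial D}^{-1/p,p}}$. Finally, $g=-\nu\wedge H_0$ must be bounded in this tangential trace space by volume norms of $H_0$ on $D$: its surface divergence is controlled through the identity $\Div_{\partial D}(\nu\wedge H_0)=-\nu\cdot\curl H_0$, the normal trace of the divergence free field $\curl H_0$, which accounts for the term $\|\curl H_0\|_{L^p(D)}$; its tangential part is controlled by the trace theorem from $H^{s+1/2}(D)$, valid on Lipschitz domains for $s>0$, which accounts for $\|H_0\|_{H^{s+1/2}(D)}$ and explains the failure at $s=0$ noted in the footnote. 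Chaining the three inequalities gives \eqref{3.3}.

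The genuine obstacle is the invertibility of $\mathcal A$ on $X_{\partial D}^{-1/p,p}$ for $p$ slightly below $2$ on a merely Lipschitz boundary. On smooth $\partial D$ one would have the whole range $p\in(1,\infty)$ from the pseudodifferential calculus, but on Lipschitz domains only the $L^2$ theory, via Rellich type identities, is available directly, and it propagates to an open interval around $p=2$ only; keeping the exponents compatible so that the interior $L^2$ bound survives, hence the $4/3$ threshold, is the delicate point. The reduction to the boundary equation, the mapping bounds and the trace estimate for $g$ are then routine given the material of Section 5.
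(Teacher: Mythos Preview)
Your outline is essentially the paper's argument, and you have correctly identified every ingredient: the single layer representation, the invertibility of the boundary integral operator on $X_{\partial D}^{-1/p,p}$ for $p$ near $2$, the Besov embedding producing the threshold $p>4/3$, the identity $\Div_{\partial D}(\nu\wedge H_0)=-\nu\cdot\curl H_0$, and the trace theorem from $H^{s+1/2}(D)$ for $s>0$.

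One point needs sharpening. You cannot represent $(\tilde E,\tilde H)$ itself as a single layer potential on $\partial D$ and then ``accommodate'' the condition $\nu\wedge\tilde E=0$ on $\partial\Omega$ after the fact: a layer potential off $\partial D$ alone will not produce vanishing tangential trace on $\partial\Omega$, so there is no single density $\phi$ solving both boundary conditions. The paper handles this by an explicit two--step split. First one introduces the \emph{exterior} scattering field $(E^{ex},H^{ex})$ in $\mathbb R^3\setminus\overline D$ with $\nu\wedge H^{ex}=-\nu\wedge H_0$ on $\partial D$ and the Silver--M\"uller radiation condition; it is this auxiliary field that is written as $H^{ex}=\curl\,\mathcal S_k\phi$, yielding the boundary equation $\bigl(-\tfrac12 I+M_k\bigr)\phi=-\nu\wedge H_0$. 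Second, the remainder $(\mathcal E,\mathcal H):=(\tilde E-E^{ex},\tilde H-H^{ex})$ solves the Maxwell problem in $\Omega\setminus\overline D$ with $\nu\wedge\mathcal H=0$ on $\partial D$ and $\nu\wedge\mathcal E=-\nu\wedge E^{ex}$ on $\partial\Omega$, and is controlled in $L^2$ by the standard well--posedness. Since $\partial\Omega$ lies at positive distance from $\partial D$, the datum $\nu\wedge E^{ex}$ on $\partial\Omega$ is smooth in $\phi$ and bounded by $\|\phi\|_{L^p(\partial D)}$, so this second piece is indeed the ``lower order'' contribution you anticipated---but it has to be separated out explicitly before the layer potential ansatz makes sense. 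Finally, the invertibility of $-\tfrac12 I+M_k$ on $X_{\partial D}^{-1/p,p}$ for $|p-2|\le\delta$ is quoted directly from Mitrea--Mitrea--Pipher (their Theorem~5.3, stated here as Lemma~\ref{lemma3.4}); no fresh stability or interpolation argument is required.
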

\begin{proof}
\textbf{Step 1.}
 Let $E^{ex}, H^{ex}$ be the solution of the following well posed exterior problem, see \cite{Mitrea, Monk}.
\begin{equation}\label{exterior}
 \begin{cases}
 & \curl E^{ex}-ikH^{ex}=0  \ \ \text{in}\ \mathbb{R}^3\setminus\overline{D}, \\
 & \curl H^{ex} + ikE^{ex}=0  \ \ \text{in}\ \mathbb{R}^3\setminus\overline{D},\\
 & \nu\wedge H^{ex}=-\nu\wedge H_0 \ \ \text{on}\ \partial D,\\
 & E^{ex}, H^{ex} \ \text{satisfy the Silver-M{\"u}ller radiation condition.}
\end{cases}
\end{equation}
We represent these solutions $E^{ex}$ and $H^{ex}$ by the following layer potentials
\begin{equation}\label{layer}
 \begin{split}
 & H^{ex}(x) := \curl \int_{\partial D}\Phi_k(x,y)f(y)ds(y),\\
 & E^{ex}(x) := -\frac{1}{ik}\curl H^{ex}(x), \ \ x\in\mathbb{R}^3\setminus\partial D,
 \end{split}
\end{equation}
where $\Phi_k(x,y) := -\frac{e^{ik|x-y|}}{4\pi|x-y|},\ x,y\in\mathbb{R}^3, \ x\neq y,$ is the fundamental solution
of the Helmholtz equation and $f$ is the density. Note that
\eqref{layer} satisfy the first two equations and the radiation condition of \eqref{exterior}. By using the jump formula of the 
$\curl$ of the single layer potential on $\partial D$ with $X_{\partial D}^{-1/p,p}$ densities, see \cite{Mitrea}, 
where the space $X_{\partial D}^{-1/p,p}\subset L_{\tan}^{p}(\partial D)$ is defined as 
\begin{equation*}
X_{\partial D}^{-1/p,p}: = \{A\in L_{\tan}^{p}(\partial D);  {\Div A \in{W^{-1/p,p}(\partial{D})}}\}
\end{equation*} 
with the norm $\|A\|_{X_{\partial D}^{-1/p,p}} := \|A\|_{L^p(\partial D)} + \|\Div A\|_{W^{-1/p,p}(\partial{D})}$,
we obtain
\begin{equation}\label{jump}
 \nu\wedge H^{ex} = (-\frac{1}{2}I + M_k)f,
\end{equation}
where $M_k$ is defined as
\[
 (M_kf)(x) := \nu\wedge p.v. \curl \int_{\partial D}\Phi_k(x,y)f(y)ds(y), \ \ x\in\partial D.
\]
Hence $f$ is solution of the equation
\begin{equation}\label{buonjump}
 (-\frac{1}{2}I + M_k)f = -\nu\wedge H_0.
\end{equation}
We need the following lemma for our analysis.
\begin{lemma}[Theorem 5.3 of \cite{Mitrea}]\label{lemma3.4}
Let D be a bounded Lipschitz domain in $\mathbb {R}^3$ with ${\mathbb{R}^3}\setminus \overline{D}$ is connected. There exists $\delta$
positive and depending only on $\partial D$ such that, if $k \in{\mathbb{C}\setminus{\{0\}}},$ $Im\ k \geq 0,$ is not a Maxwell eigenvalue
for D, then the following operator is isomorphism
 \begin{equation*}
  (-{\frac{1}{2}}I + M_k) : X_{\partial D}^{-1/p,p}\longrightarrow X_{\partial D}^{-1/p,p}
 \end{equation*}
for each ${2-\delta}\leq p\leq{2+\delta}$.
\end{lemma}
Let us recall the Sobolev-Besov space $B_{\frac{1}{p}}^{p,2}(\Omega\setminus\overline{D})= [L^p(\Omega\setminus\overline{D}),W^{1,p}(\Omega\setminus\overline{D})]_{\frac{1}{p},2}$, 
see  Appendix A for a general setting and \cite{Mitrea, T.Muramatu} for more details. The embedding
$i : B_{1/p}^{p,2}(\Omega\setminus\overline{D}) \rightarrow L^2(\Omega\setminus\overline{D})$ is bounded for $4/3<p\leq2$, see for instance [\cite{T.Muramatu}, Theorem 2]. 
Using this embedding and Property \ref{pr7} in Appendix, we obtain
\begin{align}
 \|H^{ex}\|_{L^2(\Omega\setminus\overline{D})} \nonumber
& \leq C\|H^{ex}\|_{B_{1/p}^{p,2}(\Omega\setminus\overline{D})} \nonumber \\
& \leq C\{\|\nu\wedge H^{ex}\|_{L^p(\partial\Omega)} + \|\nu\wedge H^{ex}\|_{L^p(\partial D)} + \|H^{ex}\|_{L^p(\Omega\setminus\overline{D})} + \|E^{ex}\|_{L^p(\Omega\setminus\overline{D})}\}. \label{misti}
\end{align}
We denote the single layer potential by $\mathcal{S}_k$  
\[
 \mathcal{S}_kf(x) := \int_{\partial D}\Phi_k(x,y)f(y)ds(y), \ \ x\in\mathbb{R}^3\setminus\partial D.
\]
The operator $\mathcal{S}_k : W^{-1/p,p}(\partial D) \rightarrow W^{1,p}(\Omega\setminus\overline{D})$ 
is bounded, see Property \ref{pr5} in Appendix. Hence 
\begin{equation}\label{sadhona}
 \|H^{ex}\|_{L^p(\Omega\setminus\overline{D})}
= \|\curl\mathcal{S}_kf\|_{L^p(\Omega\setminus\overline{D})}
\leq C \|f\|_{W^{-1/p,p}(\partial D)} \leq C \|f\|_{L^p(\partial D)}.
\end{equation}
Now using the identity $\curl\curl\mathcal{S}_kf = \nabla\div\mathcal{S}_kf - \Delta\mathcal{S}_kf 
= \nabla\mathcal{S}_k(\Div f) + k^2\mathcal{S}_kf$
and the above properties of the single layer potential, we obtain
\begin{align}
 \|E^{ex}\|_{L^p(\Omega\setminus\overline{D})} \nonumber
& \leq C \|\curl\curl\mathcal{S}_kf\|_{L^p(\Omega\setminus\overline{D})} \nonumber \\
& \leq C [\|\nabla\mathcal{S}_k(\Div f)\|_{L^p(\Omega\setminus\overline{D})} + \|\mathcal{S}_kf\|_{L^p(\Omega\setminus\overline{D})}] \nonumber \\
& \leq C [\|\Div f\|_{W^{-1/p,p}(\partial D)} + \|f\|_{W^{-1/p,p}(\partial D)}] \nonumber \\
& \leq C [\|\Div f\|_{W^{-1/p,p}(\partial D)} + \|f\|_{L^p(\partial D)}]. \label{bujhi}
\end{align}
Also, since $\overline{D}\subset\subset\Omega$ we have
\begin{align}
 \|\nu\wedge H^{ex}\|_{L^p(\partial\Omega)} \nonumber 
& \leq C [\int_{\partial\Omega}(\int_{\partial D}\curl_x\Phi_k(x,y)f(y)ds(y))^pds(x)]^{1/p} \nonumber \\
& \leq C \|f\|_{L^p(\partial D)}. \label{europe}
\end{align}
Combining the estimates \eqref{misti}, \eqref{sadhona}, \eqref{bujhi}, \eqref{europe} and the fact that $\nu\wedge H^{ex} = -\nu\wedge H_0$ on $\partial D$,
we have
\begin{equation}\label{exTR}
 \|H^{ex}\|_{L^2(\Omega\setminus\overline{D})}
\leq C\|\nu\wedge H_0\|_{L^p(\partial D)} + C[\|f\|_{L^p(\partial D)} + \|\Div f\|_{W^{-1/p,p}(\partial D)}].
\end{equation}
Using the invertibility of the operator $-\frac{1}{2}I + M_k$, see Lemma \ref{lemma3.4}, from the equation \eqref{buonjump},
we obtain
\begin{equation}\label{samsung}
 \|f\|_{L^p(\partial D)} + \|\Div f\|_{W^{-1/p,p}(\partial D)} 
\leq C [\|\nu\wedge H_0\|_{L^p(\partial D)} + \|\Div(\nu\wedge H_0)\|_{W^{-1/p,p}(\partial D)}].
\end{equation}
Hence from Property \ref{pr4} and Property \ref{pr3} of Theorem 5.1 in Appendix together with the estimates \eqref{exTR} and \eqref{samsung}
we obtain
\begin{equation}\label{exteriIIIo}
 \|H^{ex}\|_{L^2(\Omega\setminus\overline{D})} \leq C[\|\nu\wedge H_0\|_{L^p(\partial D)} + \|\curl H_0\|_{L^p(D)}], \ \ 4/3<p\leq 2.
\end{equation}
\textbf{Step 2.}
Define, $\mathcal{E} := \tilde E - E^{ex}$ and $\mathcal{H} := \tilde H - H^{ex}$, then $\mathcal{E}$ and $\mathcal{H}$ satisfy the following Maxwell problem
\begin{equation}\label{mcal}
 \begin{cases}
 & \curl \mathcal{E}-ik\mathcal{H}=0  \ \ \text{in}\ \Omega\setminus\overline{D}, \\
 & \curl \mathcal{H} + ik\mathcal{E}=0  \ \ \text{in}\ \Omega\setminus\overline{D},\\
 & \nu\wedge \mathcal{H}= 0 \ \ \text{on}\ \partial D,\\
 & \nu\wedge \mathcal{E}= -\nu\wedge E^{ex} \ \ \text{on}\ \partial\Omega.\\
\end{cases}
\end{equation}
Applying the $L^2$-theory for the Maxwell system, we obtain
 \begin{align}
  \|\mathcal{H}\|_{L^2(\Omega\setminus\overline{D})}
\leq \|\mathcal{E}\|_{H(\curl;\Omega\setminus\overline{D})} 
 \leq C \|\nu\wedge\mathcal{E}\|_{H^{-1/2}(\partial\Omega)}  
 \leq C \|\nu\wedge E^{ex}\|_{H^{-1/2}(\partial\Omega)}. \label{2ndMaxmm}
\end{align}
For $x\in\partial\Omega$ and $y\in\partial D$, the fundamental solution $\Phi_k(x,y)$ is a smooth function. Therefore from \eqref{layer}, we have
\begin{align}
|_{H^{-1/2}(\partial\Omega)}\langle\nu\wedge E^{ex},\varphi\rangle_{H^{1/2}(\partial\Omega)}|
& = |\int_{\partial\Omega}(\nu(x)\wedge E^{ex}(x))\varphi(x)ds(x)|\nonumber \\
& = |\int_{\partial\Omega}\int_{\partial D}(\nu(x)\wedge\curl_x\curl_x\Phi_k(x,y))f(y)\varphi(x)ds(x)ds(y)|\nonumber \\
&\leq \int_{\partial\Omega}\int_{\partial D}|\nu(x)\wedge\curl_x\curl_x\Phi_k(x,y)||f(y)||\varphi(x)|ds(x)ds(y) \nonumber \\
&\leq C \left(\int_{\partial D}|f(y)|ds(y)\right)\left(\int_{\partial\Omega}|\varphi(x)|ds(x)\right) \nonumber \\
& \leq C \|f\|_{L^p(\partial D)}\|\varphi\|_{H^{1/2}(\partial\Omega)}. \nonumber
\end{align}
Taking the supremum over all $\varphi$ with $\|\varphi\|_{H^{1/2}(\partial\Omega)}\leq 1$ on the above estimate, we get
\begin{equation}\label{extof}
\|\nu\wedge E^{ex}\|_{H^{-1/2}(\partial\Omega)} \leq C \|f\|_{L^p(\partial D)}, \ \ \forall p\geq1 .
\end{equation}
From \eqref{2ndMaxmm} and \eqref{extof} together with Lemma \ref{lemma3.4} and \eqref{buonjump}, we obtain 
\begin{align}
\|\mathcal{H}\|_{L^2(\Omega\setminus\overline{D})} 
& \leq C [\|f\|_{L^p(\partial D)} + \|\Div f\|_{W^{-1/p,p}(\partial D)}]  \nonumber \\
& \leq C [\|\nu\wedge H_0\|_{L^p(\partial D)} + \|\Div(\nu\wedge H_0)\|_{W^{-1/p,p}(\partial D)}] \nonumber \\ 
& \leq C [\|\nu\wedge H_0\|_{L^p(\partial D)} + \|\curl H_0\|_{L^p(D)}], \ \ 2-\delta\leq p\leq2+\delta. \label{maxcall}
 \end{align}
Combining \eqref{exteriIIIo} and \eqref{maxcall}, we obtain
\begin{equation}\label{defenCCE}
 \begin{split}
  \int_{\Omega\setminus\overline{D}}|\tilde H(x)|^2dx
& \leq \|\mathcal{H}\|_{L^2(\Omega\setminus\overline{D})}^{2} + \|H^{ex}\|_{L^2(\Omega\setminus\overline{D})}^{2} \\
& \leq C [\|\nu\wedge H_0\|_{L^p(\partial D)}^{2} + \|\curl H_0\|_{L^p(D)}^{2}], 
 \end{split}
\end{equation}
for all $\max\{2-\delta,4/3\}< p\leq2.$ 
As, for $s>0$ and $p\leq2$ we have $H^s(\partial D) \subset L^2(\partial D)\subset L^p(\partial D)$, then we deduce that 
\[
 \|\nu\wedge H_0\|_{L^p(\partial D)} \leq C \|H_0\|_{L^p(\partial D)} \leq C \|H_0\|_{H^s(\partial D)}.
\]
Note that the trace map $\gamma : H^{s+1/2}(D) \rightarrow H^s(\partial D)$, $0<s\leq 1$\footnote{This is the place where we used the trace theorem for Lipschitz domain and 
we need to avoid $s=0$.}, defined by $\gamma(u) = u|_{\partial D}$, 
is bounded. So the estimate \eqref{defenCCE} becomes
\begin{equation}
 \int_{\Omega\setminus\overline{D}}|\tilde H(x)|^2dx \leq C [\|H_0\|_{H^{s+1/2}(D)}^{2} + \|\curl H_0\|_{L^p(D)}^{2}],
\end{equation}
for all $\max\{2-\delta,4/3\}< p\leq2 $ with $\delta>0$ and $0<s\leq1.$ 
\end{proof}
\begin{subsection}{\textbf{Proof of Theorem \ref{theorem1}}}
 \end{subsection}     
      Here, we use the same notations as in the previous works \cite{MR1694840Ikehata_how}, \cite{NUW} and \cite{NY} for instance. Let us first introduce the sets $D_{j,\delta}\subset D, D_{\delta}\subset D$ as follows. 
For any $\alpha \in {\partial D \cap\{x\cdot\rho = h_{D}(\rho)\}} =: K,$
we define $ B(\alpha,\delta) := \{x\in \mathbb{R}^3 ; | x- \alpha | < \delta\} (\delta > 0).$
Then, $K \subset \cup_{\alpha\in K}B(\alpha ,\delta).$ Since K is compact, 
there exist $\alpha_1,\cdots,\alpha_n$ such that $ K \subset {B(\alpha_{1},\delta)\cup \cdots \cup B(\alpha_{n},\delta)}$. 
Then we define 
$
 D_{j,\delta}:= D \cap B(\alpha_{j},\delta), D_{\delta}:= \cup_{j=1}^{n}D_{j,\delta}.
$
Also
\[
 \int_{D \setminus D_ \delta} {e^{-p \tau (h_D(\rho)-x \cdot \rho)}} dx = O(e^{-pc \tau}) \ \ (\tau \rightarrow \infty ),
\]
with positive constant $c$.
Let $\alpha_j \in K. $ By a rotation and translation, we may assume that $\alpha_j = 0$ and the vector $\alpha_j - x_0 = 0$ is
parallel to $e_3 = (0,0,1).$ Then, we consider a change of coordinate near $\alpha_j :$
\begin{equation}\label{3.27}
 y' = x', y_3 = h_D(\rho) - x \cdot \rho,
\end{equation}
where $x' = (x_1,x_2), y' = (y_1,y_2), x = (x',x_3), y= (y',y_3).$ Denote the parametrization of $\partial D$ near $\alpha_j$
by $l_j(y').$
\begin{lemma}\label{lemma3.9}
For $ 1 \leq q < \infty $, The following estimates hold. 
\bigskip

\noindent 1.  
\begin{equation}\label{3.23}
 \int_D {| {H_0(x)} |}^qdx \leq C{\tau}^{-1} \sum_{j=1}^{n}\iint_{{| y'|} < \delta} e^{-q\tau l_j(y')}dy' - \frac{C}{q}\tau^{-1} e^{-q \delta \tau}
+ C e^{-qc \tau}
\end{equation}
\bigskip
\noindent 2.
\begin{equation}\label{3.24}
 \int_D {| {H_0(x)} |}^2dx \geq C{\tau}^{-1} \sum_{j=1}^{n}\iint_{{| y'|} < \delta} e^{-2\tau l_j(y')}dy' - \frac{C}{2}\tau^{-1} e^{-2 \delta \tau}
\end{equation}
\bigskip
\noindent 3.
\begin{equation}\label{3.25}
 \int_D {| {\curl H_0(x)} |}^qdx \leq C{\tau}^{q-1} \sum_{j=1}^{n}\iint_{{| y'|} < \delta} e^{-q\tau l_j(y')}dy' - \frac{C}{q}\tau^{q-1} e^{-q \delta \tau}
+ C \tau^q e^{-qc \tau}
\end{equation}
\bigskip
\noindent 4.
\begin{equation}\label{3.26}
 \int_D {| {\curl H_0(x)} |}^2dx \geq C{\tau} \sum_{j=1}^{n}\iint_{{| y'|} < \delta} e^{-2\tau l_j(y')}dy' - \frac{C}{2}\tau e^{-2 \delta \tau}.
\end{equation}
\end{lemma}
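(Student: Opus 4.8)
The plan is to reduce all four estimates to uniform pointwise bounds on $|H_0|$ and $|\curl H_0|$ and then integrate explicitly in the coordinates $(y',y_3)$ of \eqref{3.27}. At $t=h_D(\rho)$ the modulus of the phase in \eqref{CGO-form} is $e^{\tau(x\cdot\rho-h_D(\rho))}=e^{-\tau(h_D(\rho)-x\cdot\rho)}$, so $|H_0(x)|=|\theta|\,e^{-\tau(h_D(\rho)-x\cdot\rho)}$. Since $\theta$ is a constant vector, $\curl H_0=e^{\phi}\,(\tau\rho+i\sqrt{\tau^2+k^2}\rho^\perp)\wedge\theta$ with $\phi$ the phase, hence $|\curl H_0(x)|=|(\tau\rho+i\sqrt{\tau^2+k^2}\rho^\perp)\wedge\theta|\,e^{-\tau(h_D(\rho)-x\cdot\rho)}$. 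Inserting the impenetrable choice $a=\sqrt2\,\rho^\perp$, $b\perp\rho$, $b\perp\rho^\perp$ into \eqref{form-eta} and using $\zeta\cdot b=0$, $\zeta\wedge a=-i\sqrt2\,\tau(\rho\wedge\rho^\perp)$ and $|\zeta|=\sqrt{2\tau^2+k^2}$ gives $\theta\to -ik(\rho\wedge\rho^\perp)$ as $\tau\to\infty$, so $c\le|\theta|\le C$ for $\tau\gg1$; evaluating the cross product in the orthonormal frame $\{\rho,\rho^\perp,\rho\wedge\rho^\perp\}$ yields $|(\tau\rho+i\sqrt{\tau^2+k^2}\rho^\perp)\wedge\theta|=k\sqrt{2\tau^2+k^2}\,(1+o(1))$. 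Thus, uniformly for $\tau\gg1$, $c\,e^{-\tau(h_D(\rho)-x\cdot\rho)}\le|H_0(x)|\le C\,e^{-\tau(h_D(\rho)-x\cdot\rho)}$ and $c\,\tau\,e^{-\tau(h_D(\rho)-x\cdot\rho)}\le|\curl H_0(x)|\le C\,\tau\,e^{-\tau(h_D(\rho)-x\cdot\rho)}$.

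Next I would write $\int_D=\int_{D\setminus D_\delta}+\int_{D_\delta}$. On $D\setminus D_\delta$ the pointwise bounds reduce $\int|H_0|^q$ and $\int|\curl H_0|^q$ to $\int_{D\setminus D_\delta}e^{-q\tau(h_D(\rho)-x\cdot\rho)}dx=O(e^{-qc\tau})$, the displayed estimate preceding the lemma, producing the remainders $Ce^{-qc\tau}$ in \eqref{3.23} and $C\tau^q e^{-qc\tau}$ in \eqref{3.25} (the extra $\tau^q$ being the curl factor). On $D_\delta=\cup_j D_{j,\delta}$ I change variables through \eqref{3.27}; after the rotation and translation normalizing $\alpha_j=0$ and $\rho=e_3$ this map has Jacobian of modulus $1$, $h_D(\rho)-x\cdot\rho=y_3$, and $D_{j,\delta}$ corresponds to $\{(y',y_3):|y'|<\delta,\ l_j(y')<y_3<\delta\}$ with $l_j(0)=0$ (the ball/slab discrepancy being absorbed into the constants). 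The inner integral is explicit, $\int_{l_j(y')}^{\delta}e^{-q\tau y_3}\,dy_3=\frac{1}{q\tau}(e^{-q\tau l_j(y')}-e^{-q\tau\delta})$, so integrating over $|y'|<\delta$ and summing over $j$ gives the leading term $C\tau^{-1}\sum_j\iint_{|y'|<\delta}e^{-q\tau l_j(y')}dy'$ and the boundary term $-\frac{C}{q}\tau^{-1}e^{-q\delta\tau}$.

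It then remains to assemble. For the upper bounds \eqref{3.23} and \eqref{3.25} I combine the two regions using $\int_{D_\delta}\le\sum_j\int_{D_{j,\delta}}$; the curl case carries the extra factor $\tau^q$, turning $\tau^{-1}$ into $\tau^{q-1}$. For the lower bounds \eqref{3.24} and \eqref{3.26} (with $q=2$) I discard the nonnegative contribution of $D\setminus D_\delta$, apply the lower pointwise bounds, and keep $\sum_j\int_{D_{j,\delta}}$ up to the overlap multiplicity, which is absorbed into $C$; the prefactor is $\tau^{-1}$ for $|H_0|^2$ and $\tau^2\cdot\tau^{-1}=\tau$ for $|\curl H_0|^2$, matching \eqref{3.24} and \eqref{3.26}. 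I expect the only genuinely delicate point to be the two-sided, and especially the lower, bound on $|(\tau\rho+i\sqrt{\tau^2+k^2}\rho^\perp)\wedge\theta|$: one must check that $\theta$ stays uniformly transverse to the complex phase vector $\tau\rho+i\sqrt{\tau^2+k^2}\rho^\perp$ so the cross product does not degenerate below order $\tau$, and the explicit limit $\theta\to-ik(\rho\wedge\rho^\perp)$, orthogonal to the $\{\rho,\rho^\perp\}$-plane, is exactly what secures this. The rest is routine Fubini and change-of-variable bookkeeping.
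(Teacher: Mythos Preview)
Your argument is correct and follows the paper's approach exactly: split $D$ into $D_\delta$ and $D\setminus D_\delta$, use the pointwise bounds $|H_0|\asymp e^{-\tau(h_D(\rho)-x\cdot\rho)}$ coming from $\theta=\mathcal{O}(1)$, change variables via \eqref{3.27}, and integrate the exponential in $y_3$ explicitly. The paper only writes out points 1 and 2 and leaves 3 and 4 implicit (relying on $\curl H_0=ikE_0$ with $\eta=\mathcal{O}(\tau)$), whereas you compute $\curl H_0$ directly and verify the nondegeneracy of the cross product via $\theta\to -ik(\rho\wedge\rho^\perp)$; this is a mild elaboration but the same method.
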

\begin{proof}
 We only give the proofs for the points \noindent 1. and \noindent 2.
Recall that, for $t > 0$ we are considering the CGO solutions as follows.
\begin{equation}
\begin{cases}\label{CGO-formImpeneq}
& E_0  := \eta e^{\{{\tau(x{\cdot}\rho-t)}+i\sqrt{{\tau}^2+k^2}{x\cdot{\rho}^\perp}\}}, \\ 
& H_0  := \theta e^{\{{\tau(x{\cdot}\rho-t)}+i\sqrt{{\tau}^2+k^2}{x\cdot{\rho}^\perp}\}},
\end{cases}
\end{equation}
where $\eta = \mathcal{O}(\tau)$ and $\theta = \mathcal{O}(1)$, for $ \tau >>1$.
\bigskip

\noindent 1.
\begin{align}
 \int_D |H_0(x)|^q dx 
& = \int_D e^{-q \tau(h_D(\rho)-x \cdot \rho )}|\theta|^q dx \nonumber \\
& \leq C \int_D {e^{-q \tau(h_D(\rho)-x \cdot \rho )}} dx \nonumber \\
& = \int_{D_\delta} {e^{-q \tau(h_D(\rho)-x \cdot \rho )}} dx + \int_{D\setminus D_\delta} {e^{-q \tau(h_D(\rho)-x \cdot \rho )}} dx \nonumber \\
& \leq C  \sum_{j=1}^{n}\iint_{|y'| < \delta} dy' \int_{l_j(y')}^{\delta} e^{-q \tau y_3} dy_3 + C e^{-qc \tau} \nonumber \\
& = C{\tau}^{-1} \sum_{j=1}^{n}\iint_{|y'| < \delta} e^{-q\tau l_j(y')}dy' - \frac{C}{q}\tau^{-1} e^{-q \delta \tau}
  + C e^{-qc \tau}. \nonumber
\end{align}
\bigskip
\noindent 2.
 \begin{align*}
 \int_D |H_0(x)|^2 dx 
&= \int_D e^{-2 \tau(h_D(\rho)-x \cdot \rho )}|\theta|^2 dx \\
& \geq C \int_D {e^{-2 \tau(h_D(\rho)-x \cdot \rho )}} dx 
 \geq C \int_{D_\delta} {e^{-2 \tau(h_D(\rho)-x \cdot \rho )}} dx \\ 
& \geq C  \sum_{j=1}^{n}\iint_{|y'| < \delta} dy' \int_{l_j(y')}^{\delta} e^{-2 \tau y_3} dy_3  \\
& = C{\tau}^{-1} \sum_{j=1}^{n}\iint_{|y'| < \delta} e^{-2\tau l_j(y')}dy' - \frac{C}{2}\tau^{-1} e^{-2 \delta \tau}. 
\end{align*}       
\end{proof}
\begin{lemma}\label{lemma3.10}
We have the following estimate
\begin{equation}\label{3.28}
 \frac{\|H_0\|_{L^2(D)}^{2}}{\|\curl H_0\|_{L^2(D)}^{2}} \leq \mathcal{O}(\tau^{-2}), \ \ \tau\gg1. 
\end{equation}
\end{lemma}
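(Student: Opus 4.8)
The plan is to bound the numerator from above and the denominator from below by comparing both to the same geometric quantity
\[
 \Sigma(\tau) := \sum_{j=1}^{n}\iint_{|y'|<\delta} e^{-2\tau l_j(y')}\,dy',
\]
and then to read off the power of $\tau$. Taking $q=2$ in \eqref{3.23} and invoking \eqref{3.26} of Lemma \ref{lemma3.9}, I would record, for $\tau\gg1$,
\[
 \|H_0\|_{L^2(D)}^2 \leq C\tau^{-1}\Sigma(\tau) + C e^{-2c\tau},\qquad \|\curl H_0\|_{L^2(D)}^2 \geq C\tau\,\Sigma(\tau) - \tfrac{C}{2}\tau e^{-2\delta\tau},
\]
where in the first inequality I simply discard the negative term $-\tfrac{C}{2}\tau^{-1}e^{-2\delta\tau}$, which only sharpens an upper bound. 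The essential scaling is already visible here: the numerator carries a factor $\tau^{-1}\Sigma(\tau)$ and the denominator a factor $\tau\,\Sigma(\tau)$, so their quotient should be of order $\tau^{-2}$, provided the common factor $\Sigma(\tau)$ cancels cleanly and the exponentially small remainders can be absorbed.

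The crucial intermediate step is therefore a lower bound on $\Sigma(\tau)$ that decays no faster than a fixed power of $\tau$. Since $\partial D$ is Lipschitz and each $l_j$ is the local graph of the boundary with $l_j(0)=0$ (the contact point $\alpha_j$ lies on the supporting hyperplane $\{x\cdot\rho = h_D(\rho)\}$, so $y_3=0$ there), I would use the one-sided linear bound $0\leq l_j(y')\leq L|y'|$ coming from the Lipschitz constant $L$. Passing to polar coordinates in the $y'$-plane then gives
\[
 \Sigma(\tau) \geq \iint_{|y'|<\delta} e^{-2\tau L|y'|}\,dy' = 2\pi\int_0^\delta r\,e^{-2\tau L r}\,dr = \frac{2\pi}{(2\tau L)^2}\bigl(1 - e^{-2\tau L\delta}(1+2\tau L\delta)\bigr) \geq \frac{c_0}{\tau^2}
\]
for some $c_0>0$ and all $\tau\gg1$.

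With this in hand I would finish by observing that $\Sigma(\tau)\gtrsim\tau^{-2}$ dominates the exponentially decaying terms: for $\tau$ large enough one has $C e^{-2c\tau}\leq C\tau^{-1}\Sigma(\tau)$ and $\tfrac{C}{2}\tau e^{-2\delta\tau}\leq\tfrac12 C\tau\,\Sigma(\tau)$, whence
\[
 \|H_0\|_{L^2(D)}^2 \leq 2C\tau^{-1}\Sigma(\tau),\qquad \|\curl H_0\|_{L^2(D)}^2 \geq \tfrac12 C\tau\,\Sigma(\tau),
\]
and dividing yields
\[
 \frac{\|H_0\|_{L^2(D)}^2}{\|\curl H_0\|_{L^2(D)}^2} \leq \frac{2C\tau^{-1}\Sigma(\tau)}{\tfrac12 C\tau\,\Sigma(\tau)} = \mathcal{O}(\tau^{-2}),
\]
which is the claim. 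The only genuinely delicate point is the lower bound on $\Sigma(\tau)$; everything else is bookkeeping of the remainders. It is worth stressing that this lower bound requires \emph{only} the Lipschitz estimate $l_j(y')\leq L|y'|$ and no positive lower bound on the curvature of $\partial D$, which is precisely the improvement over the earlier approaches.
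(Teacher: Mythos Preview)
Your argument is correct and follows essentially the same route as the paper: both invoke \eqref{3.23} (with $q=2$) and \eqref{3.26} from Lemma~\ref{lemma3.9}, establish the key lower bound $\Sigma(\tau)\gtrsim\tau^{-2}$ from the Lipschitz estimate $l_j(y')\leq L|y'|$, and use it to absorb the exponentially small remainders before reading off the $\tau^{-2}$ scaling. The only cosmetic differences are that the paper bounds the reciprocal ratio $\|\curl H_0\|_{L^2(D)}^2/\|H_0\|_{L^2(D)}^2$ from below and obtains $\Sigma(\tau)\gtrsim\tau^{-2}$ via the substitution $y'\mapsto\tau y'$ rather than your explicit polar-coordinate integral.
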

\begin{proof}
We have the following estimate
\begin{align}
\sum_{j=1}^n \iint_{|y'|<\delta} e^{-2\tau l_j(y')} dy' 
&\geq C\sum_{j=1}^n \iint_{|y'|<\delta} e^{-2\tau |y'|} dy' \nonumber \\
&\geq C\tau^{-2} \sum_{j=1}^n \iint_{|y'|<\tau \delta} e^{-2 |y'|} dy' 
 =\mathcal{O}(\tau^{-2}), \label{lower_v}
\end{align}
 since we have $ l_j(y') \leq C | y'|$ if $ \partial D$ is Lipschitz. Now using Lemma \ref{lemma3.9} we obtain
\[
\begin{split}
 \frac{\|\curl H_0\|_{L^2(D)}^{2}}{\|H_0\|_{L^2(D)}^{2}} 
& \geq \frac{C\tau \sum_{j=1}^{n}\iint_{{| y'|}<\delta}e^{-2\tau l_j(y')}dy' - \frac{C}{2}\tau e^{-2\delta \tau}}{C\tau^{-1} \sum_{j=1}^{n}\iint_{{| y'|}<\delta}e^{-2\tau l_j(y')}dy' - \frac{C}{2}\tau^{-1} e^{-2\delta \tau} + Ce^{-2c \tau}} \\
&= C \tau^2 \frac{1-\frac{C}{2} \frac{e^{-2 \delta \tau}}{\sum_{j=1}^{n}\iint_{| y'| < \delta}e^{-2\tau l_j(y')}dy'}}{1-\frac{C}{2} \frac{e^{-2 \delta \tau}}{\sum_{j=1}^{n}\iint_{| y'| < \delta}e^{-2\tau l_j(y')}dy'} + \frac{C \tau e^{-2c\tau}}{\sum_{j=1}^{n}\iint_{| y'| < \delta}e^{-2\tau l_j(y')}dy'}} \\
& = \mathcal{O} (\tau ^2) \ \ (\tau\gg1). 
\end{split}
\]
\end{proof}
\begin{lemma}\label{neiii}
 For $p<2$, we have the following estimate
 \[
  \frac{\|\curl H_0\|_{L^p(D)}^{2}}{\|\curl H_0\|_{L^2(D)}^{2}} \leq C \tau^{1-\frac{2}{p}}, \ \ \tau\gg1.
 \]
\end{lemma}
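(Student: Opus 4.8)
The plan is to estimate the numerator and the denominator separately using the two sides of Lemma \ref{lemma3.9}, and then to relate the two resulting sums by a single application of Hölder's inequality. To organize the bookkeeping I will abbreviate
\[
\Sigma_q(\tau) := \sum_{j=1}^{n}\iint_{|y'|<\delta} e^{-q\tau l_j(y')}\,dy',
\]
so that the upper bound \eqref{3.25} (with $q=p$) reads $\|\curl H_0\|_{L^p(D)}^{p} \leq C\tau^{p-1}\Sigma_p(\tau) + (\text{exp.\ small})$, while the lower bound \eqref{3.26} reads $\|\curl H_0\|_{L^2(D)}^{2}\geq C\tau\,\Sigma_2(\tau) - (\text{exp.\ small})$. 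Since $\partial D$ is Lipschitz we have $0\le l_j(y')\le C|y'|$, and the computation in \eqref{lower_v} gives $\Sigma_2(\tau)\ge C\tau^{-2}$ (and the same lower bound for $\Sigma_p(\tau)$); this polynomial lower bound shows that the exponentially small correction terms are negligible for $\tau\gg1$, so it suffices to track the main terms $C\tau^{p-1}\Sigma_p(\tau)$ and $C\tau\,\Sigma_2(\tau)$.

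The key step is to bound $\Sigma_p(\tau)$ in terms of $\Sigma_2(\tau)$. Writing $e^{-p\tau l_j}=\bigl(e^{-2\tau l_j}\bigr)^{p/2}\cdot 1$ and applying Hölder's inequality on the disk $\{|y'|<\delta\}$ with conjugate exponents $2/p$ and $2/(2-p)$, I obtain
\[
\iint_{|y'|<\delta} e^{-p\tau l_j(y')}\,dy' \le \Bigl(\iint_{|y'|<\delta} e^{-2\tau l_j(y')}\,dy'\Bigr)^{p/2}\Bigl(\iint_{|y'|<\delta} dy'\Bigr)^{(2-p)/2} \le C\Bigl(\iint_{|y'|<\delta} e^{-2\tau l_j(y')}\,dy'\Bigr)^{p/2}.
\]
Summing over the finitely many indices $j=1,\dots,n$ and using $\sum_j a_j^{p/2}\le n\,\max_j a_j^{p/2}\le n\bigl(\sum_j a_j\bigr)^{p/2}$ (valid because $p/2<1$), this yields $\Sigma_p(\tau)\le C\,\Sigma_2(\tau)^{p/2}$.

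Finally I would combine the pieces. Raising the numerator bound to the power $2/p$ and inserting the last inequality,
\[
\|\curl H_0\|_{L^p(D)}^{2} \le C\,\tau^{(p-1)\frac{2}{p}}\,\Sigma_p(\tau)^{2/p} \le C\,\tau^{2-\frac{2}{p}}\,\Sigma_2(\tau),
\]
since $\bigl(\Sigma_2(\tau)^{p/2}\bigr)^{2/p}=\Sigma_2(\tau)$. Dividing by the denominator bound $\|\curl H_0\|_{L^2(D)}^{2}\ge C\tau\,\Sigma_2(\tau)$ cancels $\Sigma_2(\tau)$ and produces the claimed exponent $\tau^{2-2/p-1}=\tau^{1-2/p}$. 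The one point that requires care — and the reason no curvature hypothesis on $\partial D$ is needed — is precisely the Hölder estimate: it converts the $L^p$-sum into the $L^2$-sum uniformly in the geometry, for arbitrary nonnegative profiles $l_j$, so that the unknown quantity $\Sigma_2(\tau)$ cancels identically rather than having to be estimated sharply from both sides.
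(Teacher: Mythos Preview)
Your proof is correct and follows essentially the same route as the paper: both use Lemma~\ref{lemma3.9} for the upper and lower bounds, apply H{\"o}lder's inequality with exponent $2/p$ to control $\Sigma_p(\tau)$ by $\Sigma_2(\tau)^{p/2}$, and invoke the polynomial lower bound \eqref{lower_v} to discard the exponentially small remainders. Your write-up is slightly more explicit about the termwise H{\"o}lder step and the handling of the error terms, but the argument is the same.
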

\begin{proof}
Using the H{\"o}lder inequality with exponent $q=\frac{2}{p}>1$, we have:
$$\sum_{j=1}^n \iint_{|y'|<\delta} e^{-p\tau l_j(y')} dy'\leq C (\sum_{j=1}^n \iint_{|y'|<\delta} e^{-2\tau l_j(y')} dy')^{\frac{p}{2}}.$$
Using Lemma \ref{lemma3.9} and \eqref{lower_v}, we obtain
 \begin{align}
 \frac{\|\curl H_0\|_{L^p(D)}^{2}}{\|\curl H_0\|_{L^2(D)}^{2}} 
& = \frac{\left( \int_D |\curl H_0(x)|^p dx \right) ^{\frac{2}{p}}}{\int_D |\curl H_0(x)|^2dx} \nonumber \\
&\leq C\frac{\tau^{(p-1)\frac{2}{p}} \left[\left(\sum_{j=1}^n \iint_{|y'|<\delta} e^{-p\tau l_j(y')} dy'\right)^{\frac{2}{p}} +\mathcal{O}(\tau^{\frac{2}{p}}e^{-2\delta \tau}) + \mathcal{O}(e^{-2c\tau})\right]}{\tau \sum_{j=1}^n \iint_{|y'|<\delta} e^{-2\tau l_j(y')} dy' +\mathcal{O}(\tau e^{-2\delta \tau})} \nonumber \\
&\leq \tau^{1-\frac{2}{p}}\frac{ \sum_{j=1}^n \iint_{|y'|<\delta} e^{-2\tau l_j(y')} dy' +\mathcal{O}(e^{-2\delta \tau}) + \mathcal{O}(\tau^{\frac{2}{p}}e^{-2c\tau})}{ \sum_{j=1}^n \iint_{|y'|<\delta} e^{-2\tau l_j(y')} dy' +\mathcal{O}(e^{-2\delta \tau})} \nonumber \\
&=\tau^{1-\frac{2}{p}}\frac{ 1 +\frac{\mathcal{O}(e^{-2\delta \tau}) + \mathcal{O}(\tau^{\frac{2}{p}}e^{-2c\tau})}{\sum_{j=1}^N \iint_{|y'|<\delta} e^{-2\tau l_j(y')} dy'}}{ 1+\frac{\mathcal{O}(e^{-2\delta \tau})}{\sum_{j=1}^N \iint_{|y'|<\delta} e^{-2\tau l_j(y')} dy'}} \nonumber \\
&\leq C\tau^{1-\frac{2}{p}} \ \ (\tau\gg1). \label{pene_estimate}
  \end{align}
\end{proof}
\begin{lemma}\label{lemma3.11}
If $t = h_D(\rho),$ then for some positive constant C,
\begin{equation*}
 \liminf_{\tau\rightarrow \infty} \int_{D}\tau |\curl H_0(x)|^2 dx \geq C.
\end{equation*}
\end{lemma}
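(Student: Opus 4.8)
The plan is to combine the lower bound for $\|\curl H_0\|_{L^2(D)}^{2}$ obtained in Lemma \ref{lemma3.9} (point 4, estimate \eqref{3.26}) with the lower bound on the sum of localized integrals already established inside the proof of Lemma \ref{lemma3.10}. Since here we set $t = h_D(\rho)$, the CGO solution \eqref{CGO-formImpeneq} gives $|\curl H_0(x)|^2$ precisely in the form treated in Lemma \ref{lemma3.9} (with $\theta = \mathcal{O}(1)$ and the phase reduced to $e^{-2\tau(h_D(\rho)-x\cdot\rho)}$), so \eqref{3.26} applies directly and reads
\[
\int_D |\curl H_0(x)|^2 dx \geq C\tau \sum_{j=1}^{n}\iint_{|y'|<\delta} e^{-2\tau l_j(y')}dy' - \frac{C}{2}\tau e^{-2\delta\tau}.
\]

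Next I would quantify the localized sum. Using the Lipschitz bound $l_j(y')\leq C|y'|$ and the rescaling $z' = \tau y'$ exactly as in \eqref{lower_v}, one obtains
\[
\sum_{j=1}^{n}\iint_{|y'|<\delta} e^{-2\tau l_j(y')}dy' \geq C\tau^{-2}\sum_{j=1}^{n}\iint_{|z'|<\tau\delta} e^{-2|z'|}dz'.
\]
The one point I would make precise here, and the only place requiring a little care, is that the rescaled integral converges as $\tau\to\infty$ to the finite positive limit $n\iint_{\mathbb{R}^2}e^{-2|z'|}dz'$; hence there is a constant $c_0>0$ with $\sum_{j=1}^{n}\iint_{|y'|<\delta} e^{-2\tau l_j(y')}dy' \geq c_0\tau^{-2}$ for all $\tau$ large enough.

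Substituting this lower bound into the displayed estimate for $\int_D |\curl H_0|^2 dx$ and multiplying through by $\tau$ yields
\[
\tau\int_D |\curl H_0(x)|^2 dx \geq C c_0 - \frac{C}{2}\tau^2 e^{-2\delta\tau}.
\]
Since $\tau^2 e^{-2\delta\tau}\to 0$ as $\tau\to\infty$, taking the $\liminf$ gives $\liminf_{\tau\to\infty}\tau\int_D |\curl H_0(x)|^2 dx \geq C c_0 > 0$, which is the assertion. I do not expect any genuine obstacle here: all the analytic work, namely the localization near $K$, the change of coordinates \eqref{3.27}, and the integral asymptotics, has already been carried out in Lemmas \ref{lemma3.9} and \ref{lemma3.10}, so this statement follows essentially by assembling those two estimates and discarding the exponentially small remainder term.
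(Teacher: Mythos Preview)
Your proposal is correct and follows essentially the same route as the paper: both start from the lower bound \eqref{3.26} (the paper rewrites $|\curl H_0|^2 \geq C|E_0|^2$ first, which is just the Maxwell relation $\curl H_0 = -ikE_0$ and leads to the same inequality), then invoke the Lipschitz estimate $l_j(y')\leq C|y'|$ and the rescaling \eqref{lower_v} to get $\int_D|\curl H_0|^2dx \geq C\tau^{-1}$ for large $\tau$, and finally multiply by $\tau$ and take the $\liminf$.
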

\begin{proof}
\begin{align*}
\int_{D}|\curl H_0(x)|^2 dx 
& \geq C \int_{D}|E_0(x)|^2 dx \\
& \geq C \tau\sum_{j=1}^{n}\iint_{|y'| < \delta} e^{-2\tau l_j(y')}dy' - \frac{C}{2}\tau e^{-2\delta \tau} \\
& \geq C\tau \sum_{j=1}^{n}\iint_{|y'| < \delta} e^{-2\tau |y'|}dy' - \frac{C}{2}\tau e^{-2\delta \tau} \\
& \geq C\tau \left[{\tau}^{-2}\sum_{j=1}^{n}\iint_{|y'| < \tau \delta} e^{-2|y'|}dy'\right] - \mathcal{O}(\tau e^{-2\delta \tau})\\
& \geq C\tau^{-1}, \ \ (\tau\gg1).
\end{align*}
Hence 
\begin{equation*}
 \liminf_{\tau\rightarrow \infty}  \int_{D}\tau|\curl H_0(x)|^2 dx \geq C>0. \ \ 
\end{equation*}
\end{proof}
\textbf{End of the proof of Theorem \ref{theorem1}}\\
Recall that from Lemma \ref{lemma3.1}, we have
\begin{equation*}
 -\frac{1}{\tau} I_{\rho}(\tau,t) \geq \int_{D}\{|\curl H_0(x)|^2-k^2|H_0(x)|^2\}dx -k^2\int_{\Omega\setminus\overline{D}}|\tilde{H}(x)|^2dx.
\end{equation*}
Now, from Proposition \ref{proposition1}, we deduce
\begin{equation}\label{MyGOd}
 -\frac{1}{\tau}I_{\rho}(\tau,h_D(\rho)) \geq \int_{D}\{|\curl H_0(x)|^2-k^2|H_0(x)|^2\}dx - C[\|H_0\|_{H^{s+1/2}(D)}^{2} + \|E_0\|_{L^p(D)}^{2}],
\end{equation}
where $0<s\leq 1$ and $4/3<p<2$.
We now estimate the term $\frac{\|H_0\|_{H^{s+1/2}(D)}^{2}}{\|\curl H_0\|_{L^2(D)}^{2}}$, for $0<s\leq 1$.
Set $t=s+1/2.$ Then we need to estimate $\frac{\|H_0\|_{H^t(D)}^{2}}{\|\curl H_0\|_{L^2(D)}^{2}},$ for $t\in(\frac{1}{2},\frac{3}{2}]$.
Using the interpolation inequality, we have 
\[
 \|H_0\|_{H^t(D)} \leq C \|H_0\|_{L^2(D)}^{1-t}\|H_0\|_{H^1(D)}^{t}, \ \ 0\leq t\leq 1.
\]
By the Young inequality 
$ab\leq \delta^{-\alpha}\frac{a^{\alpha}}{\alpha} + \delta^{\beta}\frac{b^{\beta}}{\beta}, \ \ \frac{1}{\alpha} + \frac{1}{\beta} = 1,$
we can write
\begin{equation}\label{interPOL}
 \|H_0\|_{H^t(D)}^{2} \leq C \left[\frac{\delta^{-\alpha}}{\alpha}\|H_0\|_{L^2(D)}^{2(1-t)\alpha} + \frac{\delta^{\beta}}{\beta}\|H_0\|_{H^1(D)}^{2t\beta} \right].
\end{equation}
Choose $\beta = t^{-1}, 0<t<1$. Hence, $\alpha = (1-t)^{-1}, 0<t<1.$
So, $2(1-t)\alpha=2$ and $2t\beta =2$. Then the estimate \eqref{interPOL} becomes
\begin{align}
 \|H_0\|_{H^t(D)}^{2}
 & \leq C \left[\frac{\delta^{-\alpha}}{\alpha}\|H_0\|_{L^2(D)}^{2} + \frac{\delta^{\beta}}{\beta}\|H_0\|_{H^1(D)}^{2}\right] \nonumber \\
 & \leq C \left[\{(1-t)\delta^{-(1-t)^{-1}}+t\delta^{t^{-1}}\}\|H_0\|_{L^2(D)}^{2}+ t\delta^{t^{-1}}\|\nabla H_0\|_{L^2(D)}^{2} \right]. \label{inTERRRR}
\end{align}
Recall that $H_0 = \theta e^{\tau(x\cdot\rho-t)+i\sqrt{\tau^2+k^2}x\cdot\rho^{\bot}},$ where $\theta = \mathcal{O}(1), \ \ \tau\gg1.$
Therefore, 
\[
\frac{\partial H_0}{\partial x_j} = \theta (\tau\rho_j+i\sqrt{\tau^2+k^2}\rho_{j}^{\bot})e^{\tau(x\cdot\rho-t)+i\sqrt{\tau^2+k^2}x\cdot\rho^{\bot}}.
\]
Hence
\begin{align*}
 \|\nabla H_0\|_{L^2(D)}^{2}
& = \sum_{j=1}^{3}\|\frac{\partial H_0}{\partial x_j}\|_{L^2(D)}^{2}\\
& \leq C \sum_{j=1}^{3}\int_D [\tau^2\rho_{j}^{2}+(\tau^2 +k^2){\rho_{j}^{\bot}}^2] e^{2\tau(x\cdot\rho-t)}dx\\
& \leq C \tau^2\int_D e^{2\tau(x\cdot\rho-t)}dx.
\end{align*}
For $t=h_D(\rho),$ we obtain
 \begin{align}
   \|\nabla H_0\|_{L^2(D)}^{2}
& \leq C \tau^2\int_D e^{-2\tau(h_D(\rho)-x\cdot\rho)}dx \nonumber \\
& = C\tau^2\left(\int_{D_{\delta}}+\int_{D\setminus \overline{D_{\delta}}}\right)e^{-2\tau(h_D(\rho)-x\cdot\rho)}dx \nonumber \\
& \leq C\tau^2 \sum_{j=1}^{n}\iint_{{| y'|} < \delta}dy'\int_{l_j(y')}^{\delta}e^{-2\tau y_3}dy_3 + C\tau^2e^{-2c\tau} \nonumber \\
& \leq C\tau \sum_{j=1}^{n}\iint_{|y'| < \delta}e^{-2\tau l_j(y')}dy' - \frac{C}{2}\tau e^{-2\delta\tau} + C \tau^2 e^{-2c\tau}. \label{KOREA}
 \end{align}
From Lemma \ref{lemma3.9} and \eqref{KOREA}, we have 
\begin{equation}\label{fractt}
 \frac{\|\nabla H_0\|_{L^2(D)}^{2}}{\|\curl H_0\|_{L^2(D)}^{2}} \leq C.
\end{equation}
Hence from \eqref{3.28} together with \eqref{inTERRRR} and \eqref{fractt} we obtain
\begin{align}
 \frac{\|H_0\|_{H^t(D)}^{2}}{\|\curl H_0\|_{L^2(D)}^{2}}
& \leq C \{(1-t)\delta^{-(1-t)^{-1}}+t\delta^{t^{-1}}\}\frac{\|H_0\|_{L^2(D)}^{2}}{\|\curl H_0\|_{L^2(D)}^{2}} + t\delta^{t^{-1}}\frac{\|\nabla H_0\|_{L^2(D)}^{2}}{\|\curl H_0\|_{L^2(D)}^{2}} \nonumber \\
& \leq C \{(1-t)\delta^{-(1-t)^{-1}}+t\delta^{t^{-1}}\}\mathcal{O}(\tau^{-2}) + Ct\delta^{t^{-1}}. \label{lions}
\end{align}
We now choose $p$ such that $\max\{2-\delta,4/3\}<p<2.$
Combining \eqref{MyGOd} and \eqref{lions} together with Lemma \ref{lemma3.10} and Lemma \ref{neiii}, we obtain
\begin{align}
 \frac{-\frac{1}{\tau}I_{\rho}(\tau,h_D(\rho))}{\|\curl H_0\|_{L^2(D)}^{2}}
& \geq C-c_1\frac{\|H_0\|_{L^2(D)}^{2}}{\|\curl H_0\|_{L^2(D)}^{2}}-c_2\frac{\|H_0\|_{H^t(D)}^{2}}{\|\curl H_0\|_{L^2(D)}^{2}}-c_3\frac{\|\curl H_0\|_{L^p(D)}^{2}}{\|\curl H_0\|_{L^2(D)}^{2}} \nonumber \\
& \geq C - c_1\{(1-t)\delta^{-(1-t)^{-1}}+t\delta^{t^{-1}}-1\}\mathcal{O}(\tau^{-2})-c_2t\delta^{t^{-1}}-c_3\tau^{1-\frac{2}{p}} \nonumber \\
& \geq C - c_2t\delta^{t^{-1}}, \ \ 1/2<t<1, \ \ \tau\gg1.      \label{amaRRRb}
\end{align}
Now, we fix $t$ in $(1/2,1)$ and choose $\delta>0$ such that $C - c_2t\delta^{t^{-1}}>c>0,$
then the estimate \eqref{amaRRRb} becomes
\[
 \frac{-\frac{1}{\tau}I_{\rho}(\tau,h_D(\rho))}{\|\curl H_0\|_{L^2(D)}^{2}} \geq c>0, \ \ \tau\gg1.
\]
Hence form Lemma \ref{lemma3.11} we obtain
\[
 \liminf_{\tau{\rightarrow}\infty} \vert{I_{\rho}(\tau,h_D(\rho))}\vert  \geq c>  0. 
\]
\end{section}
\begin{section}{\textbf {{Proof of Theorem \ref{theorem1} for the penetrable case}}}
In this section, we prove our main theorem for the penetrable obstacle case.
 For a wave number $k>0,$ electric permittivity $\epsilon >0 $ and magnetic permeability $\mu >0$, consider the penetrable obstacle problem as follows 
\begin{equation}\label{III}
\begin{cases} 
& \curl E-ik \mu H=0 \ \ \text{in}\ \Omega,\\
& \curl H+ik \epsilon E=0 \ \ \text{in}\ \Omega,\\
& \nu\wedge E=f \ \ \text{on}\ \partial\Omega
\end{cases}
\end{equation}
where $k$ is not an eigenvalue of the spectral problem corresponding to (\ref{III}). 
Recall that, in this section we use the CGO solutions of the form 
\begin{equation}
\begin{cases}\label{CGO-formPerTT}
& E_0  := \eta e^{\{{\tau(x{\cdot}\rho-t)}+i\sqrt{{\tau}^2+k^2}{x\cdot{\rho}^\perp}\}}, \\ 
& H_0  := \theta e^{\{{\tau(x{\cdot}\rho-t)}+i\sqrt{{\tau}^2+k^2}{x\cdot{\rho}^\perp}\}}
\end{cases}
\end{equation}
where $\eta = \mathcal{O}(1)$ and $\theta = \mathcal{O}(\tau)$ for all $\tau\gg1$ and $t>0.$
Let $\tilde E = E-E_0$ be the reflected solution. It satisfies the following problem
\begin{equation}\label{IV}
\begin{cases}
& \curl\frac{1}{\mu(x)}\curl\tilde E-k^2 \epsilon(x) \tilde E = -\curl(\frac{1}{\mu(x)}-1)\curl E_0 + k^2 (\epsilon(x)- 1)E_0 \ \ \text{in}\ \Omega,\\
& \nu\wedge \tilde E=0 \ \ \text{on}\ \partial\Omega.
\end{cases}
\end{equation}
\begin{lemma}\label{lemma4.1}
We have the estimates
\begin{equation*}
-\tau^{-1}I_{\rho}(\tau,t) \geq \int_{D}(1-\mu(x))|\curl E_0(x)|^2dx - k^2 \int_{\Omega} \vert \tilde E(x)\vert^2dx -k^2 \int_{D}(\epsilon(x)-1)\vert E(x)\vert^2 dx, 
\end{equation*}
and
\begin{equation*}
 \tau^{-1}I_{\rho}(\tau,t) \geq \int_{D}(1-\frac{1}{\mu(x)})\vert \curl E_0(x)\vert^2dx - k^2\int_{\Omega}\epsilon(x)\vert \tilde E(x)\vert^2dx + k^2\int_D(\epsilon(x)-1)\vert E_0(x)\vert^2dx.  
\end{equation*}
The first inequality will be used if $1-\mu(x)>0$ and the second one if $1-\mu(x)<0.$
\end{lemma}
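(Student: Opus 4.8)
The plan is to mimic, in the penetrable setting, the energy computation behind Lemma \ref{lemma3.1}: turn the indicator function into a volume integral over $\Omega$ and then isolate the part supported in $D$, where $\mu\neq 1$ and $\epsilon\neq 1$. First I would identify the background data. Since $(E_0,H_0)$ solves the free Maxwell system \eqref{CGO-linear-Maxwell} in all of $\Omega$, it is itself the obstacle-free solution carrying the boundary datum $f=\nu\wedge E_0|_{\partial\Omega}$; hence $\Lambda_\emptyset(\nu\wedge E_0)=\nu\wedge H_0|_{\partial\Omega}$ and therefore $(\Lambda_D-\Lambda_\emptyset)(\nu\wedge E_0)=\nu\wedge\tilde H$ with $\tilde H=H-H_0$. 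Substituting into the definition of $I_\rho(\tau,t)$ reduces it, up to the factor $ik\tau$, to the boundary pairing $\int_{\partial\Omega}(\nu\wedge E_0)\cdot\overline{(\nu\wedge\tilde H)\wedge\nu}\,dS$ of the electric CGO datum against the reflected magnetic field.

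Next I would convert this boundary term into a volume integral through the Green's formula \eqref{intPART2} applied to $v=E_0$ and $\varphi=\overline{\tilde H}$, and then eliminate the curls by means of the field equations $\curl E_0=ikH_0$, $\curl\tilde H=\curl H-\curl H_0=-ik(\epsilon E-E_0)$, together with the second-order relations $\curl((1/\mu)\curl E)=k^2\epsilon E$ and $\curl\curl E_0=k^2E_0$. This produces an exact identity for $\tau^{-1}I_\rho(\tau,t)$ whose integrand, after writing $\curl E=\curl E_0+\curl\tilde E$ and $E=E_0+\tilde E$, splits into a part supported in $D$ (because $\mu=\epsilon=1$ off $D$) plus bilinear cross terms in the reflected field $\tilde E$.

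The heart of the matter is to collapse those cross terms. Here I would test the reflected equation \eqref{IV} with $\tilde E$ itself, obtaining the weak identity that expresses $\int_\Omega(1/\mu)|\curl\tilde E|^2-k^2\int_\Omega\epsilon|\tilde E|^2$ in terms of exactly the same cross terms, and then use the boundary-free integration by parts $\int_\Omega\curl E_0\cdot\overline{\curl\tilde E}=\int_\Omega\curl\curl E_0\cdot\overline{\tilde E}=k^2\int_\Omega E_0\cdot\overline{\tilde E}$, legitimate since $\nu\wedge\tilde E=0$ on $\partial\Omega$. Matching the two expressions, the cross terms cancel and one is left with the clean identity
\begin{equation*}
\tau^{-1}I_\rho(\tau,t)=\int_D\Big(1-\tfrac1\mu\Big)|\curl E_0|^2+k^2\int_D(\epsilon-1)|E_0|^2+\int_\Omega\tfrac1\mu|\curl\tilde E|^2-k^2\int_\Omega\epsilon|\tilde E|^2 .
\end{equation*}
Since $\mu>0$, discarding the nonnegative $\int_\Omega(1/\mu)|\curl\tilde E|^2$ yields the second inequality. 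For the first inequality I would expand the same exact volume identity around the true fields $(E,H)$ rather than around the CGO: this naturally reorganizes the $D$-supported part into $\int_D(1-\mu)|\curl E_0|^2$ and $-k^2\int_D(\epsilon-1)|E|^2$, while the remainder reorganizes into $-k^2\int_\Omega|\tilde E|^2$ plus a nonnegative quadratic in the reflected field (recombined through the magnetic counterpart of \eqref{IV}, whose boundary contribution on $\partial\Omega$ vanishes because $\epsilon\equiv1$ near $\partial\Omega$ and $\nu\wedge\tilde E=0$ there, so that $\nu\wedge\curl\tilde H=0$ on $\partial\Omega$); dropping that quadratic gives the first inequality. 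The choice of the $(1-\mu)$ versus the $(1-1/\mu)$ grouping is precisely what keeps the main term positive according to the sign of $1-\mu$.

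The main obstacle is this recombination step: the cross terms are complex and only become a sign-definite real quadratic after invoking the weak form of \eqref{IV} (and its magnetic analogue) together with the two boundary-free integrations by parts above. One must be scrupulous about regularity, since $\mu\in L^\infty(\Omega)$ only, so no curl may be transferred across the coefficient $1/\mu$ and every manipulation has to stay inside the weak formulation; obtaining the correct definite sign of the discarded term, and verifying that it matches the stated sign condition on $1-\mu$, is where the care lies.
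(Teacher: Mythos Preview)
Your plan is essentially the paper's proof: Step~1 there is exactly your testing of \eqref{IV} against $\tilde E$ to obtain the identity
\[
\tau^{-1}I_\rho(\tau,t)=\int_D\Big(1-\tfrac1\mu\Big)|\curl E_0|^2+k^2\int_D(\epsilon-1)|E_0|^2+\int_\Omega\tfrac1\mu|\curl\tilde E|^2-k^2\int_\Omega\epsilon|\tilde E|^2,
\]
and dropping the nonnegative $\int_\Omega(1/\mu)|\curl\tilde E|^2$ gives the second inequality; Step~2 there is your ``expand around $(E,H)$'' idea, yielding the companion identity $-\tau^{-1}I_\rho=\int_\Omega|\curl\tilde E|^2-k^2\int_\Omega|\tilde E|^2-k^2\int_D(\epsilon-1)|E|^2+\int_D(\tfrac1\mu-1)|\curl E|^2$.

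One clarification on the recombination you flag as the delicate point: the nonnegative quadratic does \emph{not} require the magnetic equation or any further integration by parts. The paper simply invokes the pointwise algebraic identity
\[
|\curl\tilde E|^2+\Big(\tfrac1\mu-1\Big)|\curl E|^2=\tfrac1\mu\,|\curl E-\mu\,\curl E_0|^2+(1-\mu)|\curl E_0|^2,
\]
valid at every $x\in\Omega$ (and trivially so where $\mu=1$). This isolates $\int_D(1-\mu)|\curl E_0|^2$ and leaves the manifestly nonnegative term $\int_\Omega\frac1\mu|\curl E-\mu\curl E_0|^2$ to be discarded, giving the first inequality directly and sidestepping any regularity issue with $\mu$.
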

\begin{proof}
\textbf{Step 1}
First we need to prove the following identity
\begin{align}
& -k^2 \int_{\Omega}(\epsilon(x)-1)| E_0(x)|^2dx + \int_{\Omega}(\frac{1}{\mu(x)}-1)|\curl E_0(x)|^2dx \nonumber \\
& + k^2 \int_{\Omega}\epsilon(x) |\tilde E(x)|^2dx - \int_{\Omega}\frac{1}{\mu(x)}|\curl\tilde E(x)|^2dx \nonumber \\
& = - \tau^{-1}I_\rho(\tau,t). \label{4.1}
\end{align}
Multiplying by $\overline{\tilde E}(x)$ in the equation \eqref{IV} and using integration by parts we obtain
\begin{equation*}
\int_{\Omega}\frac{1}{\mu(x)}{\vert \curl\tilde E(x)\vert}^2dx + \int_{\Omega}((\frac{1}{\mu(x)}-1)\curl E_0(x))\cdot (\curl\overline{\tilde E}(x))dx
-k^2 \int_{\Omega} \epsilon(x){\vert {\tilde E}(x) \vert}^2dx 
\end{equation*}
\begin{equation*}
 - k^2 \int_{\Omega}(\epsilon(x)-1)E_0(x) \cdot \overline{\tilde E}(x) dx = 0,
\end{equation*}
\begin{align}
& \int_{\Omega}\frac{1}{\mu(x)}{\vert \curl\tilde E(x) \vert}^2dx - \int_{\Omega} (\frac{1}{\mu(x)}-1){\vert \curl E_0(x)\vert}^2dx \nonumber \\
& - k^2 \int_{\Omega}\epsilon(x){\vert \tilde E(x)\vert}^2dx 
+ k^2\int_{\Omega}(\epsilon(x)-1){\vert E_0(x)\vert}^2 dx \nonumber \\
&= k^2\int_{\Omega}(\epsilon(x)-1)E_0(x) \cdot \overline{E}(x)dx 
- \int_{\Omega}(\frac{1}{\mu(x)}-1)(\curl E_0(x))\cdot (\curl\overline{E}(x))dx. \label{4.2}
\end{align}
On the other hand from equation \eqref{III} eliminating $H(x)$ we have  
\begin{equation}\label{Max_ep}
 \curl (\frac{1}{\mu(x)}\curl E(x)) - k^2 \epsilon E(x) = 0.
\end{equation}
Then multiplying by $E_0(x)$ in equation \eqref{Max_ep} and applying integration by parts we obtain
\begin{equation*}
 \int_{\Omega}(\frac{1}{\mu(x)}-1)(\curl E_0(x))\cdot (\curl\overline{E}(x))dx = k^2\int_{\Omega}(\epsilon(x)-1)E_0(x) \cdot \overline{E}(x)dx 
\end{equation*}
\begin{equation}\label{4.3}
 + \int_{\partial\Omega}(\nu \wedge E_0)(x)\cdot(\frac{1}{\mu(x)}\curl\overline{E}(x))ds(x) - \int_{\partial\Omega}(\nu \wedge \overline{E})(x)\cdot(\curl E_0(x))ds(x).
\end{equation}
Therefore combining \eqref{4.2} and  \eqref{4.3} we obtain
\begin{align}
&\int_{\Omega} \frac{1}{\mu(x)}{\vert{\curl\tilde E(x)}\vert}^2dx - \int_{\Omega}(\frac{1}{\mu(x)}-1){\vert \curl E_0(x)\vert}^2dx \nonumber \\
& -k^2\int_{\Omega} \epsilon(x) {\vert \tilde E(x)\vert}^2dx + k^2 \int_{\Omega} (\epsilon(x)-1){\vert E_0(x)\vert}^2dx \nonumber \\
& = \int_{\partial \Omega}(\nu \wedge \overline{E})(x)\cdot(\curl E_0(x))ds(x) - \int_{\partial\Omega} (\nu \wedge E_0)(x)\cdot(\frac{1}{\mu(x)}\curl\overline{E}(x))ds(x) \nonumber \\
& = \int_{\partial \Omega}(\nu \wedge E_0)(x)\cdot(\overline{\curl E_0(x)})ds(x) - \int_{\partial \Omega}(\nu \wedge E_0)(x)\cdot(\overline{\frac{1}{\mu(x)}\curl E(x)})ds(x) \nonumber \\
& = ik \int_{\partial \Omega}(\nu \wedge E_0)(x)\cdot[\overline{(\Lambda_D-\Lambda_{\emptyset})(\nu \wedge E_0)(x)}\wedge \nu(x)]ds(x)  \nonumber \\
& =\tau^{-1} I_{\rho}. \label{4.6*}
\end{align}
\textbf{Step 2}
Now, we show the following identity
\begin{align*}
 &\int_{\Omega} |\curl\tilde E(x)|^2dx - k^2 \int_{\Omega}|\tilde E(x)|^2dx - k^2 \int_{\Omega}(\epsilon(x)-1)| E(x)|^2dx + \int_{\Omega}(\frac{1}{\mu(x)}-1)|\curl E(x)|^2dx\\
 &= -k^2 \int_{\Omega}(\epsilon(x)-1)| E_0(x)|^2dx + \int_{\Omega}(\frac{1}{\mu(x)}-1)|\curl E_0(x)|^2dx + k^2 \int_{\Omega}\epsilon(x)|\tilde E(x)|^2dx \\
 &=- \int_{\Omega}\frac{1}{\mu(x)}|\curl\tilde E(x)|^2dx.
\end{align*}
Replacing $E_0(x)$ by $E(x) - {\tilde E}(x)$ in the equation \eqref{IV}, then we obtain,
\begin{equation}\label{4.5}
 \curl(\frac{1}{\mu(x)}-1)\curl\tilde E(x) +\curl\curl\tilde E(x)-k^2  {\tilde E}(x)   - k^2 (\epsilon(x)- 1)E(x) =0.
\end{equation}
 Multiplying by $\overline{\tilde E}(x)$ in the equation \eqref{4.5} and using integration by parts we obtain,
\begin{equation}\label{4.6}
 \begin{split}
 & \int_{\Omega}[(\frac{1}{\mu(x)}-1)\curl\tilde E(x)]\cdot(\curl\overline{\tilde E}(x))dx + \int_{\Omega} | {\curl\tilde E(x)}|^2dx \\
 & - k^2 \int_{\Omega} {| {\tilde E}(x) |}^2dx - k^2 \int_{\Omega}(\epsilon(x)-1)E(x)\cdot \overline{\tilde E}(x)dx =0.
\end{split}
 \end{equation}
Since, $\nu \wedge {\tilde E}(x) =0$ on the boundary, then we can write the equation \eqref{4.6} as follows
\begin{equation*}
 \int_{\Omega} |\curl\tilde E(x)|^2dx - k^2 \int_{\Omega}| {\tilde E}(x)|^2dx - k^2 \int_{\Omega}(\epsilon(x)-1)| E(x)|^2dx + \int_{\Omega}(\frac{1}{\mu(x)}-1)|\curl E(x)|^2dx
\end{equation*}
\begin{equation}\label{4.7}
 = - \int_{\Omega}[(\frac{1}{\mu(x)}-1)\curl E(x)]\cdot (\curl\overline{\tilde E}(x))dx - k^2 \int_{\Omega}(\epsilon(x)-1)E(x)\cdot \overline{E_0}(x)dx + \int_{\Omega}(\frac{1}{\mu(x)}-1)|\curl E(x)|^2dx.
\end{equation}
Eliminating $E(x)$ by $\tilde E(x) + E_0(x)$ in \eqref{4.7} we get,
\begin{equation*}
 =- k^2 \int_{\Omega}(\epsilon(x)-1) \tilde E(x)\cdot \overline{E_0}(x)dx - k^2 \int_{\Omega}(\epsilon(x)-1)| E_0(x)|^2dx + \int_{\Omega}(\frac{1}{\mu(x)}-1)(\curl\tilde E)(x)\cdot(\curl\overline{E_0}(x))dx 
\end{equation*}
\begin{equation}\label{4.8}
 + \int_{\Omega}(\frac{1}{\mu(x)}-1)|\curl E_0(x)|^2dx.
\end{equation}
Again from the equation \eqref{IV} taking complex conjugate, we can write,
\begin{equation}\label{4.9}
 \curl\frac{1}{\mu(x)}\curl\overline{\tilde E}(x) +\curl(\frac{1}{\mu(x)}-1)\curl\overline{E_0}(x) -k^2 \epsilon(x) \overline{\tilde E}(x) - k^2 (\epsilon(x)- 1){\overline{E} }_0(x) = 0.
\end{equation}
Multiplying by $\tilde E(x)$ in the equation \eqref{4.9} and using integration by parts formula the following equality follows
\begin{equation*}
 \int_{\Omega} \frac{1}{\mu(x)} |\curl\tilde E(x) |^2dx + \int_{\Omega}(\frac{1}{\mu(x)}-1)(\curl\overline{E_0}(x))\cdot(\curl\tilde E(x))dx- k^2 \int_{\Omega} \epsilon(x)|\tilde E(x)|^2dx 
\end{equation*}
\begin{equation}\label{4.10}
 -k^2 \int_{\Omega}(\epsilon(x)-1)\overline{E_0}(x)\cdot \tilde E(x) dx= 0.
\end{equation}
Hence, from the equations \eqref{4.8} and \eqref{4.10} we can get ,
 \begin{align}
& \int_{\Omega} |\curl\tilde E(x)|^2dx - k^2 \int_{\Omega}|\tilde E(x)|^2dx - k^2 \int_{\Omega}(\epsilon(x)-1)| E(x)|^2dx + \int_{\Omega}(\frac{1}{\mu(x)}-1)|\curl E(x)|^2dx \nonumber \\
& = -k^2 \int_{\Omega}(\epsilon(x)-1)| E_0(x)|^2dx + \int_{\Omega}(\frac{1}{\mu(x)}-1)|\curl E_0(x)|^2dx \nonumber \\
& + k^2 \int_{\Omega}\epsilon(x) |\tilde E(x)|^2dx -
 \int_{\Omega}\frac{1}{\mu(x)}|\curl\tilde E(x)|^2dx \nonumber \\
& = -\tau^{-1} I_\rho(\tau,t). \label{4.11}
 \end{align}
Combining \eqref{4.11} with the formula
\begin{equation*}
 |\curl\tilde E(x) |^2 + (\frac{1}{\mu(x)}-1)|\curl E(x) |^2 = \frac{1}{\mu(x)}|\curl E(x) - \mu(x)(\curl E_0)(x)|^2 + (1-\mu(x))|\curl E_0(x)|^2
\end{equation*}
we obtain
\begin{equation}\label{4.12}
 -\tau^{-1} I_{\rho}(\tau,t) \geq \int_{\Omega}(1-\mu(x))|\curl E_0(x)|^2dx - k^2 \int_{\Omega} |\tilde E(x)|^2dx -k^2 \int_{\Omega}(\epsilon(x)-1)| E(x)|^2dx.
\end{equation}
Finally, again from \eqref{4.11}, we have
\begin{equation}
 \tau^{-1}I_{\rho}(\tau,t) \geq \int_{D}(1-\frac{1}{\mu(x)})\vert \curl E_0(x)\vert^2dx - k^2\int_{\Omega}\epsilon(x)\vert \tilde E(x)\vert^2dx + k^2\int_D(\epsilon(x)-1)\vert E_0(x)\vert^2dx.
\end{equation}
\end{proof}
\subsection{Estimates of the lower order term $\tilde{E}$}
\begin{proposition}\label{pro1}
Assume that $\Omega$ is $C^1$-smooth and $D$ is a subset strictly included in $\Omega.$ Then there exist a positive constant $C $ independent of $\tilde E$ 
and $E_0$ and a positive constant $\delta$ depending only on $\Omega$ such that we have:
\begin{equation*}
 {\Vert \tilde E\Vert}_{L^2(\Omega)} \leq C \{ {\Vert \curl E_0\Vert}_{L^p(D)} + {\Vert E_0\Vert}_{L^2(D)}\}
\end{equation*}
for every $p$ in $(\max\{ \frac{4}{3},\frac{2+\delta}{1+\delta}\},2]$.
\end{proposition}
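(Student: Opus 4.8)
The plan is to use a duality argument, pairing the reflected field $\tilde E$ against an arbitrary $L^2$ datum and transferring the information onto the source of \eqref{IV}, which is supported in $D$ because $\frac{1}{\mu}-1$ and $\epsilon-1$ vanish on $\Omega\setminus\overline D$. Since $E\in H(\curl;\Omega)$ and $E_0$ is smooth, we already know $\tilde E\in L^2(\Omega)$, so I may write
\begin{equation*}
\|\tilde E\|_{L^2(\Omega)} = \sup\Big\{\Big|\int_\Omega\tilde E\cdot\overline{g}\,dx\Big| : g\in L^2(\Omega),\ \|g\|_{L^2(\Omega)}\leq 1\Big\}.
\end{equation*}
For such a $g$ I would introduce the solution $w\in H(\curl;\Omega)$ of the adjoint problem
\begin{equation*}
\curl\tfrac{1}{\mu}\curl w - k^2\epsilon w = g\ \ \text{in}\ \Omega,\qquad \nu\wedge w = 0\ \ \text{on}\ \partial\Omega,
\end{equation*}
which is well posed with $\|w\|_{H(\curl;\Omega)}\leq C\|g\|_{L^2(\Omega)}$, since $k$ is not an eigenvalue and the associated sesquilinear form is Hermitian ($\mu,\epsilon$ being real).

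First I would derive the key identity
\begin{equation*}
\int_\Omega\tilde E\cdot\overline{g}\,dx = -\int_D\Big(\tfrac{1}{\mu}-1\Big)\curl E_0\cdot\curl\overline{w}\,dx + k^2\int_D(\epsilon-1)E_0\cdot\overline{w}\,dx.
\end{equation*}
This follows by testing the weak formulation of \eqref{IV} with $w$, testing the adjoint equation with $\tilde E$, and subtracting: the symmetry of the form together with the boundary conditions $\nu\wedge\tilde E=\nu\wedge w=0$ makes the two principal (curl--curl) terms cancel, leaving only the source of \eqref{IV} tested against $w$, restricted to $D$.

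Then, applying H\"older's inequality with conjugate exponents $p$ and $p'=p/(p-1)$ to the first integral and Cauchy--Schwarz to the second, I obtain
\begin{equation*}
\Big|\int_\Omega\tilde E\cdot\overline{g}\,dx\Big| \leq C\|\curl E_0\|_{L^p(D)}\|\curl w\|_{L^{p'}(\Omega)} + C\|E_0\|_{L^2(D)}\|w\|_{L^2(\Omega)}.
\end{equation*}
Here $\|w\|_{L^2(\Omega)}\leq C\|g\|_{L^2(\Omega)}$ by the $L^2$-theory above, which already produces the desired $\|E_0\|_{L^2(D)}$ contribution. The crucial factor is $\|\curl w\|_{L^{p'}(\Omega)}$: since $p\leq 2$ forces $p'\geq 2$, I would invoke the global $L^p$ estimate for the curl of solutions of the Maxwell system from Section 4, applied to $w$ (the Lipschitz coefficient $\epsilon$ entering only as a harmless lower-order perturbation of the constant-coefficient setting there, so that $\curl\tfrac1\mu\curl w=g+k^2\epsilon w\in L^2(\Omega)$), to get $\|\curl w\|_{L^{p'}(\Omega)}\leq C\|g\|_{L^2(\Omega)}$. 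This higher-integrability gain is available exactly for $p'\leq 2+\delta$, i.e. $p\geq\frac{2+\delta}{1+\delta}$, while $p>4/3$ is forced by the Sobolev--Besov embeddings underlying that estimate. Taking the supremum over $g$ then yields the claimed bound on the range $p\in(\max\{4/3,\frac{2+\delta}{1+\delta}\},2]$.

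The heart of the matter, and the main obstacle, is precisely the control $\|\curl w\|_{L^{p'}(\Omega)}\leq C\|g\|_{L^2(\Omega)}$ with $p'>2$: this is the Meyers-type higher integrability for the curl of Maxwell solutions with the discontinuous coefficient $\mu\in L^\infty$, whose proof is the subject of Section 4, and it is there that the exponent gap $\delta=\delta(\Omega)$ and the floor $4/3$ are produced. Everything else---the duality reduction, the Green-type identity, and the two elementary H\"older estimates---is routine. I would also note that the hypothesis $\overline D\subset\subset\Omega$ keeps the source away from $\partial\Omega$, which is what allows the cancellation in the identity and the clean application of that interior higher-integrability estimate.
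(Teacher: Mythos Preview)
Your duality reduction is clean and the Green-type identity is correct, but the argument breaks at the crucial step: you need $\|\curl w\|_{L^{p'}(\Omega)}\leq C\|g\|_{L^2(\Omega)}$ with $p'>2$, and this is precisely the half of the Meyers-type estimate that the paper does \emph{not} prove. Theorem~\ref{5.6} in Section~4 establishes the $H_0^{1,p}(\curl;\Omega)$ estimate only for $p\in(\frac{2+\delta}{1+\delta},2]$, i.e.\ $p\leq 2$; the Remark immediately following it says explicitly that the case $p>2$ requires a different argument (\`a la Gr\"oger) and is deferred to future work. So the ``higher-integrability gain for $p'\leq 2+\delta$'' you invoke is not available from Section~4, and your proof has a genuine hole at its main step. (Relatedly, your explanation of the floor $4/3$ is off: in a duality scheme that constraint would not arise from the same Besov embedding, since you never need to lift $w$ from $L^p$ to $L^2$.)

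The paper circumvents this by a direct, non-dual decomposition. It writes the source as $\curl f+g$ with $f=-(\tfrac1\mu-1)\curl E_0$, $g=k^2(\epsilon-1)E_0$, and splits $\tilde E=U+\mathcal E$ where $U$ solves a coercive Maxwell problem with right-hand side $\curl f$ only. Theorem~\ref{5.6} then gives $\|U\|_{L^p}+\|\curl U\|_{L^p}\leq C\|f\|_{L^p}$ for $p\leq 2$ (no $p'>2$ needed), and the passage from $L^p$ back up to $L^2$ is done via the Sobolev--Besov embedding $B_{1/p}^{p,2}(\Omega)\hookrightarrow L^2(\Omega)$ (Property~\ref{pr7} plus \cite{T.Muramatu}), valid for $p>4/3$---this is where the $4/3$ threshold actually comes from. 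The remainder $\mathcal E$ has an $L^2$ right-hand side and is handled by the ordinary $H(\curl)$ theory. In short: the paper uses the $p\leq 2$ estimate on the \emph{primal} solution and then embeds upward, whereas your scheme would need the unavailable $p>2$ estimate on the \emph{adjoint} solution.
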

\begin{proof}
 Set $f := -(\frac{1}{\mu(x)}-1)\curl E_0$, $g := k^2 (\epsilon(x)- 1)E_0.$ Then the reflected solution $\tilde E$ satisfies
\begin{equation}\label{ne}
\begin{cases}
  &\curl (\frac{1}{\mu(x)}\curl \tilde E)-k^2 \epsilon(x)\tilde E = \curl f + g
\ \ \text{in}\ \Omega, \\
&\nu\wedge \tilde E=0 \ \ \text{on}\ \partial \Omega.
\end{cases}
\end{equation}
From the $L^p$-estimate, see Theorem \ref{5.6} in Section 4, the following problem 
\begin{equation}
\begin{cases}
  &\curl (\frac{1}{\mu(x)}\curl U)+ (\sup_{x\in\Omega}{\frac{1}{\mu(x)}}) U = \curl f
\ \ \text{in}\ \Omega, \\
&\nu\wedge U =0 \ \ \text{on}\ \partial \Omega,
\end{cases}
\end{equation}
has a unique solution in $H_{0}^{1,p}(\curl;\Omega)$ with the estimate
\begin{equation}\label{44.13}
 {\Vert U\Vert}_{L^p(\Omega)} + {\Vert \curl U\Vert}_{L^p(\Omega)} \leq C{\Vert f\Vert}_{L^p(\Omega)}
\end{equation}
for $p\in (\frac{2+\delta}{1+\delta},2]$ for some $\delta > 0$, depending only on $\Omega$.\\
We set $\mathcal{E} = \tilde E - U,$ then $\mathcal{E}$ satisfies
\begin{equation}\label{E}
\begin{cases}
&\curl (\frac{1}{\mu(x)}\curl \mathcal{E})- k^2 \epsilon(x)\mathcal{E} = (k^2 \epsilon(x) + \sup_{x\in\Omega}{\frac{1}{\mu(x)}}) U  + g
\ \ \text{in}\ \Omega, \\
&\nu\wedge \mathcal{E} =0 \ \ \text{on}\ \partial \Omega.
\end{cases}
\end{equation}
By the well-posedness of \eqref{E} in $H(\curl;\Omega)$, see \cite{Monk}, we obtain
\[
\|\mathcal{E}\|_{L^2(\Omega)} + \|\curl \mathcal{E}\|_{L^2(\Omega)}  \leq C\{\|U\|_{L^2(\Omega)} + \|g\|_{L^2(\Omega)}\}
\]
and in particular, we have for $p \leq 2$
\begin{equation}\label{44.14}
\|\mathcal{E}\|_{L^p(\Omega)} + \|\curl \mathcal{E}\|_{L^p(\Omega)}  \leq C\{\|U\|_{L^2(\Omega)} + \|g\|_{L^2(\Omega)}\}.
\end{equation}
We recall again the Sobolev-Besov space $B_{\frac{1}{p}}^{p,2}(\Omega).$
Then Property \ref{pr7} in Appendix implies that $U\in B_{\frac{1}{p}}^{p,2}(\Omega)$, for $1<p\leq 2$, since both $U$ and $\curl \; U$ are in $L^p(\partial \Omega)$ and 
$\nabla\cdot U = 0$ and $\nu\wedge U = 0$ on $\partial \Omega$.
As the inclusion map $B_{\frac{1}{p}}^{p,2}(\Omega) \rightarrow L^2(\Omega)$ is continuous for $p \in (\frac{4}{3} ,2]$, see [\cite{T.Muramatu}, Theorem 2],
we have the estimate
\begin{equation}\label{44.15}
 \|U\|_{L^2(\Omega)} \leq C \|U\|_{B_{\frac{1}{p}}^{p,2}(\Omega)}.
\end{equation}
Again since $\nabla\cdot U = 0$ in $\Omega$ and $\nu\wedge U = 0$ on $\partial \Omega$, then from Property \ref{pr7} in Appendix 
together with \eqref{44.15} we obtain
\begin{equation}\label{44.16}
 \|U\|_{L^2(\Omega)} \leq C \{ \|U\|_{L^p(\Omega)} + \|\curl U\|_{L^p(\Omega)}\}
\end{equation}
for $p \in (\frac{4}{3} ,2].$\\
Combining \eqref{44.13} with \eqref{44.14} and \eqref{44.16}, we obtain
\begin{equation}\label{44.17}
 \|\mathcal{E}\|_{L^p(\Omega)} + \|\curl \mathcal{E}\|_{L^p(\Omega)}  \leq  C\{ {\| f\|}_{L^p(\Omega)} + {\| g\|}_{L^2(\Omega)} \}
\end{equation}
for $p \in (\max\{\frac{4}{3}, \frac{2+\delta}{1+\delta} \},2].$\\
     Since $\tilde{E} = \mathcal{E} + U$, then from \eqref{44.13} and \eqref{44.17} we have
\begin{equation}\label{44.18}
\|\tilde E\|_{L^p(\Omega)} + \|\curl \tilde E \|_{L^p(\Omega)}  \leq  C\{ {\| f\|}_{L^p(\Omega)} + {\| g\|}_{L^2(\Omega)} \}.
\end{equation}
  Therefore, recalling again Property \ref{pr7} in Appendix A and combining it with the inequality \eqref{44.15} we have
\begin{equation}\label{lain}
\begin{split}
\|\tilde E\|_{L^2(\Omega)} 
&\leq C \|\tilde E\|_{B_{\frac{1}{p}}^{p,2}(\Omega)} \\
&  \leq C \{\|\tilde E\|_{L^p(\Omega)} + \|\curl \tilde E\|_{L^p(\Omega)} + \|\nabla\cdot\tilde{E}\|_{L^p(\Omega)}\},  \ \ \text{since}\ \tilde E\wedge\nu=0. 
\end{split}
\end{equation}
On the other hand from \eqref{ne}, we have $k^2\nabla\cdot(\epsilon\tilde E) = \nabla\cdot g$. 
Recall that $\nabla\cdot g = k^2\{ (\nabla(\epsilon-1))\cdot E_0 + (\epsilon-1)(\nabla\cdot E_0)\}$. However, $\nabla\cdot E_0 = 0$ then, we have
$\nabla\epsilon\cdot\tilde E + \epsilon(\nabla\cdot\tilde E) = \nabla\epsilon\cdot E_0$.
 Therefore as $\epsilon \in W^{1, \infty}(\Omega)$, we deduce the estimate
\begin{equation}\label{epsi}
 \|\nabla\cdot\tilde{E}\|_{L^p(\Omega)} \leq \frac{\|\nabla\epsilon\|_{L^{\infty}(D)}}{\|\epsilon\|_{L^{\infty}(D)}} \{\|\tilde E\|_{L^p(\Omega)} + \|E_0\|_{L^p(D)} \}.
\end{equation}
 Combining the inequalities \eqref{44.18}, \eqref{lain} and \eqref{epsi}, we obtain 
\[
\begin{split}
\|\tilde E\|_{L^2(\Omega)}
& \leq C \{\|\tilde E\|_{L^p(\Omega)} + \|\curl \tilde E\|_{L^p(\Omega)} + \|E_0\|_{L^p(D)}\} \\
& \leq C \{ {\| f\|}_{L^p(\Omega)} + {\| g\|}_{L^2(\Omega)} + \|E_0\|_{L^p(D)} \} \\
&\leq C \{ \|\curl E_0\|_{L^p(D)} + \|E_0\|_{L^2(D)}\}
\end{split}
\]
for $p\in (\max\{\frac{4}{3}, \frac{2+\delta}{1+\delta} \},2].$      
\end{proof}
Note that electric part of the CGO solution defined in \eqref{CGO-formImpeneq} is nothing but a multiplication by a constant of the magnetic
part of the CGO solution defined in \eqref{CGO-formPerTT}. 
Then, similarly the magnetic part of the CGO solution defined in \eqref{CGO-formImpeneq} is nothing but a multiplication by a constant of the electric
part of the CGO solution defined in \eqref{CGO-formPerTT}.
So, we have the following lemmas for the CGOs in \eqref{CGO-formPerTT} in the same way as we did in Lemma \ref{lemma3.9}, Lemma \ref{lemma3.10} and Lemma \ref{lemma3.11}.
\begin{lemma}\label{lemma3.9Pene}
For $ 1 \leq q < \infty $, The following estimates hold. 
\bigskip

\noindent 1.  
\begin{equation}\label{3.23P}
 \int_D {| {E_0(x)} |}^qdx \leq C{\tau}^{-1} \sum_{j=1}^{n}\iint_{{| y'|} < \delta} e^{-q\tau l_j(y')}dy' - \frac{C}{q}\tau^{-1} e^{-q \delta \tau}
+ C e^{-qc \tau}
\end{equation}
\bigskip
\noindent 2.
\begin{equation}\label{3.24P}
 \int_D {| {E_0(x)} |}^2dx \geq C{\tau}^{-1} \sum_{j=1}^{n}\iint_{{| y'|} < \delta} e^{-2\tau l_j(y')}dy' - \frac{C}{2}\tau^{-1} e^{-2 \delta \tau}
\end{equation}
\bigskip
\noindent 3.
\begin{equation}\label{3.25P}
 \int_D {| {H_0(x)} |}^qdx \leq C{\tau}^{q-1} \sum_{j=1}^{n}\iint_{{| y'|} < \delta} e^{-q\tau l_j(y')}dy' - \frac{C}{q}\tau^{q-1} e^{-q \delta \tau}
+ C \tau^q e^{-qc \tau}
\end{equation}
\bigskip
\noindent 4.
\begin{equation}\label{3.26P}
 \int_D {| {H_0(x)} |}^2dx \geq C{\tau} \sum_{j=1}^{n}\iint_{{| y'|} < \delta} e^{-2\tau l_j(y')}dy' - \frac{C}{2}\tau e^{-2 \delta \tau}.
\end{equation}
\end{lemma}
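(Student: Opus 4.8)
The plan is to transcribe the proof of Lemma \ref{lemma3.9}, using that for the penetrable CGO solutions \eqref{CGO-formPerTT} the amplitudes obey $\eta=\mathcal{O}(1)$ and $\theta=\mathcal{O}(\tau)$, which is the exact reverse of the impenetrable situation. Thus $E_0$ here plays the role of the $\mathcal{O}(1)$ field $H_0$ from Lemma \ref{lemma3.9} (yielding points 1 and 2), while $H_0$ here plays the role of the $\mathcal{O}(\tau)$ field $\curl H_0=-ikE_0$ from that lemma (yielding points 3 and 4). Because the exponential weight $e^{\tau(x\cdot\rho-t)}$, the geometric decomposition $D=D_\delta\cup(D\setminus D_\delta)$, and the change of coordinates \eqref{3.27} are all identical, the argument carries over verbatim once the amplitude sizes are accounted for.

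The first task is to record the two-sided amplitude bounds $c\leq|\eta|\leq C$ and $c\tau\leq|\theta|\leq C\tau$ for $\tau\gg1$. With the penetrable choice $a\perp\rho$, $a\perp\rho^\perp$, $b=\bar{\hat\zeta}$ in \eqref{form-eta} one has $\zeta\cdot a=0$ and $\zeta\cdot b=|\zeta|$, so that
\[
\eta=\frac{1}{|\zeta|}\bigl(-k\,\zeta\wedge b+k^2a\bigr),\qquad \theta=\frac{1}{|\zeta|}\bigl(k\,\zeta\wedge a-|\zeta|\zeta+k^2b\bigr).
\]
Since $|\zeta|=\sqrt{2\tau^2+k^2}$ and $\zeta\wedge\bar\zeta=-2i\tau\sqrt{\tau^2+k^2}\,(\rho\wedge\rho^\perp)$, the leading behaviour is $\eta\to ik(\rho\wedge\rho^\perp)$, of modulus $k$, and $\theta=-\zeta+\mathcal{O}(1)$, of modulus $\sim\sqrt{2}\,\tau$. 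Only the lower bounds require this leading-order computation; the upper bounds are the $\mathcal{O}(1)$ and $\mathcal{O}(\tau)$ statements already recorded after \eqref{CGO-formPerTT}.

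For points 1 and 2, taking $t=h_D(\rho)$ one writes
\[
\int_D|E_0(x)|^q\,dx=\int_D|\eta|^q\,e^{-q\tau(h_D(\rho)-x\cdot\rho)}\,dx,
\]
splits $D=D_\delta\cup(D\setminus D_\delta)$, handles the outer piece by the bound $\int_{D\setminus D_\delta}e^{-q\tau(h_D(\rho)-x\cdot\rho)}dx=\mathcal{O}(e^{-qc\tau})$ (added as an error in the upper estimate 1, simply dropped in the lower estimate 2), and on $D_\delta$ applies \eqref{3.27} together with the Lipschitz graphs $l_j(y')$. The inner integral $\int_{l_j(y')}^{\delta}e^{-q\tau y_3}\,dy_3=\frac{1}{q\tau}\bigl(e^{-q\tau l_j(y')}-e^{-q\delta\tau}\bigr)$, combined with $c\leq|\eta|\leq C$, gives the upper bound 1 and the lower bound 2 exactly as stated. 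Points 3 and 4 are the same computation with $|\theta|^q$ in place of $|\eta|^q$; since $c\tau\leq|\theta|\leq C\tau$, each factor $\tau^{-1}$ from points 1 and 2 acquires an extra $\tau^q$, producing $\tau^{q-1}$ in point 3 and $\tau$ in point 4.

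The computation has no genuine obstacle beyond bookkeeping; the one substantive ingredient is the lower amplitude bound $|\theta|\geq c\tau$ (and $|\eta|\geq c$), which is precisely what makes the lower estimates 2 and 4 nontrivial and which rests on the explicit penetrable choice of $a$ and $b$ in \eqref{form-eta}. Everything else is a direct transcription of the impenetrable argument of Lemma \ref{lemma3.9}.
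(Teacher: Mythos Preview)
Your proposal is correct and follows essentially the same approach as the paper: the paper simply remarks that the penetrable CGO solutions in \eqref{CGO-formPerTT} are, up to constants, the impenetrable ones from \eqref{CGO-formImpeneq} with the roles of $E_0$ and $H_0$ interchanged, and then states that Lemma~\ref{lemma3.9Pene} follows ``in the same way'' as Lemma~\ref{lemma3.9}. You actually go further than the paper by explicitly verifying the lower amplitude bounds $|\eta|\to k$ and $|\theta|\sim\sqrt{2}\,\tau$ needed for points~2 and~4, which the paper leaves implicit.
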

\begin{lemma}\label{lemma3.10Pene}
We have the following estimate
\begin{equation*}
 \frac{\|H_0\|_{L^2(D)}^{2}}{\|E_0\|_{L^2(D)}^{2}} \geq \mathcal{O}(\tau^{2}), \ \ \tau\gg1. 
\end{equation*}
\end{lemma}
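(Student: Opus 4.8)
The plan is to reproduce the computation of Lemma~\ref{lemma3.10}, using the observation recorded just before Lemma~\ref{lemma3.9Pene}: in the penetrable setting the electric part $E_0$ takes over the role that $H_0$ had in the impenetrable case (both are $\mathcal{O}(1)$ in $\tau$), whereas $H_0$ now behaves like $\curl H_0$ did there (both are $\mathcal{O}(\tau)$). Thus, to bound the ratio from below, I would combine the $\tau$-gaining lower bound \eqref{3.26P} for $\|H_0\|_{L^2(D)}^2$ with the $\tau$-losing upper bound \eqref{3.23P} (taken at $q=2$) for $\|E_0\|_{L^2(D)}^2$. Abbreviating $S:=\sum_{j=1}^n\iint_{|y'|<\delta}e^{-2\tau l_j(y')}\,dy'$, this gives
\[
\frac{\|H_0\|_{L^2(D)}^2}{\|E_0\|_{L^2(D)}^2}
\geq
\frac{C\tau\,S-\tfrac{C}{2}\tau e^{-2\delta\tau}}
{C\tau^{-1}S-\tfrac{C}{2}\tau^{-1}e^{-2\delta\tau}+Ce^{-2c\tau}}.
\]

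The decisive ingredient is then the lower bound \eqref{lower_v}, namely $S\geq C\tau^{-2}$, which holds because $l_j(y')\leq C|y'|$ for a Lipschitz boundary $\partial D$. This shows that $S$ dominates the exponentially decaying remainders: dividing through by $S$, each correction term takes the form $\mathcal{O}(\tau^a e^{-c'\tau})/S=\mathcal{O}(\tau^{a+2}e^{-c'\tau})\to0$. Factoring $S$ out of numerator and denominator I obtain
\[
\frac{\|H_0\|_{L^2(D)}^2}{\|E_0\|_{L^2(D)}^2}
\geq C\tau^2\,
\frac{1-\tfrac{C}{2}\,e^{-2\delta\tau}/S}
{1-\tfrac{C}{2}\,e^{-2\delta\tau}/S+C\tau\,e^{-2c\tau}/S},
\]
and the fraction on the right tends to $1$ as $\tau\to\infty$. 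Hence the ratio is bounded below by $c\tau^2$ for $\tau\gg1$, which is exactly the asserted estimate $\mathcal{O}(\tau^2)$.

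I do not anticipate a genuine obstacle: the argument is entirely parallel to Lemma~\ref{lemma3.10} and reduces to the same bookkeeping. The only two points that require attention are (i) selecting the correct pair of estimates from Lemma~\ref{lemma3.9Pene}---the $\tau$-gaining lower bound for $H_0$ against the $\tau$-losing upper bound for $E_0$---so that the powers of $\tau$ combine into $\tau^2$ rather than $\tau^{-2}$; and (ii) checking, via \eqref{lower_v}, that the exponential remainders $e^{-2\delta\tau}$ and $e^{-2c\tau}$ are swallowed by the leading term $S\asymp\tau^{-2}$, which is what legitimizes discarding them in the limit.
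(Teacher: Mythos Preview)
Your proposal is correct and follows exactly the route the paper intends: the paper does not write out a separate proof for Lemma~\ref{lemma3.10Pene} but refers back to Lemma~\ref{lemma3.10}, and your argument is precisely that computation with the roles of $H_0$ and $E_0$ swapped according to the remark preceding Lemma~\ref{lemma3.9Pene}. The pairing of \eqref{3.26P} with \eqref{3.23P} at $q=2$ and the use of \eqref{lower_v} to kill the exponential remainders match the paper's proof of Lemma~\ref{lemma3.10} line for line.
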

\begin{lemma}\label{lemma3.11Pene}
If $t = h_D(\rho),$ then for some positive constant C,
\begin{equation*}
 \liminf_{\tau\rightarrow \infty} \int_{D}\tau |\curl E_0(x)|^2 dx \geq C.
\end{equation*}
\end{lemma}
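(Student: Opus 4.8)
The plan is to mirror exactly the argument used for Lemma~\ref{lemma3.11} in the impenetrable case, exploiting the symmetry between the two families of CGO solutions that the preceding paragraph records. The essential observation is that the pair $(E_0,H_0)$ satisfies the free Maxwell system \eqref{CGO-linear-Maxwell}, so that $\curl E_0 = ik H_0$ holds pointwise; hence $|\curl E_0(x)|^2 = k^2|H_0(x)|^2$ and the whole question reduces to a lower bound for $\int_D |H_0(x)|^2\,dx$ of the right order in $\tau$. Since in the penetrable normalization $\theta = \mathcal{O}(\tau)$, the density $|H_0|^2$ carries an extra factor $\tau^2$ relative to the impenetrable case, which is precisely why the bound will come out with one power of $\tau$ rather than $\tau^{-1}$.

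First I would use $\curl E_0 = ik H_0$ to write $\int_D |\curl E_0|^2\,dx = k^2 \int_D |H_0|^2\,dx$, and then apply point~4 of Lemma~\ref{lemma3.9Pene} to obtain
\[
\int_D |\curl E_0(x)|^2\,dx \geq C\tau \sum_{j=1}^n \iint_{|y'|<\delta} e^{-2\tau l_j(y')}\,dy' - \frac{C}{2}\tau e^{-2\delta\tau}.
\]
Next I would invoke the Lipschitz regularity of $\partial D$ through the bound $l_j(y') \leq C|y'|$, already used in \eqref{lower_v}, to replace $e^{-2\tau l_j(y')}$ from below by $e^{-2\tau|y'|}$, and then perform the change of variables $y' \mapsto y'/\tau$ to extract the volume scaling: this converts $\tau \iint_{|y'|<\delta} e^{-2\tau|y'|}\,dy'$ into $\tau\cdot\tau^{-2}\iint_{|y'|<\tau\delta} e^{-2|y'|}\,dy' = \mathcal{O}(\tau^{-1})$, the remaining integral converging to a fixed positive constant as $\tau\to\infty$. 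The exponentially small boundary term $\tau e^{-2\delta\tau}$ is negligible against $\tau^{-1}$, so for $\tau\gg1$ one reaches $\int_D |\curl E_0|^2\,dx \geq C\tau^{-1}$, whence $\int_D \tau |\curl E_0|^2\,dx \geq C$ and the claimed $\liminf$ bound follows.

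There is no genuine obstacle here: the content is entirely inherited from Lemma~\ref{lemma3.9Pene} and the pointwise Maxwell identity, and the argument is a direct transcription of the proof of Lemma~\ref{lemma3.11} with the roles of the electric and magnetic CGO fields interchanged. The only point requiring minimal care is the bookkeeping of the powers of $\tau$: one must track that the single factor $\tau$ produced by $|H_0|^2 = \mathcal{O}(\tau^2)\,e^{-2\tau(h_D(\rho)-x\cdot\rho)}$ (after the $\tau^{-1}$ from the layer integration in Lemma~\ref{lemma3.9Pene}) survives the rescaling to leave a net $\tau^{-1}$ lower bound, matching the $\tau^{-1}$ that the statement multiplies away.
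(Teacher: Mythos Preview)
Your proposal is correct and follows exactly the approach the paper intends: the paper does not write out a separate proof for Lemma~\ref{lemma3.11Pene} but simply states that it follows ``in the same way'' as Lemma~\ref{lemma3.11}, using the role-swap between the electric and magnetic parts of the two CGO families. Your use of $\curl E_0 = ikH_0$, the lower bound from point~4 of Lemma~\ref{lemma3.9Pene}, the Lipschitz estimate $l_j(y')\leq C|y'|$, and the rescaling $y'\mapsto y'/\tau$ reproduces the paper's argument for Lemma~\ref{lemma3.11} verbatim with $E_0$ and $H_0$ interchanged.
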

 \begin{lemma}\label{lemma4.3}
  For  $p\in (\max\{\frac{4}{3}, \frac{2+\delta}{1+\delta} \},2]$, we have the following estimates
\begin{equation}\label{compare}
 \frac{\|\tilde E\|_{L^2(\Omega)}^2}{\|\curl E_0\|_{L^2(D)}^2} \leq C\tau^{1-\frac{2}{p}} \ \ (\tau \gg 1).
\end{equation}
 \end{lemma}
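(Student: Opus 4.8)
The plan is to derive this estimate directly from Proposition \ref{pro1}, which already bounds the reflected solution by the CGO data. Squaring the conclusion of Proposition \ref{pro1} and dividing through by $\|\curl E_0\|_{L^2(D)}^2$, I would reduce the claim to controlling the two ratios
\begin{equation*}
\frac{\|\tilde E\|_{L^2(\Omega)}^2}{\|\curl E_0\|_{L^2(D)}^2} \leq C\left\{ \frac{\|\curl E_0\|_{L^p(D)}^2}{\|\curl E_0\|_{L^2(D)}^2} + \frac{\|E_0\|_{L^2(D)}^2}{\|\curl E_0\|_{L^2(D)}^2}\right\}.
\end{equation*}
The key observation, which ties both ratios to the magnetic-field estimates already in hand, is that the CGO pair $(E_0,H_0)$ solves the free Maxwell system \eqref{CGO-linear-Maxwell}, so that $\curl E_0 = ikH_0$ and hence $\|\curl E_0\|_{L^q(D)} = k\|H_0\|_{L^q(D)}$ for every $q$. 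Thus the two ratios become $\|H_0\|_{L^p(D)}^2/\|H_0\|_{L^2(D)}^2$ and $\|E_0\|_{L^2(D)}^2/(k^2\|H_0\|_{L^2(D)}^2)$ respectively, both of which are already estimated by the lemmas of this section.

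For the first ratio, I would repeat \emph{verbatim} the computation of Lemma \ref{neiii}, now using points 3 and 4 of Lemma \ref{lemma3.9Pene} (the upper bound for $\|H_0\|_{L^q(D)}$ and the lower bound for $\|H_0\|_{L^2(D)}$ in the penetrable case) together with the geometric lower bound \eqref{lower_v} and the Hölder inequality with exponent $2/p$. This yields
\begin{equation*}
\frac{\|\curl E_0\|_{L^p(D)}^2}{\|\curl E_0\|_{L^2(D)}^2} = \frac{\|H_0\|_{L^p(D)}^2}{\|H_0\|_{L^2(D)}^2} \leq C\tau^{1-\frac{2}{p}}, \quad \tau \gg 1.
\end{equation*}
For the second ratio, I would invoke Lemma \ref{lemma3.10Pene}, which gives $\|H_0\|_{L^2(D)}^2/\|E_0\|_{L^2(D)}^2 \geq \mathcal{O}(\tau^2)$, so that $\|E_0\|_{L^2(D)}^2/\|\curl E_0\|_{L^2(D)}^2 = \mathcal{O}(\tau^{-2})$.

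Finally I would combine the two bounds: the ratio of interest is at most $C(\tau^{1-2/p} + \tau^{-2})$. Since $p \leq 2$ forces $1-\frac{2}{p} \geq -\frac12 > -2$, the term $\tau^{-2}$ is dominated by $\tau^{1-2/p}$ for $\tau \geq 1$, giving the claimed $C\tau^{1-2/p}$. I do not anticipate a genuine obstacle here; the only point requiring care is the bookkeeping in transferring Lemma \ref{neiii} to the penetrable CGOs, and the clean way to handle it is precisely the identity $\curl E_0 = ikH_0$, which lets me reuse the magnetic estimates without recomputing the phase integrals.
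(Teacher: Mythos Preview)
Your proposal is correct and follows essentially the same route as the paper: start from Proposition~\ref{pro1}, bound the first ratio by the computation of Lemma~\ref{neiii} transferred to the penetrable CGOs, bound the second ratio by Lemma~\ref{lemma3.10Pene}, and absorb $\tau^{-2}$ into $\tau^{1-2/p}$. Your explicit use of the identity $\curl E_0 = ikH_0$ is a nice clarification of why the argument of Lemma~\ref{neiii} carries over verbatim, which the paper leaves implicit by simply writing ``similarly as in the proof of Lemma~\ref{neiii}''.
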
 
\begin{proof}
From Proposition \ref{pro1}, we have 
\begin{equation*}
\|\tilde E\|_{L^2(\Omega)} \leq C\{ \|\curl E_0\|_{L^p(D)} + \|E_0\|_{L^2(D)} \}.
\end{equation*}
Similarly as in the proof of Lemma \ref{neiii}, we obtain 
\begin{equation}\label{SIIIIMIla}
 \frac{\|\curl E_0\|_{L^p(D)}^{2}}{\|\curl E_0\|_{L^2(D)}^{2}} \leq C \tau^{1-\frac{2}{p}}, \ \ \tau\gg1.
\end{equation}
for all $p\leq2$.
Therefore combining Lemma \ref{lemma3.10Pene} and \eqref{SIIIIMIla}, we obtain
 \begin{align*}
 \frac{\|\tilde E\|_{L^2(\Omega)}^2}{\|\curl E_0\|_{L^2(D)}^2} 
& \leq C\left[\frac{\|\curl E_0\|_{L^p(D)}^2}{\|\curl E_0\|_{L^2(D)}^2} 
+ \frac{\|E_0\|_{L^2(D)}^2}{\|\curl E_0\|_{L^2(D)}^2}\right] \\
&  \leq C\{ \tau^{1-\frac{2}{p}} + \tau^{-2} \} \\
&  \leq C\tau^{1-\frac{2}{p}} \ \ (\tau \gg 1).
 \end{align*}
\end{proof}
\textbf{End of the proof of Theorem \ref{theorem1}}\\
\textbf{Case 1.} $1-\mu(x)>C>0$.\\
 From the first inequality in Lemma \ref{lemma4.1} we have
\begin{align*}
 -I_{\rho}(\tau,t) 
&\geq \tau\int_{D}(1-\mu(x))\vert \curl E_0(x)\vert^2 dx- \tau C \int_{\Omega} \vert \tilde E(x)\vert^2 dx - \tau C\int_{D} \vert E_0(x)\vert^2 dx \\
& \geq C \tau\int_{D}\vert \curl E_0(x)\vert^2 dx- \tau C \int_{\Omega} \vert \tilde E(x)\vert^2 dx - \tau C\int_{D} \vert E_0(x)\vert^2 dx. 
\end{align*}
Using the inequality in Lemma \ref{lemma4.3} and choosing $p$ in $(\max\{\frac{4}{3}, \frac{2+\delta}{1+\delta}\},2)$ we obtain
\begin{align*} 
\frac{-I_{\rho}(\tau,t)}{{\Vert \curl E_0\Vert}_{L^2(D)}^{2}} 
&\geq C{\tau}\left[ 1 - \frac{\int_{\Omega}{\vert \tilde E(x)\vert}^2dx}{\int_{D}{\vert \curl E_0(x) \vert}^2dx} - \frac{\int_{D} \vert E_0(x)\vert^2 dx}{\int_{D}{\vert \curl E_0(x) \vert}^2dx}\right] \\
& \geq C \tau \{ 1 - \tau^{1-\frac{2}{p}} - 2\tau^{-2}\}. 
\end{align*}
Hence, using Lemma \ref{lemma3.11Pene} we deduce that for $\tau >> 1,$
\begin{equation*}
  \vert I_{\rho}(\tau,h_D(\rho))\vert \geq C >0
\end{equation*}
which ends the proof.  \\   
\textbf{Case 2.} $1-\mu(x)<-C<0$.\\
Similarly, from the second inequality in Lemma \ref{lemma4.1} we obtain
\begin{equation*}
 \tau^{-1}I_{\rho}(\tau,t) \geq \int_{D}(1-\frac{1}{\mu(x)})\vert \curl E_0(x)\vert^2dx - k^2\int_{\Omega}\vert \tilde E(x)\vert^2dx + C \int_{D} \vert E_0(x)\vert^2 dx.
\end{equation*}
Then using the same argument as in \textbf{Case 1} we end the proof.
 \section{An $L^p$-type estimate for the solutions of the Maxwell system}
 Let us consider a general time harmonic Maxwell systems of equations of the form
\begin{equation}\label{The-general-model}
 \curl (A(x)\curl E(x)) + M E(x) = \curl f(x) + g(x)
\end{equation}
where $A$ is a Hermitian matrix, with measurable entries, satisfying the uniformly ellipticity condition, i.e, there exists positive constants $\lambda,$
$ M $ such that
\begin{equation}\label{uni}
 \lambda {\vert \xi\vert}^2 \leq A(x)\xi \cdot \overline{\xi}  \leq M {\vert \xi\vert}^2
\end{equation}
for all $\xi \in {\mathbb C}^3$ and for almost every $x$ in $\Omega$. 
 Let $\Omega $ be a bounded and $C^1$ smooth domain. We recall that the space 
$
 H_0(\curl;\Omega) =\{ E\in L^2(\Omega) : \curl E \in L^2(\Omega), \nu \wedge E = 0 \hspace{.15cm} \mbox{on} \hspace{.15cm}\partial \Omega\}
$
is a Banach space under the norm 
$
 {\Vert E\Vert}_{H_0(\curl;\Omega)} := {\Vert E\Vert}_{L^2(\Omega)} + {\Vert \curl E\Vert}_{L^2(\Omega)}.
$
 Now, we define the space ${H_{0}^{1,q}}(\curl;\Omega)$ as
$
 {H_{0}^{1,q}}(\curl;\Omega) :=\{ E\in L^q(\Omega) : \curl E \in L^q(\Omega), \nu \wedge E = 0 \hspace{.15cm} \mbox{on} \hspace{.15cm}\partial \Omega\}
$
for $1 < q < \infty.$
The norm of this space is defined as 
$
 {\Vert E\Vert}_{{H_{0}^{1,q}}(\curl;\Omega)} := {\Vert E\Vert}_{L^q(\Omega)} + {\Vert \curl E\Vert}_{L^q(\Omega)}
$
and an equivalent norm is given by
$
 {\Vert E\Vert}_{1,q} := ({\Vert E\Vert}_{L^q(\Omega)}^{q} + {\Vert \curl E\Vert}_{L^q(\Omega)}^{q})^{\frac{1}{q}}.
$
Under this second norm, ${H_{0}^{1,q}}(\curl;\Omega)$ is also a Banach space.
\bigskip

The object of this section is to prove that the solution operator corresponding to the problem given by (\ref{The-general-model})
in ${H_{0}^{1,p}}(\curl;\Omega)$ is invertible for $p$ near $2$. An idea to prove this property is to use a perturbation argument.
Precisely, first we show that in the case $A=I$, $I$ is the identity matrix, and $M=1$, this property is true for $p$ in an interval containing
 $2$ (in our case this interval is $(1, \infty)$). Then using an equivalent variational formulation
 of (\ref{The-general-model}) in ${H_{0}^{1,p}}(\curl;\Omega)$ and a perturbation argument, we aim at proving the same property for
a general matrix $A$ but for $p$ near $2$. In the scalar divergence form elliptic problems, these arguments have been successfully 
applied. There are several methods to justify this perturbation argument. We cite the original one by Meyers, 
see \cite{Me}, which works for linear and complex perturbations , i.e. $A$ is a linear matrix with possibly complex entries. 
We cite also the method by Gr{\"o}ger, see \cite{Groe}, which works for real valued and nonlinear perturbations of the leading term but justified only for 
$p\geq 2$. Recently the argument of Gr{\"o}ger has been generalized to the elasticity system, see \cite{HMW}. Here, we generalize the method by Meyers to the Maxwell system to deal with linear perturbations but allowing $p$ to be near $2$ and $p\leq 2$
, see Theorem \ref{5.6}. It is this last property that is needed in our analysis in Section 3. 
In \cite{HMW}, other perturbative methods are also discussed.
\bigskip

To start, we  define the bilinear form
\begin{equation*}
 \mathcal{B}_{A}(E,F) := \int_{\Omega} A(x) \curl E(x) \cdot \overline{\curl F(x)}dx +  
M\int_{\Omega} E(x) \cdot \overline{F(x)}dx,
\end{equation*}
for all $
 E\in {H_{0}^{1,q}}(\curl;\Omega)$ and $F\in{H_{0}^{1,{q'}}}(\curl;\Omega),
$
 where $\frac{1}{q} + \frac{1}{q'} = 1.$
 For $A(x)= I, $ we set $\mathcal{B} := \mathcal{B}_{A}$.
 Consider the problem
\begin{equation}\label{57}
 \curl (A(x)\curl E(x)) + M E(x) = \curl f(x) + g(x) \hspace{.25cm} \text{on} \hspace{.25cm} \Omega
\end{equation}
where  $f $ and $ g$ are in $L^q(\Omega),$ $q\in (1, +\infty).$ 
Recall that $E\in {H_{0}^{1,q}}(\curl;\Omega)$ is a weak solution of the equation \eqref{57} if we have
\begin{equation}\label{58}
 \mathcal{B}_{A}(E,F) = \int_{\Omega} f(x)\cdot \overline{\curl F(x)} dx + \int_{\Omega} g(x)\cdot \overline{F(x)}dx
\end{equation}
for all $F \in {H_{0}^{1,q'}}(\curl;\Omega)$.\\
Dividing by $M$ in both sides of \eqref{uni}, we reduce our study to the case $M = 1,$ i.e. we have
\begin{equation}\label{unii}
 \lambda {\vert \xi\vert}^2 \leq A(x)\xi \cdot \overline{\xi}  \leq {\vert \xi\vert}^2.
\end{equation}
\begin{subsection}{\textbf{The imperturbed problem}}
\end{subsection}
\begin{theorem}\label{Lemma5.2}
Let $\Omega$ be a bounded $C^1$ domain in $\mathbb{R}^3$. Then for $f \in L^p(\Omega)$ and $g \in L^p(\Omega)$ with $1 < p< \infty$, 
the boundary value problem 
\begin{equation}\label{V}
\begin{cases}
&\curl \curl E + E = \curl f + g
\ \ \text{in}\ \Omega,\\
&\nu \wedge E = 0 \ \ \text{on}\ \partial \Omega,
\end{cases}
\end{equation}
is uniquely weakly solvable and there exists $C = C(p,k,\Omega) > 0$ such that,
\begin{equation*}
 {\Vert E\Vert}_{L^p(\Omega)} + {\Vert \curl E\Vert}_{L^p(\Omega)} \leq C \{{\Vert f\Vert}_{L^p(\Omega)} +{\Vert g\Vert}_{L^p(\Omega)}\}.
\end{equation*}
\end{theorem}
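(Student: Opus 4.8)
The plan is to treat the model case $A=I$, $M=1$ in three stages: first settle $p=2$ by a variational (Lax--Milgram) argument, then reduce the full system to a componentwise modified vector Helmholtz equation by a Hodge-type subtraction of a gradient, and finally obtain the $L^p$ bound from Calder\'on--Zygmund / layer-potential estimates on the $C^1$ domain together with a Meyers-type duality to cover $p<2$. For $p=2$ the bilinear form $\mathcal B(E,F)=\int_\Omega \curl E\cdot\overline{\curl F}\,dx+\int_\Omega E\cdot\overline F\,dx$ is bounded and coercive on $H_0(\curl;\Omega)$, since $\mathcal B(E,E)=\|E\|_{H(\curl;\Omega)}^2$, while $F\mapsto\int_\Omega f\cdot\overline{\curl F}+\int_\Omega g\cdot\overline F$ is a bounded antilinear functional; Lax--Milgram then yields a unique $E\in H_0(\curl;\Omega)$ solving the weak form with $\|E\|_{H(\curl)}\le C(\|f\|_{L^2}+\|g\|_{L^2})$.

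For general $p$, the first step is to extract the divergence. Testing the weak formulation with gradients $\nabla\psi$, $\psi\in W_0^{1,p'}(\Omega)$ (for which $\curl\nabla\psi=0$ and $\nu\wedge\nabla\psi=0$), gives $\div E=\div g$ distributionally. Let $\pi$ solve the Dirichlet problem $\Delta\pi=\div g$ in $\Omega$, $\pi=0$ on $\partial\Omega$; the classical $L^p$ theory for the Laplacian on a $C^1$ domain (valid for all $1<p<\infty$) gives $\|\nabla\pi\|_{L^p(\Omega)}\le C\|g\|_{L^p(\Omega)}$, since $\div g\in W^{-1,p}(\Omega)$ with norm $\le\|g\|_{L^p}$. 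Setting $v:=E-\nabla\pi$ one has $\div v=\div g-\Delta\pi=0$, $\curl v=\curl E$, and $\nu\wedge v=0$ on $\partial\Omega$ (because $\pi$ is constant on $\partial\Omega$). Using $\curl\curl v=-\Delta v+\nabla\div v=-\Delta v$ one obtains
\[
-\Delta v+v=\curl f+(g-\nabla\pi)\quad\text{in }\Omega,\qquad \nu\wedge v=0,\ \ \div v=0,
\]
with $\|g-\nabla\pi\|_{L^p}\le C\|g\|_{L^p}$ and $g-\nabla\pi$ solenoidal. It therefore suffices to bound $\|v\|_{L^p}+\|\curl v\|_{L^p}$ by $\|f\|_{L^p}+\|g\|_{L^p}$, after which $E=v+\nabla\pi$ and $\curl E=\curl v$ deliver the claim.

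For the reduced problem I would invoke the $L^p$ Calder\'on--Zygmund theory for $-\Delta+I$ under the boundary conditions $\nu\wedge v=0$, $\div v=0$ on a $C^1$ domain, which holds for every $1<p<\infty$. Concretely, one represents $v$ through volume and layer potentials built from the Yukawa kernel: this is exactly the imaginary-wavenumber ($k=i$) Maxwell system, for which $k$ is never a Maxwell eigenvalue, so the boundary integral operators of the type $-\tfrac12 I+M_k$ are invertible on $X^{-1/p,p}_{\partial\Omega}$ for all such $p$ (cf. Lemma \ref{lemma3.4} and the mapping properties recalled in the Appendix). The first-order source $\curl f$ is never actually differentiated: it enters the weak formulation only through the pairing $\int_\Omega f\cdot\overline{\curl F}$, so its contribution is controlled by $\|f\|_{L^p}$ via the $W^{1,p'}$ bound for the adjoint solution operator.

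Uniqueness for $p\ge2$ is immediate: on the bounded domain a solution of the homogeneous problem in $H_0^{1,p}(\curl;\Omega)$ lies in $H_0(\curl;\Omega)$, and the $p=2$ coercivity forces $E=0$. For $p<2$ both existence and uniqueness follow by the Meyers duality between the problem at exponent $p$ and its adjoint at $p'>2$, which has already been solved. I expect the genuine obstacle to be precisely this full-range $L^p$ boundary estimate for the reduced elliptic system on a merely $C^1$ domain: the Calder\'on--Zygmund / layer-potential analysis carries all the analytic weight, whereas the gradient subtraction, the $p=2$ step and the duality are essentially bookkeeping.
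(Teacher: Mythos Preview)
Your argument is essentially correct but takes a considerably longer road than the paper. The paper's proof is two lines: the equation $\curl\curl E+E=\curl f+g$ is the first-order Maxwell system at $k=i$ (set $H:=f-\curl E$, $K:=f$, $J:=-g$), and since the problem is self-adjoint, $k=i$ is never a Maxwell eigenvalue; then Property~\ref{pr2} (Theorem~11.6 of \cite{Mitrea}) gives existence, uniqueness, and the $L^p$ bound directly for every $1<p<\infty$ on a $C^1$ domain. Your Hodge subtraction of $\nabla\pi$ is valid but unnecessary, because Property~\ref{pr2} already accepts arbitrary $L^p$ sources $K,J$ in the first-order formulation, so there is no need to make $E$ divergence-free before invoking the layer-potential machinery. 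Likewise, your Meyers-type duality to pass from $p'>2$ to $p<2$ is superfluous here: the cited result covers the full range in one stroke.

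One concrete mis-citation: Lemma~\ref{lemma3.4} is stated for a \emph{Lipschitz} obstacle $D$ and only yields invertibility of $-\tfrac12 I+M_k$ on $X^{-1/p,p}_{\partial D}$ for $2-\delta\le p\le 2+\delta$; it does not give the full range $1<p<\infty$ that you need. What actually delivers the full range on a $C^1$ domain is precisely the result you end up gesturing at---the $L^p$ Maxwell theory of Mitrea--Mitrea--Pipher---which the paper packages as Property~\ref{pr2}. So your sketch is sound in structure, but once you recognize the problem as the $k=i$ Maxwell system you should cite Property~\ref{pr2} directly rather than Lemma~\ref{lemma3.4}, and the preceding reduction steps can be dropped.
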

\begin{proof}
 Since the problem related to this Maxwell system is self-adjoint, then $k= i$ is not a Maxwell eigenvalue. 
Therefore the rest of the proof follows from Property \ref{pr2} in Appendix. 
\end{proof}
Based on Theorem \ref{Lemma5.2}, we can define the linear transformation $T_I$ as follows
\begin{equation*}
T_I : L^p(\Omega) \times L^p(\Omega) \rightarrow  L^p(\Omega) \times L^p(\Omega) \hspace{.15cm} by
\end{equation*}
\begin{equation}
(f,g)\mapsto (\curl E, E) \ \ \text{(solution of the problem {\eqref{V}})}. 
\end{equation}
We denote its norm by ${\Vert T_I\Vert}_p.$ The next two lemmas give some properties of this norm in terms of $p \in (1, \infty)$.
Note that sometimes we use the notation $L^p := L^p(\Omega)$ without writing $\Omega$ to avoid heavy notations.
\begin{lemma}\label{5.4}
 We have ${\Vert T_I \Vert}_2 = 1.$ 
\end{lemma}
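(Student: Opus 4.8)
The plan is to prove the two inequalities $\|T_I\|_2 \le 1$ and $\|T_I\|_2 \ge 1$ separately, working directly from the weak formulation \eqref{58} specialized to $A = I$, $M = 1$. Recall that for $(f,g)\in L^2(\Omega)\times L^2(\Omega)$ the image $T_I(f,g) = (\curl E, E)$ is built from the unique $E\in H_0(\curl;\Omega)$ with
\[
\mathcal{B}(E,F) = \int_{\Omega} f\cdot\overline{\curl F}\,dx + \int_{\Omega} g\cdot\overline{F}\,dx \qquad \text{for all } F\in H_0(\curl;\Omega),
\]
and that the relevant norm on $L^2(\Omega)\times L^2(\Omega)$ is $\|(a,b)\| = (\|a\|_{L^2}^2 + \|b\|_{L^2}^2)^{1/2}$, which coincides with the $H_0^{1,2}(\curl;\Omega)$-norm of $E$ when $(a,b) = (\curl E, E)$.

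First I would establish the upper bound by the standard energy identity: choosing the admissible test function $F = E$ gives $\|\curl E\|_{L^2}^2 + \|E\|_{L^2}^2 = \int_{\Omega} f\cdot\overline{\curl E}\,dx + \int_{\Omega} g\cdot\overline{E}\,dx$. Applying the Cauchy--Schwarz inequality in $L^2(\Omega)$ to each term on the right, and then the Cauchy--Schwarz inequality in $\mathbb{R}^2$ to the vectors $(\|f\|_{L^2},\|g\|_{L^2})$ and $(\|\curl E\|_{L^2},\|E\|_{L^2})$, yields
\[
\|\curl E\|_{L^2}^2 + \|E\|_{L^2}^2 \le \big(\|f\|_{L^2}^2 + \|g\|_{L^2}^2\big)^{1/2}\big(\|\curl E\|_{L^2}^2 + \|E\|_{L^2}^2\big)^{1/2}.
\]
Dividing by the last factor (which we may assume nonzero) gives $\|T_I(f,g)\| \le \|(f,g)\|$, hence $\|T_I\|_2 \le 1$.

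The main point, and the only step that needs an idea rather than a routine estimate, is the matching lower bound. Here I would use a fixed-point observation: take any nonzero $E\in H_0(\curl;\Omega)$ and set $f := \curl E$ and $g := E$, both of which lie in $L^2(\Omega)$. Then the right-hand functional in the weak formulation becomes exactly $\int_{\Omega}\curl E\cdot\overline{\curl F}\,dx + \int_{\Omega}E\cdot\overline{F}\,dx = \mathcal{B}(E,F)$, so $E$ is itself the unique weak solution for this data. Consequently $T_I(f,g) = (\curl E, E) = (f,g)$, i.e. the pair is fixed by $T_I$, which forces $\|T_I\|_2 \ge 1$. Combined with the upper bound, this gives $\|T_I\|_2 = 1$. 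No genuine obstacle arises: the self-adjointness and coercivity of $\mathcal{B}$ (equivalently, that $k=i$ is not a Maxwell eigenvalue, as used in the proof of Theorem \ref{Lemma5.2}) guarantee the well-posedness that makes both the test-function choice and the fixed-point construction legitimate.
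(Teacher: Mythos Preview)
Your proof is correct. The upper bound is handled exactly as in the paper: test the weak formulation with $F=E$ and apply Cauchy--Schwarz (the paper phrases the last step via Young's inequality, but the content is identical). For the lower bound the paper argues through a duality computation, writing $\Vert T_I\Vert_2$ as a double supremum, restricting the inner supremum to pairs of the form $(\curl \tilde g,\tilde g)$, and then invoking the weak formulation to collapse the expression to $\Vert(\curl \tilde g,\tilde g)\Vert_{L^2\times L^2}=1$. Your fixed-point observation---that $T_I(\curl E,E)=(\curl E,E)$ for any $E\in H_0(\curl;\Omega)$---is the same mechanism stripped to its essence: both arguments exploit that $T_I$ acts as the identity on the subspace $\{(\curl E,E):E\in H_0(\curl;\Omega)\}$, but your formulation reaches the conclusion in one line without the duality scaffolding.
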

\begin{proof}On the one hand, we have
\begin{align}
 {\Vert T_I \Vert}_2 
& = \sup_{{\Vert (f,g) \Vert}_{L^2 \times L^2} = 1} {\Vert T_I(f,g) \Vert}_{L^2 \times L^2} \nonumber \\
& = \sup_{{\Vert (f,g) \Vert}_{L^2 \times L^2} = 1} {\Vert (\curl U^{f,g}, U^{f,g}) \Vert}_{L^2 \times L^2} \nonumber \\
& = \sup_{{\Vert (f,g) \Vert}_{L^2 \times L^2} = 1} \sup_{{\Vert (\tilde f,\tilde g \Vert}_{L^2 \times L^2} = 1}{\vert \int_\Omega \curl U^{f,g}(x) \cdot \overline{\tilde f(x)}dx + 
\int_\Omega U^{f,g}(x) \cdot \overline{\tilde g(x)}dx \vert} \nonumber \\
& \geq \sup_{{\Vert (f,g) \Vert}_{L^2 \times L^2} = 1} \sup_{{\Vert (\curl \tilde g,\tilde g) \Vert}_{L^2 \times L^2} = 1}
{\vert \int_\Omega \curl U^{f,g}(x) \cdot \overline{\curl \tilde g(x)}dx 
+ \int_\Omega U^{f,g}(x) \cdot \overline{\tilde g(x)}dx \vert} \nonumber \\
& \big( \mbox{ using } (\ref{58}) \mbox{ with } A=I \big)\; \nonumber \\
& = \sup_{{\Vert (f,g) \Vert}_{L^2 \times L^2} = 1} \sup_{{\Vert (\curl \tilde g,\tilde g) \Vert}_{L^2 \times L^2} = 1}
{\vert \int_\Omega (f(x), g(x))\cdot \overline{(\curl \tilde g(x), \tilde g(x))}dx \vert}\;  \nonumber \\
& \big( 
\forall \tilde g \in {H_{0}^{1,2}}(\curl;\Omega)\; /\; {\Vert (\curl \tilde g,\tilde g) \Vert}_{L^2 \times L^2} = 1 \big) \nonumber \\
& \geq \sup_{{\Vert (f,g) \Vert}_{L^2 \times L^2} = 1} 
{\vert \int_\Omega (f(x), g(x))\cdot \overline{(\curl \tilde g(x), \tilde g(x))}dx \vert}\;  \nonumber \\
& = {\Vert (\curl \tilde g,\tilde g) \Vert}_{L^2 \times L^2}\;  \big( 
\forall \tilde g \in {H_{0}^{1,2}}(\curl;\Omega)\; /\; {\Vert (\curl \tilde g,\tilde g) \Vert}_{L^2 \times L^2} = 1 \big) \nonumber \\
&=1. \label{one_side}
\end{align}
On the other hand,
\begin{equation*}
 {\Vert T_I \Vert}_2 = \sup_{{\Vert (f,g) \Vert}_{L^2 \times L^2} = 1} \sup_{{\Vert (\tilde f,\tilde g) \Vert}_{L^2 \times L^2} = 1}
{\vert \int_\Omega \curl U^{f,g}(x) \cdot \overline{\tilde f(x)}dx + \int_\Omega U^{f,g}(x) \cdot \overline{\tilde g(x)}dx \vert}
\end{equation*}
and the Cauchy-Schwartz inequality gives
\begin{equation}\label{Ch}
\begin{split}
 {\Vert T_I \Vert}_2  
&\leq \sup_{{\Vert (f,g) \Vert}_{L^2 \times L^2} = 1} \sup_{{\Vert (\tilde f,\tilde g) \Vert}_{L^2 \times L^2} = 1} [{\Vert \curl U^{f,g}\Vert}_{L^2}{\Vert\tilde f\Vert}_{L^2} + {\Vert U^{f,g}\Vert}_{L^2} {\Vert \tilde g\Vert}_{L^2}] \\
& \leq \sup_{{\Vert (f,g) \Vert}_{L^2 \times L^2} = 1} [\frac{1}{2} \{ {\Vert \curl U^{f,g}\Vert}_{L^2}^{2} + {\Vert U^{f,g}\Vert}_{L^2}^{2}\} + \frac{1}{2}]. \\
\end{split}
\end{equation}
Given $f\in L^2(\Omega)$ and $g\in L^2(\Omega)$, $ U^{f,g} \in {H_{0}^{1,2}}(\curl;\Omega)$ is a weak solution of the equation
\begin{equation*}
 \curl \curl U^{f,g} + U^{f,g} = \curl f + g,
\end{equation*}
i.e. it satisfies (\ref{58}) for $A=I$ and $q=2$. Taking $E=F=U^{f,g}$ in (\ref{58}), we obtain
\begin{equation*}
 \int_{\Omega} {\vert \curl U^{f,g}(x)\vert}^2dx + \int_{\Omega} {\vert U^{f,g}(x)\vert}^2dx = 
\int_{\Omega} f(x) \cdot \overline{\curl U^{f,g}(x)}dx + \int_{\Omega} g(x) \cdot \overline{U^{f,g}(x)}dx.
\end{equation*}
By the Cauchy-Schwartz and Young inequalities, we deduce that
\begin{equation}\label{5.13}
 {\Vert \curl U^{f,g}\Vert}_{L^2}^{2} + {\Vert U^{f,g}\Vert}_{L^2}^{2} \leq {\Vert f\Vert}_{L^2}^{2} + {\Vert g\Vert}_{L^2}^{2}.
\end{equation}
So, \eqref{Ch} and \eqref{5.13} give 
\begin{equation}\label{5.14}
 {\Vert T_I \Vert}_2  \leq 1.
\end{equation}
 Finally \eqref{one_side} and \eqref{5.14} imply 
\begin{equation}
 {\Vert T_I \Vert}_2  = 1.
\end{equation}  
\end{proof}
The following lemma gives a lower bound of the bilinear form $\mathcal{B}$ in terms of $\Vert T_I \Vert_p$.
\begin{lemma}\label{5.5}
 For $2 \leq p < \infty,$ we have
\begin{equation*}
 \inf_{{\Vert F\Vert}_{1,p'} = 1} \sup_{{\Vert E\Vert}_{1,p} = 1} \vert \mathcal{B}(E,F)\vert \geq \frac{1}{{\Vert T_I \Vert}_p}.
\end{equation*}
\end{lemma}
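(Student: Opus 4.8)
The plan is to convert the inf--sup quantity into an $L^p$--$L^{p'}$ duality statement by combining the weak formulation \eqref{58} with the very definition of the operator $T_I$. Fix $F\in H_0^{1,p'}(\curl;\Omega)$ with $\|F\|_{1,p'}=1$. For arbitrary data $(f,g)\in L^p(\Omega)\times L^p(\Omega)$ let $U^{f,g}$ denote the solution of \eqref{V}, so that \eqref{58} with $A=I$ gives the identity $\mathcal{B}(U^{f,g},F)=\int_\Omega f\cdot\overline{\curl F}\,dx+\int_\Omega g\cdot\overline{F}\,dx$. The right-hand side is a bounded linear functional of $(f,g)$ whose norm on $L^p\times L^p$ equals $\|(\curl F,F)\|_{L^{p'}\times L^{p'}}=\|F\|_{1,p'}=1$, by the duality $(L^p)'=L^{p'}$.

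First I would use the boundedness of $T_I$ (furnished by Theorem \ref{Lemma5.2}) to control the test field. Since $T_I(f,g)=(\curl U^{f,g},U^{f,g})$, the definition of $\|T_I\|_p$ yields $\|U^{f,g}\|_{1,p}=\|T_I(f,g)\|_p\le \|T_I\|_p\,\|(f,g)\|_{L^p\times L^p}$. Hence, for any $(f,g)$ with $U^{f,g}\neq0$, the normalized field $E:=U^{f,g}/\|U^{f,g}\|_{1,p}$ is an admissible competitor in the inner supremum, and $|\mathcal{B}(E,F)|=|\mathcal{B}(U^{f,g},F)|/\|U^{f,g}\|_{1,p}\ge |\mathcal{B}(U^{f,g},F)|/\bigl(\|T_I\|_p\,\|(f,g)\|_{L^p\times L^p}\bigr)$.

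Next I would optimize over the data. Inserting the identity coming from \eqref{58} and taking the supremum over $(f,g)\neq0$ gives $\sup_{\|E\|_{1,p}=1}|\mathcal{B}(E,F)|\ge \frac{1}{\|T_I\|_p}\sup_{(f,g)\neq0}\frac{\bigl|\int_\Omega f\cdot\overline{\curl F}+\int_\Omega g\cdot\overline{F}\bigr|}{\|(f,g)\|_{L^p\times L^p}}=\frac{1}{\|T_I\|_p}\,\|F\|_{1,p'}=\frac{1}{\|T_I\|_p}$. Since this holds for every $F$ with $\|F\|_{1,p'}=1$, taking the infimum over such $F$ produces the asserted bound. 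Whenever the functional $(f,g)\mapsto\int_\Omega f\cdot\overline{\curl F}+\int_\Omega g\cdot\overline{F}$ is nonzero---which it is for $F\neq0$---one automatically has $\mathcal{B}(U^{f,g},F)\neq0$ and hence $U^{f,g}\neq0$, so the normalization step is legitimate and the supremum defining the dual norm is approached arbitrarily closely.

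The only points requiring care are the two norm conventions: that the product norm on $L^p\times L^p$ used to define $\|T_I\|_p$ matches the equivalent norm $\|\cdot\|_{1,p}$ under the correspondence $E\leftrightarrow(\curl E,E)$, and that its dual is the conjugate $\ell^{p'}$-combination measuring $\|F\|_{1,p'}$. These are routine, and no analytic difficulty beyond the boundedness of $T_I$ enters, which is precisely why the estimate degenerates to the sharp constant $1/\|T_I\|_p$. The hypothesis $2\le p<\infty$ plays no essential role in this computation beyond fixing $p'\le 2$; it is dictated by the subsequent perturbation argument rather than by the duality itself.
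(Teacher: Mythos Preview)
Your argument is correct and is essentially the same as the paper's: both fix $F$, invoke the weak formulation \eqref{58} with $A=I$ to rewrite $\mathcal{B}(U^{f,g},F)$ as the $L^p$--$L^{p'}$ pairing with $(\curl F,F)$, use the bound $\|U^{f,g}\|_{1,p}\le\|T_I\|_p\|(f,g)\|_{L^p\times L^p}$, and then normalize and optimize by duality. The only cosmetic difference is that the paper starts from the dual-norm expression for $\|(\curl F,F)\|_{L^{p'}\times L^{p'}}$ and works toward the inf--sup, whereas you start from the inf--sup and work toward the dual norm; the content is identical, and your remark on matching the product norms is exactly the point the paper leaves implicit when it cites the isometric isomorphism $(L^p\times L^p)'\cong L^{p'}\times L^{p'}$.
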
  
\begin{proof}
Let $F\in H_{0}^{1,{p'}}(\curl; \Omega)$, then
 \begin{align*}
 {\Vert (\curl F, F)\Vert}_{L^{p'} \times L^{p'}} 
& = \sup_{{\Vert (\tilde f,\tilde g) \Vert}_{L^p \times L^p} = 1} {\vert (\curl F, F) \cdot (\tilde f, \tilde g)\vert} \\
& = \sup_{{\Vert (\tilde f,\tilde g) \Vert}_{L^p \times L^p} = 1} {\vert \int_{\Omega} \{ \curl F(x) \cdot \overline{\tilde f(x)} + F(x) \cdot \overline{\tilde g(x)}\}dx\vert} 
\end{align*}
where we used the fact that $(L^p(\Omega)\times L^p(\Omega))^{'}$ is isometrically isomorphic to $L^{p'}(\Omega)\times L^{p'}(\Omega) $, 
see [\cite{Simader}, Lemma 4.2], for instance.
In addition, for $\tilde f \in L^p(\Omega)$ and $\tilde g \in L^p(\Omega)$ there exists a unique $\tilde E \in H_{0}^{1,{p}}(\curl; \Omega) $
such that (\ref{58}), with  $A=I$, is satisfied. Therefore, $\mbox{ using } (\ref{58}) \mbox{ with } A=I$
\[
{\Vert (\curl F, F)\Vert}_{L^{p'} \times L^{p'}}
 = \sup_{{\Vert (\tilde f,\tilde g) \Vert}_{L^p \times L^p} = 1} 
{\vert \int_{\Omega} \{ \curl F(x) \cdot \overline{\curl \tilde E(x)} + F(x) \cdot \overline{\tilde E(x)}\}dx\vert}. 
\]
The boundedness property of $T_I : L^p(\Omega) \times L^p(\Omega) \rightarrow L^p(\Omega) \times L^p(\Omega)$ for $p \geq 2$ implies
\begin{align*}
 {\Vert (\curl F, F)\Vert}_{L^{p'} \times L^{p'}}
& \leq \sup_{{\Vert (\curl \tilde E,\tilde E) \Vert}_{L^p \times L^p} \leq {\Vert T_I\Vert}_p} {\vert \int_{\Omega} 
\{\curl F(x)\cdot \overline{\curl \tilde E(x)} + F(x) \cdot \overline{\tilde E(x)}\}dx\vert} \\
& = {\Vert T_I\Vert}_p \sup_{{\frac{{\Vert (\curl \tilde E,\tilde E) \Vert}_{L^p \times L^p}}{{\Vert T_I\Vert}_p}} \leq 1}
 {\vert \int_{\Omega} \{\curl F(x)\cdot \overline{\frac{\curl \tilde E(x)}{{\Vert T_I\Vert}_p}} + F(x) \cdot \overline{\frac{\tilde E(x)}{{\Vert T_I\Vert}_p}}\}dx\vert} 
\end{align*}
Define $\tilde{\tilde E} := \frac{\tilde E}{{\Vert T_I\Vert}_p}.$
Then we get,
\begin{equation*}
 {\Vert (\curl F, F)\Vert}_{L^{p'} \times L^{p'}} \leq {\Vert T_I\Vert}_p\sup_{{\Vert (\curl \tilde {\tilde E},\tilde {\tilde E}) \Vert}_{L^p \times L^p} \leq 1} 
{\vert \int_{\Omega} \{\curl F(x) \cdot \overline{\curl \tilde{\tilde E}(x)} + F(x) \cdot \overline{\tilde{\tilde E}(x)\}}dx\vert}
\end{equation*}
Now, define $E := \frac{\tilde{\tilde E}}{{\Vert (\curl \tilde {\tilde E},\tilde {\tilde E}) \Vert}_{L^p \times L^p}}$.
Therefore we obtain
\begin{equation*}
 {\Vert (\curl F, F)\Vert}_{L^{p'} \times L^{p'}} \leq {\Vert T_I\Vert}_p\sup_{{\Vert (\curl E, E) \Vert}_{L^p \times L^p} = 1} 
{\vert \int_{\Omega} \{\curl F(x) \cdot \overline{\curl E(x)} + F(x) \cdot \overline{E(x)}\}dx\vert}
\end{equation*}
Since ${\Vert (\curl E, E) \Vert}_{L^p \times L^p} = {\Vert E\Vert}_{1,p}$, then taking infimum over $H_{0}^{1,{p'}}(\curl; \Omega)$ we obtain
\begin{equation}\label{5.16*}
 \inf_{{\Vert F\Vert}_{1,p'} = 1} \sup_{{\Vert E\Vert}_{1,p} = 1} \vert \mathcal{B}(E,F)\vert \geq \frac{1}{{\Vert T_I \Vert}_p}.
\end{equation} 
\end{proof}
\begin{subsection}{\textbf{The perturbed problem}}
\end{subsection}
\begin{lemma}\label{5.3}
 Let $\Omega$ be a bounded $C^1$ domain. Suppose that $A= A(x)$ is a Hermitian matrix, with measurable entries, and satisfies the uniformly 
ellipticity condition \eqref{unii}. 
Assume that $q$ is some fixed number satisfying $2 \leq q< \infty.$
Under the condition
\begin{equation}\label{5.16}
 \inf_{{\Vert F\Vert}_{1,q'}=1} \sup_{{\Vert E\Vert}_{1,q}=1} {\vert {\mathcal{B}}_{A}(E,F)\vert}\geq \frac{1}{K} >0
\end{equation}
 the Maxwell system of equations 
\begin{equation}\label{5.17}
 \curl (A\ \curl E) + E = \curl f + g
\end{equation}
is uniquely weakly solvable in $ H_{0}^{1,q'}(\curl;\Omega)$ for each $g\in{L^{q'}(\Omega)}$ and
 $f\in{L^{q'}(\Omega)}$ and the weak solution satisfies
\begin{equation*}
 {\Vert E\Vert}_{L^{q'}(\Omega)} + {\Vert \curl E\Vert}_{L^{q'}(\Omega)} \leq K \{ {\Vert f\Vert}_{L^{q'}(\Omega)} +{\Vert g\Vert}_{L^{q'}(\Omega)}\},
\end{equation*}
 where $K$ is a positive constant depending on $p.$
 \end{lemma}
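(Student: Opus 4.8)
The plan is to read \eqref{5.16} as the inf--sup hypothesis of the Banach--Nečas--Babuška (generalized Lax--Milgram/Babuška) theorem and to supply the complementary non-degeneracy condition from the elementary $L^2$-coercivity of $\mathcal{B}_A$. Concretely, I regard the weak form of \eqref{5.17} as the problem of finding $E\in H_0^{1,q'}(\curl;\Omega)$ such that $\mathcal{B}_A(E,F)=\ell(F)$ for every test function $F\in H_0^{1,q}(\curl;\Omega)$, where $\ell(F):=\int_\Omega f\cdot\overline{\curl F}+\int_\Omega g\cdot\overline{F}$ is the data functional. Since $f,g\in L^{q'}(\Omega)$ and $F,\curl F\in L^q(\Omega)$, Hölder's inequality shows $\ell$ is a bounded functional on $V:=H_0^{1,q}(\curl;\Omega)$ with $\|\ell\|_{V'}\le\|f\|_{L^{q'}}+\|g\|_{L^{q'}}$, and \eqref{unii} makes $\mathcal{B}_A$ a bounded sesquilinear form on $W\times V$ with $W:=H_0^{1,q'}(\curl;\Omega)$. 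Both $W$ and $V$ are reflexive for $1<q<\infty$, since $E\mapsto(E,\curl E)$ embeds each isometrically as a closed subspace of $L^r\times L^r$, so the abstract theorem is applicable.

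First I would record the two abstract hypotheses for the form $b:=\mathcal{B}_A$ on $W\times V$: (i) there is $\alpha>0$ with $\inf_{\|E\|_{1,q'}=1}\sup_{\|F\|_{1,q}=1}|b(E,F)|\ge\alpha$, and (ii) for every $0\neq F\in V$ there exists $E\in W$ with $b(E,F)\neq0$. The key observation for (i) is that, $A$ being Hermitian, $\mathcal{B}_A$ is a Hermitian form, so $\mathcal{B}_A(E,F)=\overline{\mathcal{B}_A(F,E)}$ and in particular $|\mathcal{B}_A(E,F)|=|\mathcal{B}_A(F,E)|$. Relabelling the two arguments then turns \eqref{5.16}, whose infimum runs over the $H_0^{1,q'}$-sphere sitting in the \emph{second} slot of $\mathcal{B}_A$, into exactly condition (i) with $\alpha=1/K$. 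This matching of the reflexive solution and test spaces to the correct slots of the form is the delicate bookkeeping of the argument.

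The non-degeneracy condition (ii) is where the restriction $2\le q<\infty$ enters. Because $\Omega$ is bounded and $q'\le2\le q$, one has the continuous inclusion $H_0^{1,q}(\curl;\Omega)\subset H_0^{1,q'}(\curl;\Omega)$, so any test function $F\in V$ is itself an admissible element of $W$. Choosing $E=F$ yields $b(F,F)=\mathcal{B}_A(F,F)=\int_\Omega A\curl F\cdot\overline{\curl F}+\int_\Omega|F|^2\ge\lambda\|\curl F\|_{L^2}^2+\|F\|_{L^2}^2$ by \eqref{unii}, which is strictly positive for $F\neq0$. This verifies (ii) (equivalently, it shows the adjoint operator is injective, i.e. $b$ has dense range), and it is the main conceptual point: the solvability at the $L^{q'}/L^q$ scale is glued to the trivial $L^2$-coercivity precisely through the embedding furnished by $q\ge2$.

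With (i) and (ii) established I would invoke the Banach--Nečas--Babuška theorem to conclude that the operator $W\to V'$, $E\mapsto b(E,\cdot)$, is an isomorphism; hence \eqref{5.17} admits a unique weak solution $E\in H_0^{1,q'}(\curl;\Omega)$, and the inf--sup constant gives $\|E\|_{1,q'}\le K\,\|\ell\|_{V'}\le K(\|f\|_{L^{q'}}+\|g\|_{L^{q'}})$. By the equivalence of $\|\cdot\|_{1,q'}$ with $\|\cdot\|_{L^{q'}}+\|\curl\cdot\|_{L^{q'}}$ this is the asserted estimate. I expect the only genuine obstacle to be the functional-analytic set-up of the first two paragraphs---correctly identifying the solution/test spaces and aligning the two slots of the Hermitian form with the abstract statement; once that is done, the coercivity verification of (ii) and the final application of the theorem are routine.
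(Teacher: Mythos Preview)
Your proof is correct and arguably cleaner than the paper's, but it proceeds by a genuinely different mechanism. You invoke the Banach--Ne\v{c}as--Babu\v{s}ka theorem directly: after using the Hermitian symmetry of $\mathcal{B}_A$ to recast \eqref{5.16} as the inf--sup condition over the solution space $W=H_0^{1,q'}(\curl;\Omega)$, you verify the complementary non-degeneracy condition by exploiting the inclusion $V=H_0^{1,q}(\curl;\Omega)\subset W$ (valid since $q\ge 2$ on a bounded domain) to test with $E=F$ and use the $L^2$-coercivity from \eqref{unii}. The abstract theorem then delivers existence, uniqueness, and the estimate in one stroke.

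The paper instead runs an approximation argument: it takes sequences $f_k,g_k\in L^2(\Omega)$ converging to $f,g$ in $L^{q'}(\Omega)$, solves each approximate problem in $H_0(\curl;\Omega)$ by the standard $L^2$-theory, and then uses \eqref{5.16} together with the chain $H_0^{1,q}\subset H_0^{1,2}\subset H_0^{1,q'}$ to obtain a uniform bound $\|E_k\|_{1,q'}\le K(\|f_k\|_{L^{q'}}+\|g_k\|_{L^{q'}})$. Reflexivity of $H_0^{1,q'}(\curl;\Omega)$ yields a weakly convergent subsequence whose limit is the desired solution; uniqueness follows from the estimate. Both routes hinge on the same two ingredients---the inf--sup hypothesis and the embedding $H_0^{1,q}\subset H_0^{1,q'}$ that lets one fall back on $L^2$-coercivity---but the paper packages them as ``approximate in $L^2$, bound uniformly, pass to the limit,'' while you package them as ``verify the two BNB hypotheses and apply the theorem.'' Your approach is shorter and avoids the density/compactness machinery; the paper's approach is more constructive in that it exhibits the $L^{q'}$-solution as a weak limit of $L^2$-solutions.
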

\begin{proof}
%
    Consider the system of equations
\begin{equation*}
  \curl (A\ \curl E) + E = \curl f + g
\end{equation*} 
where $g \in{L^{q'}(\Omega)}$ and
 $f \in{L^{q'}(\Omega)}$ for
 $q' \leq 2.$\\
Let $f_k $ and $g_k, k=1,2,....$ be a sequence of vector fields in $L^2(\Omega)$ such that
\begin{align}
& {\Vert f_k - f\Vert}_{L^{q'}(\Omega)}\rightarrow 0 \hspace{.25cm} as \hspace{.25cm} k\rightarrow \infty,\nonumber \\
& {\Vert g_k - g\Vert}_{L^{q'}(\Omega)}\rightarrow 0 \hspace{.25cm} as \hspace{.25cm} k\rightarrow \infty. \nonumber
\end{align}
Thus, from the $L^2$-theory of Maxwell equation, given $g_k \in L^2(\Omega)$ and $ f_k \in L^2(\Omega)$, there exists unique $E_k \in H_0(\curl;\Omega)$
such that
\begin{equation*}
 \int_{\Omega} A(x)\ \curl E_k(x)\cdot \curl F(x)dx + \int_{\Omega} E_k(x)\cdot F(x)dx = \int_{\Omega} f_k(x)\cdot \curl F(x)dx + \int_{\Omega} g_k(x)\cdot F(x)dx
\end{equation*}
i.e.
\begin{equation}\label{5.18}
 {\mathcal{B}}_{A}(F,E_k) = \int_{\Omega} \curl F(x)\cdot f_k(x)dx + \int_{\Omega} F(x)\cdot g_k(x)dx
\end{equation}
holds for all $F \in H_0(\curl; \Omega) = H_{0}^{1,2}(\curl; \Omega).$
Since $H_{0}^{1,q}(\curl; \Omega) \subset H_{0}^{1,2}(\curl; \Omega) \subset H_{0}^{1,{q'}}(\curl; \Omega)$
it follows from the assumption \eqref{5.16} and from \eqref{5.18} that
\begin{equation}\label{5.19}
 {\Vert E_k\Vert}_{L^{q'}(\Omega)} + {\Vert \curl E_k\Vert}_{L^{q'}(\Omega)} \leq K\{ {\Vert f_k\Vert}_{L^{q'}(\Omega)} + {\Vert g_k\Vert}_{L^{q'}(\Omega)}\}.
\end{equation}
Since, $H_{0}^{1,{q'}}(\curl; \Omega)$ is a reflexive space\footnote{ 
One way to justify this property is as follows.
We define the embedding map $i : H_{0}^{1,{q'}}(\curl; \Omega) \rightarrow L^{q'}(\Omega)\times L^{q'}(\Omega)$ by
 $E\mapsto (E,\curl E)$. Since $i$ is an isometric linear map and $ H_{0}^{1,{q'}}(\curl; \Omega)$ is complete then $i(H_{0}^{1,{q'}}(\curl; \Omega))$ is a closed subspace of $L^{q'}(\Omega)\times L^{q'}(\Omega)$.
As $L^{q'}(\Omega)\times L^{q'}(\Omega)$  is reflexive, see [\cite{Simader}, Lemma 4.3] for instance, and since a closed subspace of a reflexive space is itself a reflexive,
therefore $H_{0}^{1,{q'}}(\curl; \Omega)$ is a reflexive space.} and $\{E_k\}$ is bounded in $H_{0}^{1,{q'}}(\curl; \Omega),$
then $\{E_k\}$ has a weakly convergent subsequence in $H_{0}^{1,{q'}}(\curl; \Omega)$,
i.e, there exists $ E\in H_{0}^{1,{q'}}(\curl; \Omega)$ such that
\begin{equation*}
 E_{k_r}\rightharpoonup E  \hspace{.25cm} in \hspace{.25cm} H_{0}^{1,{q'}}(\curl; \Omega).
\end{equation*}
Therefore, we obtain
\begin{equation}\label{weakly}
 \begin{split}
 {\Vert E\Vert}_{1,{q'}} 
&\leq \liminf_{k_r \rightarrow \infty} {\Vert {E_{k_r}}\Vert}_{1,{q'}} \\
& \leq K \liminf_{k_r \rightarrow \infty} \{ {\Vert f_{k_r}\Vert}_{L^{q'}(\Omega)} +{\Vert g_{k_r}\Vert}_{L^{q'}(\Omega)}\} \\
& \leq K \{ {\Vert f\Vert}_{L^{q'}(\Omega)} +{\Vert g\Vert}_{L^{q'}(\Omega)}\}. 
 \end{split}
\end{equation}
Hence $E$ is in $H_{0}^{1,{q'}}(\curl; \Omega)$, solves equation \eqref{5.17} and from \eqref{weakly}, 
it satisfies the estimate
\begin{equation}\label{last_eq}
 {\Vert E\Vert}_{L^{q'}(\Omega)} + {\Vert \curl E\Vert}_{L^{q'}(\Omega)} \leq K \{ {\Vert f\Vert}_{L^{q'}(\Omega)} +{\Vert g\Vert}_{L^{q'}(\Omega)}\}.
\end{equation}
In addition, suppose that $E$ is a function in $ H_{0}^{1,{q'}}(\curl; \Omega)$ and solves equation \eqref{5.17} with $f=0 $ and $g=0$.
Then from \eqref{last_eq} we see that $E=0.$ This shows that the solution is unique. 
\end{proof} 
Now, we state the main result of this section.
\begin{theorem}\label{5.6}
 Let $\Omega$ be a bounded $C^1$ domain in ${\mathbb R}^3$. Consider the system of differential equations
\begin{equation}\label{5.21}
 \curl (A\ \curl E) + E = \curl f + g,
\end{equation}
where $A = A(x)$ is a Hermitian matrix, with measurable entries, and satisfies the uniform ellipticity condition \eqref{unii}. Then for every  
$f \in L^{p}(\Omega)$ and $g \in L^{p}(\Omega)$, there exists $\delta >0$ such that the problem \eqref{5.21} has a unique weak solution in $H_{0}^{1,{p}}(\curl; \Omega)$, 
where $p \in {(\frac{2+\delta}{1+\delta}, 2 ]}$. In addition, the solution satisfies the estimate
\begin{equation}\label{5.22}
  {\Vert E\Vert}_{L^{p}(\Omega)} + {\Vert \curl E\Vert}_{L^{p}(\Omega)} \leq C \{ {\Vert f\Vert}_{L^{p}(\Omega)} +{\Vert g\Vert}_{L^{p}(\Omega)}\},
\end{equation}
where $C$ is a constant depending only on $\Omega, \lambda $ and $p.$
\end{theorem}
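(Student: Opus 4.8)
The plan is to deduce the theorem from the $L^2$ theory via the Meyers-type perturbation machinery already prepared in Lemmas \ref{5.4}, \ref{5.5} and \ref{5.3}. The guiding observation is that the perturbed form $\mathcal{B}_A$ differs from the unperturbed form $\mathcal{B}=\mathcal{B}_I$ only in its leading term, and that this difference is controlled in operator norm by $\|A-I\|_{L^\infty}\le 1-\lambda$: by \eqref{unii} the Hermitian matrix $A$ has eigenvalues in $[\lambda,1]$, so $A-I$ has operator norm at most $1-\lambda$. Since the inf-sup constant of $\mathcal{B}$ equals $1$ at $p=2$ (Lemma \ref{5.4}) and $1-\lambda<1$, there is room to absorb the perturbation for exponents close to $2$, and this is exactly what produces the admissible range.

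First I would record the near-continuity of $p\mapsto\|T_I\|_p$ at $2$. By Theorem \ref{Lemma5.2}, $T_I$ is bounded on $L^p(\Omega)\times L^p(\Omega)$ for every $p\in(1,\infty)$, and by Lemma \ref{5.4}, $\|T_I\|_2=1$. Viewing $L^p(\Omega)\times L^p(\Omega)$ with its $\ell^p$-combined norm (the one inducing $\|\cdot\|_{1,p}$) as an $L^p$ space of vector-valued functions and interpolating by Riesz--Thorin between $p=2$ and some fixed $p_0>2$, one gets $\|T_I\|_q\le \|T_I\|_{p_0}^{\theta(q)}$ with $\theta(q)\to 0$ as $q\to 2^+$. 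Hence $\limsup_{q\to 2^+}\|T_I\|_q\le 1$, and since $\tfrac{1}{1-\lambda}>1$ there exists $\delta>0$, depending only on $\Omega$ and $\lambda$, such that $\|T_I\|_q<\tfrac{1}{1-\lambda}$ for all $q\in[2,2+\delta)$.

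Next I would verify the inf-sup hypothesis \eqref{5.16} for $\mathcal{B}_A$ at these exponents $q\ge 2$. Writing $\mathcal{B}_A(E,F)=\mathcal{B}(E,F)+\int_{\Omega}(A-I)\curl E\cdot\overline{\curl F}\,dx$ and bounding the second term by $(1-\lambda)\|E\|_{1,q}\|F\|_{1,q'}$ through H\"older's inequality, then combining with the lower bound of Lemma \ref{5.5}, one obtains
\[
\inf_{\|F\|_{1,q'}=1}\ \sup_{\|E\|_{1,q}=1}\ |\mathcal{B}_A(E,F)|\ \ge\ \frac{1}{\|T_I\|_q}-(1-\lambda)\ =:\ \frac{1}{K}\ >\ 0
\]
for $q\in[2,2+\delta)$, with $K=K(\Omega,\lambda,q)$ finite by the previous step. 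This is precisely the condition \eqref{5.16} needed to run Lemma \ref{5.3} at the conjugate exponent.

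Finally, I would apply Lemma \ref{5.3} with this $q$: its conclusion gives unique weak solvability of \eqref{5.21} in $H_0^{1,q'}(\curl;\Omega)$ together with $\|E\|_{L^{q'}}+\|\curl E\|_{L^{q'}}\le K\{\|f\|_{L^{q'}}+\|g\|_{L^{q'}}\}$. Setting $p=q'$ and noting that $q\in[2,2+\delta)$ corresponds exactly to $p\in(\tfrac{2+\delta}{1+\delta},2]$ yields \eqref{5.22} with $C=K$. I expect the main obstacle to be the first step: one must force the inf-sup constant of the unperturbed form in $L^q$ to stay close enough to its value $1$ at $L^2$ to beat the only crudely controlled perturbation size $1-\lambda$. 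This is where the interpolation bound on $\|T_I\|_q$, resting on $\|T_I\|_2=1$ (Lemma \ref{5.4}) and the boundedness of $T_I$ on all $L^p$ (Theorem \ref{Lemma5.2}), is essential, and it is what determines the admissible interval of $p$ and the dependence of $\delta$ on $\Omega$ and $\lambda$.
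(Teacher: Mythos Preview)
Your proposal is correct and follows essentially the same approach as the paper: split $\mathcal{B}_A=\mathcal{B}+\tilde{\mathcal{B}}_A$, bound the perturbation by $(1-\lambda)$ via the eigenvalue bound for $A-I$, invoke Lemma~\ref{5.5} for the unperturbed inf--sup, use interpolation/continuity of $q\mapsto\|T_I\|_q$ near $q=2$ (together with $\|T_I\|_2=1$) to make $\tfrac{1}{\|T_I\|_q}-(1-\lambda)>0$, and then apply Lemma~\ref{5.3} at the conjugate exponent. The only cosmetic differences are your cleaner labeling of the large exponent as $q$ (the paper calls it $p'$) and your explicit Riesz--Thorin bound in place of the paper's appeal to log-convexity of $p\mapsto\|T_I\|_p$.
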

\begin{proof}
 Rewrite
\begin{align*}
 \mathcal{B}_A(E,F)
& = \int_{\Omega} (A(x) \curl E(x)\cdot \curl F(x))dx + \int_{\Omega} E(x)\cdot F(x)dx \nonumber \\
& = [\int_{\Omega} \curl E(x)\cdot \curl F(x)dx + \int_{\Omega} E(x)\cdot F(x)dx] + \int_{\Omega} (A-I)(x) \curl E(x)\cdot \curl F(x)dx \nonumber \\
& = \mathcal{B}(E,F) + {\tilde {\mathcal{B}}}_A (E,F) \nonumber
\end{align*}
where, ${\tilde {\mathcal{B}}}_A (E,F) = \int_{\Omega} (A-I)(x) \curl E(x)\cdot \curl F(x)dx.$
Hence
\begin{equation}
 \sup_{{\Vert E\Vert}_{1,p'} = 1}{\vert \mathcal{B}_A \vert} \geq \sup_{{\Vert E\Vert}_{1,p'} = 1}{\vert \mathcal{B} \vert} - \sup_{{\Vert E\Vert}_{1,p'} = 1}{\vert {\tilde {\mathcal{B}}}_A \vert},
\end{equation}
where $2 \leq p' <\infty,$ $ F$ is in $H_{0}^{1,{p}}(\curl; \Omega)$ and $E$ varies over $H_{0}^{1,p'}(\curl; \Omega).$ Then from
Lemma \ref{5.5}, applied for $p'$, we have
\begin{equation*}
 \sup_{{\Vert E\Vert}_{1,p'} = 1} \vert \mathcal{B}\vert \geq {\frac{1}{K_{p'}}} {\Vert F\Vert}_{1,p}.
\end{equation*}
Also, we have
\begin{equation*}
\sup_{{\Vert E\Vert}_{1,p'} = 1} \vert {\tilde {\mathcal{B}}}_A \vert \leq (1-\lambda) {\Vert F\Vert}_{1,p}
\end{equation*}
where the constant $K_p$ is defined as $K_p = {\Vert T_I \Vert}_p $ and the constant $\lambda$ is defined in \eqref{uni}. Therefore
\begin{equation}
 \inf_{{\Vert F\Vert}_{1,p} = 1} \sup_{{\Vert E\Vert}_{1,p'} = 1} \vert \mathcal{B}(E,F)\vert \geq (\frac{1}{K_{p'}} - 1 +\lambda).
\end{equation}
From the Riesz Convexity Theorem, $T_I$ is a bounded linear operator from $L^p(\Omega) \times L^p(\Omega) \rightarrow L^p(\Omega) \times L^p(\Omega)$ for every $p$ in the range $1 \leq p < \infty$
and $\log {\Vert T_I\Vert}_p$ is a convex function of $\frac{1}{p}$, see \cite{Bergh}.
In particular, the function $p \longmapsto {\Vert T_I\Vert}_p$ is continuous for $p \in [1, \infty)$.
Now, define $F : [1, \infty) \rightarrow \mathbb{R}$ by $F(p) := \frac{1}{K_p} -1 +\lambda$.
As  $F(2)=\lambda > 0$ and $F(p)$ is continuous, then there exists $\delta > 0$ such that $F(p) > 0$ for all $p \in (2 - \delta, 2 + \delta).$
Thus, in the interval $2 \leq p' < 2 + \delta$ we have 
\begin{equation}
 \inf_{{\Vert F\Vert}_{1,p} =1} \sup_{{\Vert E\Vert}_{1,p'}=1} {\vert {\mathcal{B}}_{A}(E,F)\vert}\geq \frac{1}{{\tilde K}_{p'}} >0
\end{equation}
where ${\tilde K}_{p'} = \frac{K_{p'}}{1- K_{p'}(1 - \lambda)}$. The rest is a consequence of Lemma \ref{5.3} (taking $q' = p$).   
\end{proof}
\begin{remark}
To deal with the case $p>2$, we need the corresponding result to Lemma \ref{5.3} for $q<2$. So far, the technique in \cite{Me} does not
go for the Maxwell system as naturally as for the case $q>2$ described in the proof of Lemma \ref{5.3}. For $p>2$, the approach by Gr{\"o}ger, see \cite{Groe}, might be the correct one. 
The details will be given in a forthcoming work. 
\end{remark}
\begin{section}{\textbf{Appendix}}
Here, we recall some important properties from \cite{Mitrea} concerning the vector layer potentials and some Sobolev spaces useful for 
the study of the problems related to the Maxwell system.\\
\textbf{The Surface Divergence }\\
    Let $\Omega\subset\mathbb{R}^3$ be a bounded Lipschitz domain and $A\in{L^p_{\tan}}(\partial\Omega),$
 we define $\Div\;A$ as a linear functional
\begin{equation}\label{6.3}
 \langle\Div A,f\rangle:= -\int_{\partial\Omega}A(x)\cdot\nabla_{\tan}f(x)ds(x)
\end{equation}
where $f$ is any Lipschitz continuous function on ${\partial\Omega}$ and the space ${L^p_{\tan}}(\partial\Omega)$ is defined as
\begin{equation*}
 {L^p_{\tan}}(\partial\Omega) := \{A : {\partial\Omega}\rightarrow{\mathbb{C}^3}; A\in L^p(\partial\Omega), (\nu, A) = 0 \hspace{.15cm} a.e. \hspace{.15cm} \mbox{on} \hspace{.15cm}{\partial\Omega}\},
\end{equation*}
where $1<p<\infty$ and the usual tangential gradient $\nabla_{\tan} := -\nu{\times(\nu\times{\nabla})}.$\\ 
In the following theorem we gather some of the key properties which are useful in our analysis in the previous sections.
\begin{theorem}\label{from Mitrea}  
Let $D $ be a bounded Lipschitz domain and $\Omega$ be a bounded $C^1$ domain in $\mathbb{R}^3$ and let $p$ be real number such that  $1<p<\infty$. 
Then we have the following properties:
\begin{property}\label{pr5}
The operator $S_k$ is a compact operator on $L^p(\partial D)$. Also, the operator $\mathcal{S}_k$ maps $W^{-\frac{1}{p},p}(\partial D)$ 
boundedly into $W^{1,p}(D)$ (and also into $W^{1,p}(\Omega\setminus\overline{D})$ if $\overline{D}\subset\Omega$). In particular, $\nu\cdot\nabla \mathcal{S}_k = -\frac{1}{2}I + K_{k}^{*}$ on $W^{-\frac{1}{p},p}(\partial D).$ 
\end{property}
\begin{property}\label{pr4}
Let $u \in L^{p}(D)$ such that $\curl u \in L^{p}(D)$. If ${\nu\wedge u} \in L^{p}(\partial D), $ then in
fact $\nu\wedge u \in {L^{p}_{tan}}(\partial D)$ and $\Div (\nu\wedge u) = -\nu\cdot{\curl u}.$ In particular,
$\Div (\nu\wedge u) \in W^{-{\frac{1}{p}},p}(\partial D).$ 
\end{property}
\begin{property}\label{pr6}
For $k\in \mathbb{C},$ and $A\in L_{tan}^{p}(\partial D)$, we have $\div \mathcal{S}_kA = \mathcal{S}_k(\Div A)$ in $\mathbb{R}^3\setminus \partial D.$ 
\end{property}
\begin{property}\label{pr3}
 If the vector field $u \in{L^p(D)}$ is such 
that $\curl u \in{L^p(D)},$ then $\nu\wedge u \in{W^{-{\frac{1}{p}},p}(\partial D)}$ and there exists a positive constant C depending only
on the Lipschitz character of $\partial D$ so that
\begin{equation*}
 {\|{\nu\wedge u}\|}_{W^{-{\frac{1}{p}},p}(\partial D)} \leq C \{{\| {u}\|}_{L^{p}(D)}+{\|\curl u\|}_{L^{p}(D)}\}.
\end{equation*}
Also, if $u\in{L^p(D)}$ is such that $\div u \in{L^p(D)},$ then $\nu\cdot u \in{W^{-{\frac{1}{p}},p}(\partial D)}$ and
\begin{equation*}
 {\|{\nu\cdot u}\|}_{W^{-{\frac{1}{p}},p}(\partial D)} \leq C \{{\|{u}\|}_{L^{p}(D)}+{\|\div u\|}_{L^{p}(D)}\}
\end{equation*}
for some positive C depending only on the Lipschitz character of $\partial D.$
\end{property}
\begin{property}\label{pr7}
Consider a vector field $u\in L^p(D)$ such that $\div u \in L^p(D)$ and $\curl u \in L^p(D).$ 
If  $\nu\wedge u \in L^p(\partial D),$ then also $\nu\cdot u\in L^p(\partial D)$, for $p \in (1, \infty)$. If in addition $1<p\leq 2$, 
then $u \in B_{\frac{1}{p}}^{p,2}(D)$ and we have the estimate
\[
 \| u\|_{B_{\frac{1}{p}}^{p,2}(D)} \leq C \{ \|u\|_{L^p(D)} + \|\curl u\|_{L^p(D)} + \|\nabla\cdot u\|_{L^p(D)} +\|\nu\wedge u\|_{L^p(\partial D)}\}
\]
where the Sobolev-Besov space $B_{\alpha}^{p,q}(D) := [L^p(D),W^{1,p}(D)]_{\alpha,q}$ is obtained
by real interpolation for $1 < p,q < \infty$ and $0 < \alpha < 1.$ 
\end{property}
\begin{property}\label{pr2}
 Assume that $k\in \mathbb{C}\setminus \{0\}$ with $Im\ k \geq 0$ is not a Maxwell eigenvalue for $\Omega.$ 
Then for given $K\in L^p(\Omega)$ and $J\in L^p(\Omega)$, the following problem
 \begin{equation}
\begin{cases} 
&\operatorname{\curl}\ E- ik H= K \ \ \text{in}\ \Omega,\\
&\curl H + ik E = J \ \ \text{in}\ \Omega,\\
& \nu\wedge E=0 \ \ \text{on}\ \partial\Omega,
\end{cases}
\end{equation}
has a unique solution and there exists $C= C(p,k,\Omega)>0$ such that
\[
 \|E\|_{L^p(\Omega)} + \|H\|_{L^p(\Omega)} \leq C\{ \|K\|_{L^p(\Omega)} + \|J\|_{L^p(\Omega)}\} 
\]
for $1 < p < \infty.$ 
\end{property}
\end{theorem}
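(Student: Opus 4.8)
The plan is to treat Theorem \ref{from Mitrea} as an assembly of standard mapping, jump, trace, and well-posedness properties of the electromagnetic single layer potential $\mathcal{S}_k$ on Lipschitz and $C^1$ domains, whose analytic substance is imported from \cite{Mitrea}; my task is to organize the ingredients so each property is seen to follow from the single layer calculus, the $L^p$ boundary-integral theory, and the $L^p$ div--curl regularity machinery. The properties split into three families: the purely differential surface identities (Properties \ref{pr4} and \ref{pr6}); the boundedness/compactness/jump relations of $\mathcal{S}_k$ together with the $L^p$ trace estimates and the Besov regularity gain (Properties \ref{pr5}, \ref{pr3}, \ref{pr7}); and the full-range $L^p$ well-posedness of the interior Maxwell system (Property \ref{pr2}). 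I would handle the families in turn.

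For the surface identities I would verify them for smooth fields and pass to the general case via density of Lipschitz test functions and the defining formula \eqref{6.3} of $\Div$. For Property \ref{pr4}, testing $\Div(\nu\wedge u)$ against a Lipschitz $f$ and using the Green formula \eqref{intPART1} with any $H^1(\Omega)$ extension $\tilde f$ of $f$ converts the surface pairing into $-\int_D \curl u\cdot\nabla\tilde f + \int_D u\cdot\curl\nabla\tilde f$; since $\curl\nabla\tilde f=0$ this yields $\langle\Div(\nu\wedge u),f\rangle=-\langle\nu\cdot\curl u,f\rangle$, giving both the identity $\Div(\nu\wedge u)=-\nu\cdot\curl u$ and the membership in $W^{-1/p,p}(\partial D)$. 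For Property \ref{pr6}, I would differentiate $\mathcal{S}_kA$ under the integral sign, use $\nabla_x\Phi_k(x,y)=-\nabla_y\Phi_k(x,y)$, and recognize $-\int_{\partial D}\nabla_y\Phi_k\cdot A\,ds$ as exactly $\mathcal{S}_k(\Div A)$ through \eqref{6.3}.

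For the analytic properties of $\mathcal{S}_k$, compactness of $S_k$ on $L^p(\partial D)$ follows from the weak singularity $\Phi_k(x,y)=O(|x-y|^{-1})$ on the two-dimensional surface, making $S_k$ smoothing and hence compact; the one-derivative gain $\mathcal{S}_k:W^{-1/p,p}(\partial D)\to W^{1,p}$ and the conormal jump relation $\nu\cdot\nabla\mathcal{S}_k=-\tfrac12 I+K_k^*$ are the Calder\'on--Zygmund mapping and jump formulas for the single layer. The $L^p$ trace estimates of Property \ref{pr3} I would obtain from \eqref{intPART1} and the duality $(L^p)'=L^{p'}$: writing $\langle\nu\wedge u,\varphi\rangle$ as a bulk integral against an extension of $\varphi$ and taking the supremum yields the $W^{-1/p,p}$ bound, and the same argument applied to $\div u$ gives the $\nu\cdot u$ estimate.

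The genuine analytic cores, and the main obstacles, are Properties \ref{pr7} and \ref{pr2}. Property \ref{pr7} is the hardest single ingredient: on a Lipschitz domain one must show that control of $\div u$, $\curl u$ in $L^p(D)$ together with $\nu\wedge u\in L^p(\partial D)$ forces $\nu\cdot u\in L^p(\partial D)$ and places $u\in B_{1/p}^{p,2}(D)$ for $1<p\le 2$; I would derive this from the $L^p$ regularity theory of the div--curl (Hodge) system, representing $u$ through its Helmholtz components, using a Rellich-type identity to upgrade the normal trace, and interpolating between $L^p$ and $W^{1,p}$. For Property \ref{pr2}, the $L^p$ solvability of the interior Maxwell problem, I would first subtract a Maxwell volume potential to absorb the sources $K,J$ (bounded $L^p\to W^{1,p}$ by Calder\'on--Zygmund theory), then correct by a single layer representation \eqref{layer} to meet $\nu\wedge E=0$, reducing to a boundary integral equation whose operator is invertible on the $L^p$-based tangential spaces when $k$ is not a Maxwell eigenvalue (the full-range analogue of Lemma \ref{lemma3.4}). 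The $L^p$ estimate with constant depending on $p,k,\Omega$ then follows from the boundedness in Property \ref{pr5} and the invertibility of the solving operator, and the non-eigenvalue hypothesis secures uniqueness via the $L^2$ theory.
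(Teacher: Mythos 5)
Your overall architecture coincides with the paper's, because the paper offers no independent proofs of these properties: it quotes Properties \ref{pr5}, \ref{pr4}, \ref{pr6}, \ref{pr3}, \ref{pr7} and \ref{pr2} verbatim as Lemma 3.1, Lemma 4.1, Lemma 4.2, Lemma 2.3, Corollary 10.3 and Theorem 11.6 of \cite{Mitrea}, and your sketches for the elementary items are exactly the arguments behind those lemmas (Green's formula \eqref{intPART1} with a gradient test field for Property \ref{pr4}, modulo the second integration by parts using $\div\curl u=0$ and the normal trace from Property \ref{pr3}; the dual pairing of $\nu\wedge u$ against $W^{1,p'}$ extensions for Property \ref{pr3}; differentiation under the integral with $\nabla_x\Phi_k=-\nabla_y\Phi_k$ and the definition \eqref{6.3} for Property \ref{pr6}). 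There is, however, one genuine gap relative to what the statement asserts: the single part the paper actually proves, the boundedness $\mathcal{S}_k:W^{-1/p,p}(\partial D)\to W^{1,p}(\Omega\setminus\overline{D})$ for $\overline{D}\subset\Omega$, is precisely the piece \emph{not} in \cite{Mitrea}, and it is not covered by your blanket appeal to ``Calder\'on--Zygmund mapping'' theory, since those estimates are local at $\partial D$ while the annulus carries the second boundary $\partial\Omega$. The paper supplies a short but real argument: the trace/regularity bound \eqref{biriyani} on the annulus, the interior bound \eqref{mutton} on $\partial D$ via Property \ref{pr5}, and the smoothness of $\Phi_k(x,y)$ for $x\in\partial\Omega$, $y\in\partial D$ (the Lipschitz kernel estimate \eqref{fUUUnda}) giving \eqref{german}. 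Some such step, even just a cutoff separating a neighbourhood of $\partial D$ from the rest of the annulus, must appear in your proof.

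A second concrete problem is your treatment of Property \ref{pr2}: you invoke a ``full-range analogue of Lemma \ref{lemma3.4}'', but on a merely Lipschitz boundary no such analogue exists --- that is exactly why Lemma \ref{lemma3.4} is confined to $2-\delta\le p\le 2+\delta$ and why Proposition \ref{proposition1} works only with $p$ near $2$. The full range $1<p<\infty$ in Property \ref{pr2} is available only because $\partial\Omega$ is assumed $C^1$: on $C^1$ surfaces the relevant boundary singular integral operators are compact (Fabes--Jodeit--Rivi\`ere), so Riesz--Fredholm theory combined with the non-eigenvalue hypothesis yields invertibility for every $p\in(1,\infty)$. Your sketch never uses the $C^1$ hypothesis, so the key invertibility claim is unsupported as written, and false in the Lipschitz generality in which you phrase it. Relatedly, uniqueness in $L^p$ for $p<2$ does not reduce ``via the $L^2$ theory'', since an $L^p$ solution is not a priori square integrable; it requires a duality or representation argument as in \cite{Mitrea}. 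The remaining deep ingredient, Property \ref{pr7}, you correctly flag as imported (it is Corollary 10.3 of \cite{Mitrea}, resting on Rellich estimates and interpolation), which matches the paper's treatment; your one-line gesture there is acceptable only because the citation carries the weight.
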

 The properties \ref{pr5} (except the boundedness of $\mathcal{S}_k$ from $W^{-1/p,p}(\partial D)$ into $W^{1,p}(\Omega\setminus\overline{D})$),
 \ref{pr4}, \ref{pr6}, \ref{pr3}, \ref{pr7} and \ref{pr2} are proved as Lemma 3.1, Lemma 4.1, Lemma 4.2, Lemma 2.3, Corollary 10.3 and Theorem 11.6 in \cite{Mitrea} respectively. 
For the shake of completeness, we show the boundedness of $\mathcal{S}_k : W^{-1/p,p}(\partial D) \rightarrow W^{1,p}(\Omega\setminus\overline{D}),$ for $1<p<\infty$.\\
Indeed, recall that
 \[
  \mathcal{S}_kf(x) = \int_{\partial D}\Phi_k(x,y)f(y)ds(y), \ \ x\in\mathbb{R}^3\setminus\partial D.
 \]
As the trace map $\gamma : W^{1,p}(\Omega\setminus\overline{D}) \rightarrow W^{1-1/p,p}(\partial\Omega\cup\partial D)$
is bounded and has a bounded right inverse then
\begin{equation}\label{biriyani}
 \|\mathcal{S}_kf\|_{W^{1,p}(\Omega\setminus\overline{D})}
 \leq C [\|\mathcal{S}_kf\|_{W^{1-1/p,p}(\partial D)} + \|\mathcal{S}_kf\|_{W^{1-1/p,p}(\partial\Omega)}].
\end{equation}
Note that, $\mathcal{S}_k : W^{-1/p,p}(\partial D) \rightarrow W^{1,p}(D)$ is bounded, see Property \ref{pr5} of Theorem \ref{from Mitrea} or [\cite{Mitrea}, Lemma 3.1], and also
the trace map $\gamma : W^{1,p}(D) \rightarrow W^{1-1/p,p}(\partial D)$ is bounded. Therefore
\begin{align}
 \|\mathcal{S}_kf\|_{W^{1-1/p,p}(\partial D)}
 & \leq C \|\mathcal{S}_kf\|_{W^{1,p}(D)} \nonumber \\
 & \leq C \|f\|_{W^{-1/p,p}(\partial D)} \label{mutton}
\end{align}
On the other hand
\begin{align*}
 \|\mathcal{S}_kf\|_{W^{1-1/p,p}(\partial\Omega)}^{p}
 & = \int_{\partial\Omega}|\mathcal{S}_kf(x)|^pds(x) + \int_{\partial\Omega}\int_{\partial\Omega}\frac{|\mathcal{S}_kf(x)-\mathcal{S}_kf(y)|^p}{|x-y|^{2+p(1-1/p)}}ds(x)ds(y) \\
 & =: I + II.
\end{align*}
 Since, $c_1<|x-y|<c_2$ for $x\in\partial\Omega$ and $y\in\partial D$, therefore
\begin{align*}
 |I|
  \leq \int_{\partial\Omega}\left(\int_{\partial D}\frac{1}{|x-y|}|f(y)|ds(y)\right)^pds(x) 
 & \leq \frac{1}{c_1}\int_{\partial\Omega}\left(\int_{\partial D}|f(y)|ds(y)\right)^pds(x) \\
 & \leq C \|f\|_{W^{-1/p,p}(\partial D)}^{p}.
\end{align*}
Using the Taylor expansion of $e^{ikt}, t\in\mathbb{R},$ we can show that
for $x,y\in\partial\Omega$ and $z\in\partial D$ where $D\subset\Omega$, we have
\begin{equation}\label{fUUUnda}
 |\frac{e^{ik|x-z|}}{|x-z|}-\frac{e^{ik|y-z|}}{|y-z|}|\leq C|x-y|.
\end{equation}
Note that
\begin{align*}
 |\mathcal{S}_kf(x)-\mathcal{S}_kf(y)|
 & = |\int_{\partial D}[\Phi_k(x,z)-\Phi_k(y,z)]f(z)ds(z)|\\
& = |\int_{\partial D}[\frac{1}{4\pi}\left(\frac{e^{ik|x-z|}}{|x-z|}-\frac{e^{ik|y-z|}}{|y-z|}\right)]f(z)ds(z)|\\
& (\text{using}\ \eqref{fUUUnda})  \\
& \leq C |x-y|\int_{\partial D}|f(z)|ds(z) \\
& \leq C |x-y|\|f\|_{W^{-1/p,p}(\partial D)}. 
\end{align*}
Therefore
\begin{align*}
 |II|
 & \leq C \|f\|_{W^{-1/p,p}(\partial D)}^{p}\int_{\partial\Omega}\int_{\partial\Omega}\frac{|x-y|^p}{|x-y|^{2+p(1-1/p)}}ds(x)ds(y) \\
 & \leq C  \|f\|_{W^{-1/p,p}(\partial D)}^{p}\int_{\partial\Omega}\int_{\partial\Omega}\frac{1}{|x-y|}ds(x)ds(y) \\
 & \leq C  \|f\|_{W^{-1/p,p}(\partial D)}^{p}.
\end{align*}
Hence 
\begin{equation}\label{german}
 \|\mathcal{S}_kf\|_{W^{1-1/p}(\partial\Omega)} \leq C \|f\|_{W^{-1/p,p}(\partial D)}.
\end{equation}
Combining \eqref{biriyani}, \eqref{mutton} and \eqref{german}, we obtain
\[
 \|\mathcal{S}_kf\|_{W^{1,p}(\Omega\setminus\overline{D})} \leq C \|f\|_{W^{-1/p,p}(\partial D)}.
\]
 \end{section}

\end{section}

\end{document}